\documentclass[reqno]{article}

\usepackage[T1]{fontenc}
\usepackage[utf8]{inputenc}
\usepackage[english]{babel}

\usepackage{amsmath}
\usepackage{geometry}
\usepackage{color}
\usepackage{xcolor}
\usepackage{enumitem}
\usepackage[%
	linktoc=all,
]{hyperref}

\usepackage{graphicx}
\usepackage{wrapfig}
\usepackage{caption}
\usepackage{float}

\usepackage{mathtools}
\usepackage{amssymb}
\usepackage{wasysym}
\usepackage{textcomp}
\usepackage{stmaryrd}
\usepackage{esint}

\usepackage{amsthm}
\usepackage{thmtools}

\usepackage{xspace}

\colorlet{myred}{red!50!black}
\colorlet{mylightblue}{blue!50!black}
\colorlet{mydarkblue}{blue!80!black}
\colorlet{mygreen}{green!50!black}

\hypersetup{%
	colorlinks = true,
	linkcolor = myred,
	citecolor = mylightblue,
	urlcolor = mydarkblue
}

\setlist[itemize,1]{label=\ensuremath{\diamond}} 
\setlist[enumerate,1]{label=(\textit{\roman*})}

\newcommand{\guillemet}[1]{``#1''}

\newcommand*{\N}{\mathbb{N}}

\newcommand*{\R}{\mathbb{R}}
\newcommand*{\C}{\mathbb{C}}
\renewcommand*{\d}{\mathrm{d}}
\newcommand*{\id}[1][{}]{1_{#1}}
\newcommand*{\zero}[1][{}]{0_{#1}}
\newcommand*{\deq}{\overset{\scriptscriptstyle{\mathrm{def}}}{=}}
\newcommand*{\lra}{\longrightarrow}

\newcommand*{\ri}{\ensuremath\mathrm{i}}
\newcommand*{\re}{\ensuremath\mathrm{e}}

\newcommand*{\Norm}[1]{\left\lVert#1\right\rVert}
\newcommand*{\norm}[1]{\lVert#1\rVert}
\newcommand*{\Absolute}[1]{\left\lvert#1\right\rvert}
\newcommand*{\absolute}[1]{\lvert#1\rvert}

\newcommand*{\scalp}[1]{\langle #1\rangle}

\newcommand*{\jump}[1]{[#1]}

\newcommand*{\transpose}[1]{#1^\mathrm{T}}
\newcommand*{\Cal}[1]{\mathcal{#1}}

\DeclareMathOperator*{\real}{\mathrm{Re}}
\DeclareMathOperator*{\imag}{\mathrm{Im}}
\DeclareMathOperator*{\range}{range}
\DeclareMathOperator*{\Span}{Span}

\DeclareMathOperator*{\trace}{Tr}

\DeclareMathOperator*{\Bigo}{\mathcal{O}}
\newcommand*{\bigo}{\mathcal{O}}

\newcommand*{\Set}[1]{\left\lbrace #1 \right\rbrace}
\newcommand*{\buc}[1]{\ensuremath{{BUC}^{#1}}}

\newcommand*{\spectrum}[1][]{\varSigma_\text{#1}}

\newcommand*{\uu}{{\ensuremath{\underline{u}}}}
\newcommand*{\dual}[1]{\tilde{#1}}

\newcounter{decomposition}
\newcommand*{\nombre}[1]{%
	\setcounter{decomposition}{#1}%
	\textit{\Roman{decomposition}}%
}
\newcommand{\ki}{k^{\nombre{1}}}
\newcommand{\Ki}{K^{\nombre{1}}}
\newcommand{\Kii}{K^{\nombre{2}}}
\newcommand{\Kiii}{K^{\nombre{3}}}
\newcommand{\Kiv}{K^{\nombre{4}}}
\newcommand{\gi}{g^{\nombre{1}}}
\newcommand{\Gi}{G^{\nombre{1}}}
\newcommand{\Gii}{G^{\nombre{2}}}
\newcommand{\Giii}{G^{\nombre{3}}}
\newcommand{\Giv}{G^{\nombre{4}}}
\newcommand{\si}{s^{\nombre{1}}}
\newcommand{\Si}{S^{\nombre{1}}}
\newcommand{\Sii}{S^{\nombre{2}}}
\newcommand{\Siii}{S^{\nombre{3}}}
\newcommand{\Siv}{S^{\nombre{4}}}
\newcommand{\vp}{v^+_{1}}
\newcommand{\dvp}{\tilde{v}^+_{1}}
\newcommand{\pip}{\pi^+_{1}}

\newcommand{\cA}{\mathcal{A}}
\newcommand{\cC}{\mathcal{C}}
\newcommand{\cL}{\mathcal{L}}
\newcommand{\cN}{\mathcal{N}}
\newcommand{\cO}{\mathcal{O}}
\newcommand{\cT}{\mathcal{T}}

\newcommand{\nup}[1]{\nu_{#1}^+}
\newcommand{\num}[1]{\nu_{#1}^-}
\newcommand{\nupm}[1]{\nu_{#1}^\pm}
\newcommand{\phip}[1]{\phi_{#1}^+}
\newcommand{\phim}[1]{\phi_{#1}^-}
\newcommand{\phipm}[1]{\phi_{#1}^\pm}
\newcommand{\psip}[1]{\psi_{#1}^+}

\newcommand{\psipm}[1]{\psi_{#1}^\pm}
\newcommand{\Phip}[1]{\varPhi_{#1}^+}
\newcommand{\Phim}[1]{\varPhi_{#1}^-}
\newcommand{\Phipm}[1]{\varPhi_{#1}^\pm}
\newcommand{\Psip}[1]{\varPsi_{#1}^+}
\newcommand{\Psim}[1]{\varPsi_{#1}^-}
\newcommand{\Psipm}[1]{\varPsi_{#1}^\pm}
\newcommand{\dphip}[1]{\dual{\phi}_{#1}^+}
\newcommand{\dphim}[1]{\dual{\phi}_{#1}^-}
\newcommand{\dphipm}[1]{\dual{\phi}_{#1}^\pm}

\newcommand{\dpsipm}[1]{\dual{\psi}_{#1}^\pm}
\newcommand{\dPhip}[1]{\dual{\varPhi}_{#1}^+}
\newcommand{\dPhim}[1]{\dual{\varPhi}_{#1}^-}
\newcommand{\dPhipm}[1]{\dual{\varPhi}_{#1}^\pm}
\newcommand{\dPsip}[1]{\dual{\varPsi}_{#1}^+}
\newcommand{\dPsim}[1]{\dual{\varPsi}_{#1}^-}
\newcommand{\dPsipm}[1]{\dual{\varPsi}_{#1}^\pm}

\newcommand{\Fp}[1]{F_{#1}^+}
\newcommand{\Fm}[1]{F_{#1}^-}

\newcommand*{\ind}[1]{\mathbf{1}_{#1}}

\newcommand{\K}{\mathcal{K}}
\newcommand{\Dirac}{\boldsymbol{\delta}}
\newcommand{\ac}{\mathrm{ac}}
\newcommand{\at}{\mathrm{at}}

\newtheorem{theorem}{Theorem}[section]

\newtheorem{proposition}[theorem]{Proposition}
\newtheorem{lemma}[theorem]{Lemma}

\theoremstyle{definition}
\newtheorem{definition}[theorem]{Definition}
\newtheorem{assumption}{Assumption}

\theoremstyle{remark}
\newtheorem{remark}[theorem]{Remark}

\title{Orbital stability of monostable waves \\for reaction-diffusion systems}
\author{L. Garénaux}
\date{\today}

\begin{document}

\maketitle

\begin{abstract}
We study stability of monostable waves for reaction-diffusion systems. When the solution is initially close to a fast wave profile in optimal topology, we prove convergence to a shifted profile. The proof relies on explicit resolvent kernels estimates, allowing to handle weakly localized perturbations. It allows phase shift construction even when the translational eigenvalue is not associated to a zero of the Evans function. 

We further discuss distinction between Evans and Fourier eigenmodes when the marginal group velocity are directed towards the wave interface.\\[0.5em]
\textbf{MSC2020:} 35B35, 35B40, 35C07, 35K40, 35K57, 35P15 \\[0.5em]
\textbf{Keywords:} Orbital stability, Monostable waves, Reaction-diffusion systems, Fourier modes, Resolvent kernel estimates, Inverse Laplace transform
\end{abstract}

\section{Introduction}

\subsection{Litterature}
In the present work, we are interested in one dimensional parabolic systems
\begin{equation}
\label{e:main}
u_t = d u_{xx} + \left(f(u)\right)_x + g(u),
\hspace{4em}
x \in \R,
\end{equation}
with unknown $u(t,x) \in \C^n$, a diagonal matrix $d\in \C^{n\times n}$ whose real part is positive definite, and flux and growth functions $f, g \in \cC^3(\C^n, \C^n)$.

This class of equation is known to admit a wide variety of pattern solutions: Constants, space-periodic solutions, homoclinic and heteroclinic profiles. Our interest lies in long time stability of some of these specific solutions.

\paragraph{Orbital stability}

For an hyperbolic space-constant equilibrium $\uu$, the dynamic is governed at leading order by the ordinary differential equation $u'(t) = g(u)$ and asymptotic stability is equivalent to the  spectral condition that the Jacobian matrix $J_g(\uu)$ is definite negative.
When the equilibrium $\uu$ is non-constant, the situation becomes more subtle. Since \eqref{e:main} is invariant under the transformation $u(t, \cdot) \mapsto u(t, \cdot + x_0)$, stability is only expected up to a spatial shift. 
From a technical point of view, taking the limit $x_0 \to 0$ guarantees that $0$ belongs to the linearized dynamic spectrum, with associated eigenmode $\uu_x$. Such an element of the spectrum is an obstruction to temporal decay of the linear dynamic.
A suitable way to handle this element is to absorb its contribution by modulating parameters. The wave $\uu$ is part of a manifold of equilibria described by physical parameters such as position, speed or wave number. By making them time and/or space dependent, the parameter dynamics can be used to match the non-decaying component corresponding to the $0$ eigenvalue. Depending on the background wave properties, this is done in different ways. 

\paragraph{Known phase shift constructions}
For homoclinic and heteroclinic connections to linearly stable constant states (often referred to as pulses and bistable fronts), $0$ is an isolated eigenvalue, with space-localized eigenmode. The standard spectral projections onto $\Span(\uu_x)$ allows to decouple the dynamic into two components. The first is one dimensional and corresponds to position deformation of the wave, the second is infinite dimensional, has spectral gap, and account for shape deformation of the wave. We refer to \cite{Sattinger-76} for a stability proof in this context.

In the case $g = 0$ of diffusive conservation law systems, homoclinic and heteroclinic connections to linearly neutral constant states can be found. Despite the fact that $\uu_x$ is still a space-localized eigenmode, spectral curves meet the imaginary axis at $0\in \C$, preventing the direct use of finite-dimensional projections. Separation of these two type of spectrum was possible through construction of an Evans function in \cite{Howard-Zumbrun-98}. Its zeros correspond to resolvent kernel poles, making possible the definition of projection through complex integration. Main difference with the previous case is that the shape dynamic has no spectral gap anymore. Nonlinear stability is obtained using polynomial weights, improving temporal decay.

For periodic wave solutions, the spectrum is essential, and a contact between the imaginary axis and a continuous curve occurs at $0$, similarly to the previous setting. However, the translation eigenmode $\uu_x$ is not space-localized, preventing any finite-dimensional spectral projections. In this setting, $0$ is one of the approximated eigenvalues of the essential curve, and can not be separated from nearby elements of the spectrum. A low-high frequency cut-off that relies on Floquet-Bloch multipliers is key to identify a shape deformation dynamic with enhanced algebraic decay and a phase deformation dynamic with a good non-linear structure. The use of polynomial weights is not possible here, and a sharp analysis is necessary to prove non-linear stability \cite{Johnson-Noble-Rodrigues-Zumbrun-13a}.

\paragraph{Unknown phase shift constructions}
In addition to the previously discussed waves, there are two other patterns worth mentioning. 
The first is an heteroclinic connection between two space-periodic patterns. As of today, both existence and stability are open questions, and we will not dwell on that topic. The recent study \cite{Avery-Carter-Rijk-Scheel-25} makes a first step in this direction.

The second is the family of heteroclinic waves between a linearly stable and linearly unstable constants, often referred to as monostable waves. Up to now, stability was only proven in weighted spaces that remove the zero eigenvalue \cite{Sattinger-76,Gallay-94,Faye-Holzer-18,Avery-Scheel-21}: Distance to the wave profile is measured with weighted norms that make the translational eigenmode $\uu_x/\varOmega$ unbounded as $x\to +\infty$. As a result, the class of initial perturbation is too small to allow for any phase dynamics, since an arbitrary small translation is infinitely far away from the background wave. 

Stability of monostable waves in optimal weighted topology, that is when $\uu_x/\varOmega$ is bounded, was recently obtained in \cite{Garenaux-Rodrigues-25} for scalar equations without diffusion. In this setting, a continuous region of essential spectrum touches the imaginary axis at $0$. Every point of this region is of multiplicity one, and the family of eigenmode is point-wise smooth with respect to the temporal spectral parameter.\footnote{Let $\phi_1^+(\lambda, x)$ be the family of eigenmode. For each $x\in \R$ and $\lambda_0$ in the spectrum, $\lambda \mapsto \phi_1^+(\lambda, x)$ is smooth from a neighborhood of $\lambda_0$.} In the interior of this region eigenmodes are space-localized, while at the border they have Fourier behavior $\re^{\ri \xi x}$ when $x \to +\infty$. Similarly to the periodic waves setting, the translational eigenmode $\uu_x / \varOmega$ can not be separated from nearby elements. 
Using transport from the unstable endstate to the wave, it was possible to convert spatial localization into temporal decay. This procedure was handled with arbitrary weak localization, and almost no temporal rate loss.

The goal of the present article is to extend the previous result to the diffusive system case \eqref{e:main}, see Theorem \ref{t:main}. Following the resolvent kernel approach from \cite{Howard-Zumbrun-98}, we identify a transported Gaussian kernel originating from low frequencies. It allows to recover temporal decay from arbitrarily weak spatial localization as highlighted in \cite{Garenaux-Rodrigues-25}, and thus to retain phase dynamics.

At the end of the article, we discuss in Theorem \ref{t:Fourier-Evans} the difference between Fourier and Evans eigenmode, as well as a criteria to separate them.

\subsection{Main statement}

We first assume that \eqref{e:main} admits a monostable wave connecting two constants states.

\begin{assumption}
\label{a:equilibrium}
(\textit{Background wave})
There exists $U^-, U^+ \in \C^n$ such that:
\begin{itemize}
\item \textit{Constant equilibria:} $g(U^\pm) = 0$, 
\item \textit{Stable and unstable equilibria:} The jacobian matrix $J_g(U^-)$ is definite negative, while the spectrum $\sigma(J_g(U^+))$ intersects $\Set{\lambda \in \C: \real{\lambda} > 0}$.
\item \textit{Traveling wave connection:} There exists a profile $\uu: \R \to \C^n$ and a speed $c>0$ such that 
\begin{equation}
\label{e:profile-eq}
- c \uu' = d \uu'' + (f(\uu))' + g(\uu)
\end{equation}
and $\lim_{\pm \infty} \uu = U^\pm$.
\item \textit{Generic localization:} The solution $\uu$ does not lie in the strong stable manifold of the equilibrium $U^+$ for the ordinary differential equation \eqref{e:profile-eq}.
\end{itemize}
\end{assumption}

Stability of the wave will be stated in term of a linear operator $\cA: \buc{2}(\R, \C^n) \to \buc{0}(\R, \C^n)$ 
\begin{equation*}
\cA = d \partial_{xx} + (J_f(\uu) + c)\partial_x + J_g(\uu) + (J_f(\uu))_x, 
\end{equation*}
obtained from linearization of \eqref{e:main} at $\uu$. The wave stability can be decomposed into three regions: The two infinities where the solution is close to constants $U^-$ and $U^+$, and the inner transition region. 
\begin{definition}[Dispersion relations]
\label{d:dispersion-relation}
For all $\nu \in \C$, let
\begin{equation*}
\cA^\pm[\nu] \deq d \nu^2 + (J_f(U^\pm) + c) \nu + J_g(U^\pm)
\end{equation*}
be the symbols associated to the asymptotic operators ($x\to \pm\infty$). For all $\lambda \in \C$, we will write
\begin{equation*}
d^\pm(\lambda, \nu) \deq \det\left(\lambda - \cA^\pm[\nu]\right), 
\end{equation*}
and will further refer to the equations $d^\pm(\lambda, \nu) = 0$ as the dispersion relations.
\end{definition}

From standard constant coefficient ordinary differential equations theory \cite{Kapitula-Promislow-13}, the $\buc{0}(\R, \C^n)$ spectra of $\cA^\pm$ are composed of $n$ parameterized curves (counting multiplicity)
\begin{equation*}
\sigma(\cA^\pm) = \Set{\lambda \in \C : \text{There exists } \xi \in \R \text{ such that } d^\pm(\lambda, \ri\xi) = 0}.
\end{equation*}
Some of these curves intersect the unstable half plane $\Set{\lambda \in \C: \real{\lambda} > 0}$, reflecting $U^+$ instability. It carries over to $\cA$ spectrum, and distance to the wave must therefore be measured in weighted space. We assume the existence of a scalar weight that stabilizes $\cA$ and retains the phase dynamic. 

For a $\kappa > 0$, let $\cL^+ : \buc{2}(\R) \to \buc{0}(\R)$ be the constant coefficient differential operator defined by
\begin{equation*}
\cL^+ v = \re^{\kappa x} \cA^+ (\re^{-\kappa x} v),
\end{equation*}
and let $\cL^- = \cA^-$. Again their respective spectra are composed of $n$ curves that we denote 
\begin{equation*}
\xi \mapsto \lambda_k^\pm(\xi)
\hspace{4em}
\xi \in \R, 
\hspace{2em}
1 \leq k \leq n.
\end{equation*}
For completeness, let us note that the $\lambda^+$ curves are the solutions to 
\begin{equation*}
d^+(\lambda^+, \ri \xi - \kappa) = \det(\lambda - \cL^+[\ri \xi]) = 0,
\hspace{4em} \xi \in \R.
\end{equation*}

\begin{assumption}
\label{a:essential-spectrum}
(\textit{Essential spectrum})
There exists $\kappa > 0$ and $\xi_0 \in \R$ such that the following holds. There exists positive constants $\varXi$, $\eta$, $\alpha_1$ and a complex $\alpha_2$ with $\real{\alpha_2} > 0$ such that:
\begin{itemize}
\item \textit{Single marginal point:} When $\absolute{\xi - \xi_0} \leq \varXi$, 
\begin{equation}
\label{e:marginal-lambda-expansion}
\lambda_1^+(\xi) = \ri \alpha_1 (\xi - \xi_0) - \alpha_2 (\xi - \xi_0)^2 + \cO_{\xi \to \xi_0}((\xi - \xi_0)^3).
\end{equation}
\item \textit{Spectral gap 1:} When $\absolute{\xi - \xi_0} \geq \varXi$,
\begin{equation*}
\real{\lambda_1^+(\xi)} \leq - \eta.
\end{equation*}
\item \textit{Spectral gap 2:} For all $\xi \in \R$ and all $1 \leq k \leq n$
\begin{align*}
& \real{\lambda_k^-(\xi)} \leq - \eta, \\
k\neq 1 \ \implies \ & \real{\lambda_k^+(\xi)} \leq - \eta.
\end{align*}
\item \textit{Optimal weight:} $x\mapsto \re^{\kappa x} \uu'(x) \in L^\infty([0,+\infty))$.
\end{itemize}
\end{assumption}

To each solution $(\lambda, \nu)$ of the dispersion relation $d^+ = 0$ is associated a projection onto $\ker \lambda - \cL^+[\nu]$ and parallel to $\range \lambda - \cL^+[\nu]$. We denote $\pi$ the projection at $(\lambda, \nu) = (0, \ri \xi_0)$.

In addition to essential spectrum, $\sigma(\cA)$ also contains point spectrum, which corresponds to zeros of the Evans function. We will follow a standard argument to construct this holomorphic function in section \ref{ss:ODE-solution} (see page \pageref{proof:spatial-eigenvalues-2}), and for now assume that the point spectrum is stable. Given $(\kappa, \xi_0)$ satisfying Assumption \ref{a:essential-spectrum}, we need a weight with exponential behavior at $\pm \infty$. 

\begin{assumption}
\label{a:point-spectrum}
(\textit{Point spectrum})
There exists $\theta>0$ and $\varOmega \in \buc{2}(\R, \C)$ such that 
\begin{itemize}
\item \textit{Exponential behavior:} For all $x\in \R$, $\absolute{\varOmega(x)} \in (0,1]$, and 
\begin{equation*}
\varOmega(x) = 
\begin{cases}
\re^{(-\kappa + \ri \xi_0) x} & x \geq 1, \\
1 & x \leq -1.
\end{cases}
\end{equation*}
\item \textit{Stable point spectrum:} Let $\cL: \buc{2}(\R, \C^n) \to \buc{0}(\R, \C^n)$ be the differential operator 
\begin{equation*}
\cL v \deq \frac{1}{\varOmega} \cA (\varOmega v),
\end{equation*}
then the Evans function associated to $\cL$ does not vanish on
\begin{equation*}
\label{e:def-Lambda-theta}
\varLambda = \Set{\lambda \in \C : \real{\lambda} \geq - \theta (1 + \absolute{\imag{\lambda}})}.
\end{equation*} 
\item \textit{Spatial Jordan block} The set 
\begin{equation}
\label{e:non-Jordan-set}
\Set{\lambda \in \varLambda : \text{ the matrices } A^\pm(\lambda) \text{defined in \eqref{e:1st-order-pb} are diagonalizable}}
\end{equation}
has no accumulation point, and does not contains $0$. \\ 
Remark that $\lambda$ belongs to \eqref{e:non-Jordan-set} if and only if every solution $(\nu, v_1, v_2) \in \C \times \C^n \times C^n$ to 
\begin{equation*}
\begin{cases}
(\lambda - \cL^\pm[\nu]) v_1 = 0, \\
(\lambda - \cL^\pm[\nu]) v_2 = -\partial_\nu \cL^\pm[\nu] v_1.
\end{cases}
\end{equation*}
satisfies $v_1 = v_2 = 0$.
\end{itemize}
\end{assumption}

To describe phase dynamics, we will need two additional definitions. To measure spatial localization, we will compose $L^\infty$ norms with a weight family. We need it to be sharp enough that it captures arbitrarily weak spatial localization. 
\begin{definition}
\label{d:sub-exp}
Let $\rho : \R \to [1,+\infty)$ be non-decreasing, and let $\eta>0$, $M\geq 1$. We say that $\rho$ is a sub-exponential weight with constants $(\eta, M)$ if for all $x\geq 0$
\begin{equation*}
\rho(x) \leq M \re^{\eta x},
\end{equation*}
and 
\begin{equation*}
\int_0^x \re^{-\eta (x - y)} \frac{1}{\rho(y)} \d y \leq \frac{M}{\rho(x)}.
\end{equation*}
\end{definition}
We provide in Lemma \ref{l:exemple-sub-exp-weights} examples of such weights. At last, we remark that Assumption \ref{a:essential-spectrum} guarantees the kernel of the matrix $\lambda - \cL[\ri \xi]$ is one dimensional at $(\lambda, \xi) = (0, \xi_0)$. We denote $\pi$ the spectral projection onto this kernel. We are now ready to state our main result.

\begin{theorem}
\label{t:main}
Assume that Assumptions \ref{a:equilibrium}, \ref{a:essential-spectrum} and \ref{a:point-spectrum} hold, and let $M\geq 1$. There exists positive constants $C$ and $\delta$ such that the following three points hold.
\begin{enumerate}
\item \label{i:th-1} If $u_0 \in \buc{3}(\R)$ satisfies 
\begin{equation*}
E_0 \deq \Norm{\frac{u_0 - \uu}{\varOmega}}_{W^{2,\infty}(\R)} \leq \delta
\end{equation*}
then there exists a unique solution 
\begin{equation*}
u \in \cC^0([0,+\infty), \buc{3}(\R)) \ \cap \ \cC^1((0,+\infty), \buc{1}(\R))
\end{equation*}
to \eqref{e:main} with initial data $u(0) = u_0$. It satisfies
\begin{equation*}
\label{e:th-no-decay}
\Norm{\frac{u(t, \cdot + ct) - \uu}{\varOmega}}_{W^{1,\infty}(\R)} \leq C E_0.
\end{equation*}
\item \label{i:th-2} If in addition $\pi \frac{u_0 - \uu}{\varOmega}$ converges to a finite limit at $x\to +\infty$, then there exists $x_\infty \in \R$ such that  $\absolute{x_\infty} \leq C E_0$ and 
\begin{equation*}
\label{e:th-rateless-decay}
\Norm{\frac{u(t, \cdot + ct + x_\infty) - \uu}{\varOmega}}_{W^{1,\infty}(\R)} \underset{t\to +\infty}{\longrightarrow} 0
\end{equation*}
when $t\to +\infty$.
\item \label{i:th-3} If in addition to the previous points, there exists a sub-exponential weight $\rho$ with constants $(\delta, M)$ satisfying $\rho(x) \leq \sqrt{1 + x}$ and such that
\begin{equation*}
E_{0, \rho} \deq \Norm{\pi \frac{u_0(\cdot + x_\infty) - \uu}{\varOmega} \rho}_{W^{1,\infty}(\R)}
\end{equation*}
is finite, then for all $t\geq 0$
\begin{equation*}
\label{e:th-rate-decay}
\Norm{\frac{u(t, \cdot + ct + x_\infty) - \uu}{\varOmega}}_{W^{1,\infty}(\R)} \leq C \frac{E_{0,\rho}}{\rho(C t)}.
\end{equation*}
\end{enumerate}
\end{theorem}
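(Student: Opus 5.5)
We work in the moving frame $\tilde x = x - ct$. We write $u(t,\cdot+ct+\psi(t,\cdot)) = \uu + \varOmega v$, with an unknown phase $\psi(t,x)$ that is localized in time. We use the weighted linear operator $\cL = \varOmega^{-1}\cA\varOmega$. Our plan follows the resolvent kernel approach of \cite{Howard-Zumbrun-98}, combined with the transport mechanism of \cite{Garenaux-Rodrigues-25}.

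\textbf{Linear estimates.} First we construct the resolvent kernel $G_\lambda(x,y)$ of $\lambda-\cL$ on $\varLambda$ and beyond. We write it with the first order formulation \eqref{e:1st-order-pb}, using decaying and growing solutions at $\pm\infty$ and the Evans function. Assumption \ref{a:point-spectrum} makes the Evans function nonvanishing, and Jordan blocks are excluded near $0$. Near $\lambda=0$, the only non-gapped spatial eigenvalue is $\nu^+_1(\lambda)$. By \eqref{e:marginal-lambda-expansion}, it is obtained by inverting $\lambda=\lambda_1^+(\xi)$:
\[
\nu_1^+(\lambda)-\ri\xi_0 = -\lambda/(\ri\alpha_1)+\cO(\lambda^2).
\]
We then recover the Green function $G(t,x,y)$ by inverse Laplace transform. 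The contour lies in $\varLambda$ far from $0$. Near $0$ we deform it along the curve where $\nu_1^+$ is purely imaginary, as in the method of steepest descent.

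This gives a decomposition $G=\Gi+\Gii+\Giii+\Giv$:
\begin{itemize}
\item $\Gi$ is a Gaussian transported at speed $-\alpha_1$ (towards the interface), of the form $\pi\, (4\pi \alpha_2 t)^{-1/2}\exp(-(x-y+\alpha_1 t)^2/(4\alpha_2 t))$ for $x,y\ge 0$;
\item $\Gii$ is the excited translational term $\uu'(x)/\varOmega(x)\,\dual{e}(t,y)$, where $\dual e$ is an error function in $y-\alpha_1 t$;
\item $\Giii$ and $\Giv$ are exponentially decaying remainders coming from the spectral gaps.
\end{itemize}
The key linear bounds are as follows.
\begin{itemize}
\item All terms are bounded uniformly in time in $W^{1,\infty}$, which gives point \ref{i:th-1} at the linear level.
\item Integration of $\Gi$ against data that are $\rho$-localized in $y$ decays like $1/\rho(Ct)$. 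Since the mass sits near $y\approx\alpha_1 t$, we split $y\lessgtr \alpha_1 t/2$. Definition \ref{d:sub-exp} controls the exponential tails, and $\rho\le\sqrt{1+x}$ ensures that the Gaussian width $\sqrt t$ does not destroy the rate.
\item Because the data are transported to the interface, $\Gii$ converges to $\uu'/\varOmega$ times the limit of $\pi v_0$ at $+\infty$, with a rate $1/\rho(Ct)$.
\end{itemize}

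\textbf{Nonlinear argument.} Next we set the phase as $\psi(t) = -\int \dual e(t,y)\,(v_0(y) + \text{Duhamel terms})\,\d y$, so that the $\Gii$ contribution is absorbed. The remaining perturbation then satisfies
\[
v(t)=\int (G-\Gii)\,v_0 + \int_0^t\!\!\int (G-\Gii)(t-s)\, \cN(v,\psi_x,\psi_t)\,\d s.
\]
The nonlinearity $\cN$ is at least quadratic and carries a factor $\varOmega$. This extra factor $\varOmega$ gives exponential spatial localization $\re^{-\kappa x}$ for $x>0$, which strongly compensates for the weak decay. We close a fixed-point argument in norms of the form $\sup_t\|v\|_{W^{1,\infty}}+\sup_t \rho(Ct)\,\|v\|$ with small $\delta$. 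The $W^{2,\infty}$ hypothesis on the data and parabolic smoothing handle the loss of derivatives. Local well-posedness and uniqueness follow from standard semigroup theory. Point \ref{i:th-2} then follows by density: we approximate the data by $\rho$-localized data, apply the rate estimate, and take $x_\infty=\lim\psi(t)$, which exists since $\psi_t$ is integrable thanks to the localized nonlinear sources. Point \ref{i:th-3} is the quantitative version of this argument.

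\textbf{Main obstacle.} The main obstacle is the sharp low-frequency kernel analysis. We must show that the non-Gaussian pieces near $\lambda=0$, which come from the curvature of $\nu_1^+$ and the $\lambda$-dependence of the eigenvectors and the Evans function, contribute only terms with the same transported structure. In particular, they must lose no more than a constant in the rate $1/\rho(Ct)$ for arbitrarily weak $\rho$. Our first approach would be to expand the kernel around the saddle point and bound the error terms by $t^{-1/2}$ times the Gaussian, which we expect to be absorbed since $\rho\le\sqrt{1+x}$.
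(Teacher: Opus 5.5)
Your nonlinear step has a genuine gap. You modulate with a dynamic phase while keeping the weight attached to the modulated coordinate, $u(t,x+ct+\psi)=\uu(x)+\varOmega(x)v(t,x)$. Differentiating in time then produces two terms. The first is $\psi_t\,\uu'/\varOmega$, which you intend to absorb the translational part. The second is the quadratic term
\begin{equation*}
\frac{\psi_t}{\varOmega}\,\partial_x(\varOmega v)=\psi_t\Big(v_x+\frac{\varOmega'}{\varOmega}\,v\Big),
\end{equation*}
together with analogous $\psi_x$, $\psi_{xx}$ terms, since your $\psi$ depends on $x$. In these terms the factor $\varOmega$ has cancelled. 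Since $\varOmega'/\varOmega\to-\kappa+\ri\xi_0$, they are not derivatives either. So your claim that $\cN(v,\psi_x,\psi_t)$ carries a factor $\varOmega$ is false. The mechanism you use to close (fast decay of $\re^{t\cL}$ on sources localized like $\re^{-\kappa x}$) therefore does not apply to them.

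You are left with controlling $\int_0^t \absolute{\psi_t(s)}\,\norm{v(s)}_{W^{1,\infty}}\,\d s$. The phase is a local average of $\pi v_0$ near $y\approx\sigma s$, so $\psi_t$ is at best of size $s^{-1/2}$ times the local oscillation of $\pi v_0$ there. It is not integrable in general: not under (i), and not even under (ii), contrary to what you assert. Meanwhile $\norm{v(s)}$ decays only like $1/\rho(\sigma s)$ for arbitrarily weak $\rho$, and not at all in (i). So this integral is neither bounded nor $\cO(1/\rho(\sigma t))$, and the fixed point does not close. The paper flags exactly this term as the obstruction to a modulated weighted ansatz.

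The missing idea is that no dynamic phase is needed. Here $E(0)\neq 0$, so the resolvent kernel has no pole at $\lambda=0$, and $0$ is a Fourier mode, not an Evans mode. The translational mode is therefore excited only by the value of $\pi v_0$ at $+\infty$, transported to the interface; exponentially localized sources never excite it.

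Two related corrections to your kernel analysis:
\begin{itemize}
\item Your error-function $\Gii$ is the Howard--Zumbrun pole structure, which is absent here. The coupling to $\uu'/\varOmega$ through $\psi_1^+(\lambda,x)\tilde\phi_1^+(\lambda,y)$ has a unit-mass Gaussian profile in $y-\sigma t$. An error-function kernel would not even be integrable against bounded $v_0$.
\item The expansion should read $\nu_1^+(\lambda)-\ri\xi_0=\lambda/\alpha_1+\cO(\lambda^2)$, with real positive derivative.
\end{itemize}

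What the paper does instead:
\begin{itemize}
\item It fixes a constant shift $x_\infty$ at $t=0$, as the small root of $\lim_{x\to+\infty}\pi\big(u_0(x)-\uu(x-x_\infty)\big)/\varOmega(x)=0$, using Lemma~\ref{l:front}.
\item With a constant shift, $\absolute{\cN}\leq C\absolute{\varOmega}\,\norm{v}_{W^{1,\infty}}^2$ genuinely holds. The Duhamel term is handled by Proposition~\ref{p:improved-decay}, the linear term by Proposition~\ref{p:temp-decay}, and a scalar bootstrap gives (i) and (iii).
\item Point (ii) is not obtained by density, which would need a uniform-in-time Lipschitz dependence on the data that you do not establish. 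It follows from (iii) with a sub-exponential weight adapted to $v_0$: $\rho=2^{j+1}$ on $(x_j,x_{j+1}]$, where $\absolute{\pi v_0}\leq E_0\,2^{-j-1}$ beyond $x_{j+1}$.
\end{itemize}
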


\begin{lemma}
\label{l:exemple-sub-exp-weights}
For all $\eta>0$ and $M\geq 1$, there exists $a_0$ such that for all $a \in (0, a_0)$, the maps $x\mapsto \max(1, x)^a$, $x\mapsto \ln\big(\re + \max(0, x)\big)^a$ and $x\mapsto 1$ are sub-exponential weights with constants $(\eta, M)$.
\end{lemma}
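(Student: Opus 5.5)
The plan is a direct verification of the two conditions of Definition \ref{d:sub-exp} for each of the three maps. All three are clearly $\ge 1$ and non-decreasing for $a>0$, so the work is in the growth bound and the integral bound. The proof runs from the constant weight, to the power weight, to the logarithmic weight. The bound for the constant weight serves as the model, and the other two are reduced to it with an error that vanishes as $a\to 0$. That error is what fixes $a_0$.

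\emph{Constant weight.} The growth bound is $1\le M\re^{\eta x}$. The integral is exact:
\begin{equation*}
\int_0^x \re^{-\eta(x-y)}\,\d y=\frac{1-\re^{-\eta x}}{\eta}.
\end{equation*}
The requirement is therefore that $(1-\re^{-\eta x})/\eta\le M$ for all $x\ge 0$, i.e.\ $1\le \eta M$. This is an exact computation: the constant weight meets Definition \ref{d:sub-exp} precisely when $\eta M\ge 1$. For $\eta M<1$ (for instance $M=1$, $\eta=1/2$) the integral tends to $1/\eta>M$. So the lemma cannot hold literally for every $\eta>0$ and $M\ge 1$. I will prove it in the regime $\eta M\ge 1$ for the constant weight, and $\eta M>1$ for the other two. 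This is presumably the intended reading, since in Theorem \ref{t:main} the constant $M$ is fixed before $\delta$ is chosen.

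\emph{Power weight $\rho_a(x)=\max(1,x)^a$.} For the growth bound, $\sup_{x\ge1}x^a\re^{-\eta x}=(a/(\eta\re))^a$, and this tends to $1$ as $a\to0$. So it is at most $M$ for small $a$; when $M=1$ it is $\le1$ as soon as $a\le \eta\re$. For the integral, multiply by $\rho_a(x)$ and split at $y=x/2$.
\begin{itemize}
\item On $[x/2,x]$ we have $\rho_a(x)/\rho_a(y)\le 2^a$, which contributes at most $2^a/\eta$.
\item On $[0,x/2]$ we have $\rho_a(x)\le \max(1,x)^a$, giving at most $\max(1,x)^a\re^{-\eta x/2}x/2$. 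This is uniformly $O(1)$, but it is not small.
\end{itemize}
The crude split therefore gives $2^a/\eta+O(1)$, which is not enough. Instead I will write
\begin{equation*}
\rho_a(x)\int_0^x\frac{\re^{-\eta(x-y)}}{\rho_a(y)}\,\d y-\frac{1-\re^{-\eta x}}{\eta}=\int_0^x \re^{-\eta(x-y)}\Big(\frac{\rho_a(x)}{\rho_a(y)}-1\Big)\d y.
\end{equation*}
I will then show that this difference tends to $0$ uniformly in $x\ge0$ as $a\to0$. Fix $R$ large. On $x-y\le R$ the ratio is at most $(1+R)^a$ for $y\ge1$, and the piece with $y<1$ carries the factor $\re^{-\eta(x-1)}\max(1,x)^a$. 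On $x-y>R$ the factor $\re^{-\eta(x-y)}$ is bounded by $\re^{-\eta R/2}\re^{-\eta(x-y)/2}$, while the ratio is at most $x^a$. Take $\sup_x x^a \re^{-\eta x/4}$ of order $1$, then choose $R$ large, then $a$ small. The left side is then at most $1/\eta+\varepsilon\le M$ once $\eta M>1$.

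\emph{Logarithmic weight.} The same argument works with $\rho(x)=\ln(\re+\max(0,x))^a$, which is even slower. Its growth is dominated by that of the power weight, and the ratio $\rho(x)/\rho(y)$ tends to $1$ uniformly on $x-y\le R$.

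The main obstacle is this uniform-in-$x$ control of the difference, in particular obtaining the sharp leading constant $1/\eta$ and not the crude $O(1)$. The remedy is the near/far split at distance $R$ described above, with $R$ chosen before $a$.
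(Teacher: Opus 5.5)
Your correction of the statement is right. For $\rho\equiv 1$ the integral condition reads $(1-\re^{-\eta x})/\eta\le M$ for all $x\ge0$, which holds exactly when $\eta M\ge 1$, so the lemma fails as written when $\eta M<1$. The paper's proof misses this because its preliminary remark is off by a constant: integrating \eqref{e:sub-exp-alternative-estimate} against $\re^{-\eta(x-y)}$ gives $M/\big((\eta-\tilde\eta)\rho(x)\big)$, not $M/\rho(x)$. So the paper really proves the lemma with $M$ replaced by $M/(\eta-\tilde\eta)$. This is harmless downstream, where $M$ is a generic constant. Aiming for the sharp leading constant $1/\eta$, as you do, is what the corrected statement with $\eta M>1$ needs.

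The gap is in your far region $x-y>R$. There you bound $\rho_a(x)/\rho_a(y)$ by $x^a$ and keep $\re^{-\eta R/2}\re^{-\eta(x-y)/2}$. This leaves a term of size $\frac{2}{\eta}x^a\re^{-\eta R}$, which for fixed $R$ and $a$ is unbounded in $x$. Appealing to $\sup_x x^a\re^{-\eta x/4}$ does not help. The far region contains $y$ just below $x-R$, where $x-y$ is not comparable to $x$, so no factor $\re^{-\eta x/4}$ is available. As written, the near/far split therefore does not give the uniformity in $x$ that you correctly identify as the crux.

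The repair is to bound the ratio by a function of $x-y$ alone. For $0\le y\le x$ one has $\rho_a(x)/\rho_a(y)\le(1+x-y)^a$:
\begin{itemize}
\item if $y\ge1$, then $x/y=1+(x-y)/y\le1+x-y$;
\item if $y<1$, then $\rho_a(y)=1$ and $\max(1,x)\le 1+x-y$.
\end{itemize}
For the logarithmic weight, the identity $\ln(\re+x)=\ln(\re+y)+\ln\big(1+\tfrac{x-y}{\re+y}\big)$ together with $\ln(\re+y)\ge1$ gives the same bound. Then, without any splitting,
\begin{equation*}
\rho(x)\int_0^x\frac{\re^{-\eta(x-y)}}{\rho(y)}\,\d y\le\int_0^{\infty}\re^{-\eta s}(1+s)^a\,\d s\xrightarrow[a\to0]{}\frac{1}{\eta},
\end{equation*}
monotonically in $a$, by dominated convergence. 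Hence $a_0$ exists whenever $\eta M>1$. The growth condition follows from $\rho(x)\le(1+x)^a\le\re^{ax}\le M\re^{\eta x}$ for $a\le\eta$.

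This is the paper's ratio criterion with the sharper bound $(1+x-y)^a$ in place of $M\re^{\tilde\eta(x-y)}$. That sharper bound is exactly what keeps the constant at $1/\eta+o(1)$.
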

\begin{proof}
We begin with the following remark. Let $\tilde{\eta}>0$ and $\eta> \tilde{\eta}$. If a non-decreasing $\rho : \R \to [1,+\infty)$ satisfies $\rho(0) = 1$ and for all $0 \leq y \leq x$
\begin{equation}
\label{e:sub-exp-alternative-estimate}
\frac{\rho(x)}{\rho(y)} \leq M \re^{\tilde{\eta}(x - y)},
\end{equation}
then $\rho$ is a sub-exponential weight with constants $(\eta, M)$. 
Therefore, it is sufficient to prove that the three mentioned examples satisfy \eqref{e:sub-exp-alternative-estimate}. To do so, one may remark that there exists an $a$-dependent $x_0>0$ such that the map $R(x) = \rho(x) \re^{-\tilde{\eta} x}$ is decreasing on $[x_0, +\infty)$, providing \eqref{e:sub-exp-alternative-estimate} when $y \in [x_0, x]$ with $M = 1$. When $y \in [0, x_0]$, it is direct to get the successive estimates 
\begin{equation*}
R(x) \leq R(x_0) \leq \frac{R(x_0)}{\inf_{[0, x_0]} R} R(y) \leq \rho(x_0) R(y).
\end{equation*}
Computation of $\rho(x_0)$ provides the constant $a_0$.
\end{proof}

\begin{remark}
Assumption \ref{a:essential-spectrum} implies that $\uu'$ does not contribute to a zero of the Evans function, and that the later does not vanish at the origin.

If item \textit{Generic localization} of Assumption \ref{a:equilibrium} does not holds, then the profile derivative contribute to a zero of the Evans function, and it is possible to separate this Evans mode from the rest of the spectrum with a strong enough weight. Such a setting is closer to that of bistable waves or of diffusive conservation laws, for which it is known how to isolate phase dynamic. The \textit{Optimal weight} item of Assumption \ref{a:essential-spectrum} is not sharp in that case, since the weighted eigenmode converges to $0$ at $+\infty$.
\end{remark}

\subsection{Discussion}

\begin{remark}
To prove orbital stability of travelling waves when the essential spectrum touches the imaginary axis, \cite{Johnson-Noble-Rodrigues-Zumbrun-13a} used a different nonlinear scheme. With a similar decomposition of the resolvent kernel, they decouple the nonlinear dynamics into a phase and a shape contribution. One of the interesting feature of this approach was that it allowed to describe the evolution of parameters through the reduction to a Whitham system \cite{Johnson-Noble-Rodrigues-Zumbrun-13b}. This approach would be interesting to treat the case of slowest reaction-diffusion monostable fronts. Unfortunately, it does not seem to apply in the current setting of weighted dynamics. Indeed, conjugation by exponential weight usually destroys the derivative structure of some nonlinearities, weakening their temporal decay and preventing to close the nonlinear argument. In particular, a weighted correction ansatz of the form
\begin{equation*}
u(t, x) = \uu(x - ct - \psi(t)) + \varOmega(x - ct - \psi(t)) v(t, x - ct - \psi(t))
\end{equation*}
creates a term 
\begin{equation*}
\frac{1}{\varOmega} \, \partial_t \psi \, \partial_x (\varOmega v)
\end{equation*}
that can not be written as a full derivative. The adapted ansatz
\begin{equation*}
u(t, x) = \uu(x - ct - \psi(t)) + \varOmega(x - ct) v(t, x - ct)
\end{equation*}
would solve this issue, but leads to technical difficulties, since one would need to prove resolvent kernel estimates for time-dependent linear operators.
\end{remark}

\subsection{Future directions}

\paragraph{Matrix valued weights}
We restrict here to scalar valued weight $\varOmega$, which modify all essential curves $\lambda_k^\pm$ at once. It would be interesting to investigate whether matrix valued weights allow to stabilize only some of these curves.

\paragraph{Logarithmic shifts}
An important part of the litterature deals with close to constants initial data. If the solution at initial time is constant outside a compact interval, stability results to the critical wave, with a logarithmic shift are known, see the recent \cite{Avery-Scheel-22} and references therein. This class of result is slightly distinct from what we are presenting here, since they handle initial data which is not close from the wave profile: in optimal weighted topology, a constant by part function is $\mathcal{O}(1)$-close to $\uu$.

\paragraph{Critical speed}
The eigenvalue expansion from assumption \ref{a:essential-spectrum} guarantees non-zero convection for the less stable mode. It reflects the fact that the considered wave is supercritical, meaning that its speed is strictly above the minimal one. For critical waves, advection vanishes at the unstable infinity, and the approach we present here fails. 
The linear dynamic in optimal weighted spaces can be depicted by the scalar toy model 
\begin{equation*}
u_t = \frac{1}{x} u_x,
\hspace{4em}
x \geq 1,
\end{equation*}
whose solution is given through characteristics method by
\begin{equation*}
u(t,x) = u(0, \sqrt{2t + x^2}),
\end{equation*}
leading to 
\begin{equation*}
\norm{u(t,\cdot)}_{L^\infty([1,+\infty))} \leq \frac{1}{\rho(\sqrt{2t})} \norm{\rho u(0, \cdot)}_{L^\infty([1,+\infty))}.
\end{equation*}
However, it is not clear how to recover similar transport behavior in the diffusive setting.

\paragraph{Orbital stability with respect to speed}
In known examples \cite{Kolmogorov-Petrovsky-Piskunov-37,vandenBerg-Hulshof-Vandervorst-01,Avery-Garenaux-23}, families of monostable waves are parameterized by phase shift and speed. Orbital stability with respect to speed is known for Korteweg-de-Vries solitons \cite{Pego-Weinstein-94}, not for monostable waves. A major obstacle to solve this problem is the fact that waves with different speeds come with different optimal topologies (that is, the localization rate $\kappa$ in Assumption \ref{a:essential-spectrum} is speed dependent). As a consequence, the difference between two nearby waves may be exponentially unbounded in optimal topology.

\section{Notations}

During the nonlinear study, we will use the notation $T_{2, g}(a, h)$ to stand for the second order Taylor expansion of a map $g:\C^n \to \C^n$ at $a \in \C^n$ in the direction $h \in \C^n$:
\begin{equation}
\label{e:Taylor-expansion}
T_{2, g}(a, h) = g(a + h) - g(a) - J_{g}(a) h.
\end{equation}

We will write $\sqrt{\cdot}$ to stand for the complex square root with branch cut at $(-\infty, 0)$. For all $\lambda \in \C\backslash (-\infty, 0)$, $\sqrt{\lambda} \in \Set{\real{\lambda} > 0}$.

\begin{lemma}
\label{l:front}
Let $\kappa>0$ and $\xi_0\in \R$ from Assumption \ref{a:essential-spectrum}. There exists $V\in \C^{2n \times 1}$, $\varepsilon>0$ and $C>0$ such that for every $x\geq 0$
\begin{align*}
\Absolute{\begin{pmatrix}
\uu - U_+ \\ 
\uu'
\end{pmatrix}
 - 
V \re^{(-\kappa + \ri \xi_0) x}}
\leq C \re^{-(\kappa + \varepsilon) x}.
\end{align*}
\end{lemma}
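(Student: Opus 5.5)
The plan is to read the profile equation \eqref{e:profile-eq} near $x=+\infty$ as a first order system and to run a variation of constants argument around the hyperbolic-type splitting of its linearization at $U^+$. Set $W = (\uu - U^+, \uu')^{\mathrm T}$. Since $\real d$ is positive definite, $d$ is invertible, and \eqref{e:profile-eq} becomes
\begin{equation*}
W' = A W + N(W).
\end{equation*}
Here $A$ is the $2n\times 2n$ companion matrix built from $J_g(U^+)$ and $J_f(U^+)+c$. The remainder $N$ collects the Taylor remainders $T_{2,g}$ and $T_{2,f}$ (and the term $(J_f(\uu)-J_f(U^+))\uu'$), so that $\absolute{N(W)} \leq C\absolute{W}^2$ for small $W$. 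The eigenvalues $\nu$ of $A$ are exactly the roots of $d^+(0,\nu)=0$, counted with multiplicity.

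\emph{Step 1: spectral identification on the line $\real\nu=-\kappa$.} A root $\nu=-\kappa+\ri\xi$ of $d^+(0,\cdot)$ means that $0\in\sigma(\cL^+[\ri\xi])$, that is, $\lambda_k^+(\xi)=0$ for some $k$.
\begin{itemize}
\item Spectral gap 2 excludes $k\neq1$.
\item Spectral gap 1 excludes $\absolute{\xi-\xi_0}\geq\varXi$.
\item The expansion \eqref{e:marginal-lambda-expansion} with $\alpha_1\neq0$ shows that $\lambda_1^+$ vanishes only at $\xi_0$ in $\absolute{\xi-\xi_0}<\varXi$, possibly after shrinking $\varXi$.
\end{itemize}
Next I show that $\nu_0=-\kappa+\ri\xi_0$ is a simple root of $d^+(0,\cdot)$. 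The eigenvalue $\lambda_1^+(\xi_0)$ is simple for $\cL^+[\ri\xi_0]$ by gap 2. Hence near $(0,\nu_0)$ we can write $d^+(\lambda,\nu)=(\lambda-\lambda_1(\nu))h(\lambda,\nu)$ with $h\neq0$, and $\partial_\nu\lambda_1\neq 0$ there because $\alpha_1\neq0$. Thus $\nu_0$ is a simple eigenvalue of $A$. By finiteness of the spectrum, there is $\varepsilon\in(0,\kappa/2)$ such that no other eigenvalue of $A$ lies in the strip $-\kappa-3\varepsilon\leq\real\nu\leq-\kappa+3\varepsilon$. I expect this bookkeeping between the temporal curves $\lambda_k^+$ and the spatial roots to be the main obstacle, particularly the simplicity claim. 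If the factorization argument is delicate, I would fall back on showing directly that $\partial_\nu d^+(0,\nu_0)\neq0$ via Jacobi's formula and the projection $\pi$.

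\emph{Step 2: a priori decay.} By the \emph{Optimal weight} item, $\absolute{\uu'(x)}\leq Ce^{-\kappa x}$ on $[0,\infty)$. Integrating from $+\infty$ gives $\absolute{\uu-U^+}\leq Ce^{-\kappa x}$. Hence $\absolute{W}\leq Ce^{-\kappa x}$ and $\absolute{N(W)}\leq Ce^{-2\kappa x}$.

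\emph{Step 3: variation of constants in three spectral pieces.} Let $P_s$, $P_0$, $P_u$ be the spectral projections of $A$ onto the eigenvalues with $\real\nu<-\kappa-3\varepsilon$, onto $\nu_0$, and onto $\real\nu>-\kappa+3\varepsilon$ respectively.
\begin{itemize}
\item For $P_u W$: since $P_uW=O(e^{-\kappa x})$ while $e^{Ax}P_u$ grows relative to $e^{-(\kappa-3\varepsilon)x}$, no homogeneous term survives. Therefore
\begin{equation*}
P_uW(x)=-\int_x^\infty e^{A(x-y)}P_uN(y)\,\d y=O(e^{-2\kappa x}).
\end{equation*}
\item For $P_0W$: write
\begin{equation*}
P_0W(x)=e^{\nu_0x}\Big(P_0W(0)+\int_0^\infty e^{-\nu_0y}P_0N\,\d y\Big)-\int_x^\infty e^{\nu_0(x-y)}P_0N\,\d y .
\end{equation*}
The integral converges because $2\kappa>\kappa$, and the last term is $O(e^{-2\kappa x})$. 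This defines $V$ as the bracket, which lies in the range of $P_0$.
\item For $P_sW$: we have $P_sW(x)=e^{Ax}P_sW(0)+\int_0^x e^{A(x-y)}P_sN\,\d y$. This is $O(e^{-(\kappa+2\varepsilon)x})$, absorbing any polynomial factors from Jordan blocks into the margin $\varepsilon$, and using $2\kappa>\kappa+2\varepsilon$.
\end{itemize}
Summing the three pieces gives $\absolute{W-Ve^{(-\kappa+\ri\xi_0)x}}\leq Ce^{-(\kappa+\varepsilon)x}$ for $x\geq0$, which is the claim. The \emph{Generic localization} item of Assumption \ref{a:equilibrium} is not needed for the estimate itself. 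It only guarantees $V\neq0$, since $V=0$ would place $\uu$ in the strong stable manifold of $U^+$.
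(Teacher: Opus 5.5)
Your proof is correct, but it is organized differently from the paper's.

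\emph{The paper's route.} It appeals to a general leading-order asymptotics for the decaying orbit, $W(x)\approx P(x)\re^{\tau x}$ with $\tau$ an eigenvalue of $A$ and $P$ polynomial, and then squeezes $\tau$:
the \emph{Optimal weight} item gives $\real\tau\le-\kappa$;
\emph{Generic localization} gives $\real\tau\ge-\kappa$;
the spectral gaps pin down $\tau=-\kappa+\ri\xi_0$;
the optimal weight is used once more to make $P$ constant.

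\emph{Your route.} You use the optimal weight up front as the a priori bound $\absolute{W}\le C\re^{-\kappa x}$. This turns $N(W)$ into a known forcing of size $\re^{-2\kappa x}$. Since the profile is already given, one explicit variation-of-constants pass with the splitting $P_s+P_0+P_u$ around the isolated line $\real\nu=-\kappa$ yields the expansion. No fixed point and no general asymptotics theorem are needed.

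\emph{Price and payoff.} The price is that you must check by hand that $\nu_0$ is the only eigenvalue of $A$ on that line and that it is simple. Your factorization $d^+(\lambda,\nu)=(\lambda-\lambda_1(\nu))h(\lambda,\nu)$ with $\partial_\nu\lambda_1(\nu_0)=\alpha_1\neq0$ does this correctly. Simplicity is more than you need: a Jordan block at $\nu_0$ would be excluded by your a priori bound anyway, which is how the paper argues. The payoff is clarity about the hypotheses. Your estimate does not use \emph{Generic localization} at all; that item only guarantees $V\neq0$. This is precisely what is needed afterwards, both for the weight $\absolute{\uu-U_+}$ in the subsequent remark and for the construction of $x_\infty$.

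\emph{A caveat shared with the paper.} After you shrink $\varXi$ to some $\varXi'$, zeros of $\lambda_1^+$ on $\varXi'\le\absolute{\xi-\xi_0}<\varXi$ are excluded neither by \emph{Spectral gap 1} nor by the local expansion. So both arguments implicitly read \emph{Single marginal point} as saying that $\lambda_1^+$ vanishes only at $\xi_0$.
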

\begin{proof}
Rewriting the profile equation as a first order ordinary differential equation, and linearizing near $U_\pm$, we obtain for the unknown $\textbf{u} = \transpose{\begin{pmatrix}
\uu - U_+ & \uu'
\end{pmatrix}}$
\begin{equation*}
\textbf{u} = A \textbf{u} + N(\textbf{u}).
\end{equation*}
with linearized matrix 
\begin{equation*}
A = \begin{pmatrix}
0 & \id[n\times n] \\
d^{-1} g'(U_\pm) & d^{-1} f'(U_\pm)
\end{pmatrix}
\end{equation*} and nonlinear terms $N(\textbf{u}) = \mathcal{O}(\textbf{u}^2)$. A fixed point argument on the associated Duhamel formulation ensures the existence of a vector $V(x)$ with polynomial $x$-dependence and an eigenvalue $\tau \in \C$ to $A$ such that 
\begin{equation*}
\absolute{\mathbf{u}(x) - P(x) \re^{\tau x}} \leq C \re^{(\real{\tau} - \varepsilon) x}
\end{equation*}
when $x\to +\infty$. Using Assumption \ref{a:essential-spectrum} item \textit{optimal weight}, we conclude that $\real{\tau} \leq -\kappa$. 
Assumption \ref{a:essential-spectrum} item \textit{single marginal point} ensures that $-\kappa + \ri \xi_0$ is also an eigenvalue for $A$ and Assumption \ref{a:equilibrium} item \textit{Generic localization} guarantees $\real{\tau} \geq -\kappa$. 

As a consequence, $\real{\tau} = -\kappa$ and from Assumption \ref{a:essential-spectrum} item \textit{spectral gap} we deduce that $\tau = -\kappa + \ri \xi_0$. Applying again Assumption \ref{a:essential-spectrum} item \textit{optimal weight} we deduce that $V$ is constant. The proof is complete.
\end{proof}

\begin{remark}
An immediate consequence of the previous Lemma is that $\Omega = \absolute{\uu - U_+}$ satisfies Assumption \ref{a:point-spectrum}, item \textit{Exponential behavior}. It is necessary and sufficient to check Assumption \ref{a:point-spectrum} for this choice of $\Omega$, because the spectrum of $\cL$ is unchanged if one replaces $\Omega$ with an equivalent weight
\begin{equation*}
\frac{1}{C} \varOmega \leq \tilde{\varOmega} \leq C \varOmega.
\end{equation*}

Another consequence is that $\cA$ and $\cL$ coefficients exponentially converge to their limits. In the following we write 
\begin{equation*}
\cL(x) v \deq \ell_2 v_{xx} + \ell_1(x) v_x + \ell_0 v.
\end{equation*}
\end{remark}

\section{Resolvent estimates}
To estimate the resolvent, we will use a kernel formulation. To do so, we need to solve the resolvent problem
\begin{equation}
\label{e:res-pb}
\lambda \phi - \cL(x) \phi = \delta_y,
\hspace{4em}
\phi : \R\backslash\Set{y} \lra \C^{n\times 1},
\end{equation}
together with the transpose resolvent problem
\begin{equation}
\label{e:trans-res-pb}
\lambda \dual{\phi} - \transpose{\cL}(y) \dual{\phi} = \delta_x,
\hspace{4em}
\dual{\phi} : \R\backslash\Set{x} \lra \C^{1\times n},
\end{equation}
where $\delta$ is the Dirac delta distribution on $\R$, and the transpose operator is defined by 
\begin{equation*}
\label{e:def-transpose-A}
\transpose{\cL} \dual{\phi} (y) \deq \partial_{yy}(\dual{\phi} \, \ell_2) - \partial_y (\dual{\phi} \, \ell_1(y)) + \dual{\phi} \, \ell_0(y).
\end{equation*}
In section \ref{ss:ODE-solution}, we introduce some notations and solves the ODE associated with \eqref{e:res-pb} and \eqref{e:trans-res-pb}. In section \ref{ss:construction-resolvent-kernel} we then use these solutions to construct the resolvent kernel and decompose it in a suitable way. In section \ref{ss:bound-resolvent-kernel} we estimate its different parts. We advise to start with the reading of section \ref{ss:construction-resolvent-kernel}.

\subsection{Solutions of the eigenproblems}
\label{ss:ODE-solution}
We start by solving the eigenvalue problem associated with \eqref{e:res-pb}:
\begin{equation}
\label{e:pvp}
\lambda \phi - \cL(x) \phi = 0.
\end{equation}
It will be convenient to recast this system of second order ODE to a system of first order ODE. Let $\Phi$ be the element of $\C^{2n \times 1}$ obtained by concatenation of the vectors $\phi$ and $\partial_x \phi$. It satisfies
\begin{equation}
\label{e:1st-order-pb}
\partial_x \Phi(\lambda, x) = A(\lambda, x) \, \Phi(\lambda, x),
\hspace{4em}
A(\lambda, x) \deq 
\begin{pmatrix}
\zero[n\times n] & \id[n\times n]\\
\ell_2^{-1}(\lambda - \ell_0(x)) & - \ell_2^{-1} \ell_1(x)
\end{pmatrix}.
\end{equation}
Here and in the following, $\id[n\times n]$ stands for the $n\times n$ identity matrix.
We proceed similarly for the transpose eigenvalue problem:
$\dual{\Phi} \deq \begin{pmatrix} \dual{\phi} & \partial_y \dual{\phi}\end{pmatrix}$ satisfies
\begin{equation}
\label{e:1st-order-trans-pb}
\partial_x \dual{\Phi}(\lambda, y) = \dual{\Phi}(\lambda, y) \, B(\lambda, y),
\hspace{4em}
B(\lambda, y) \deq 
\begin{pmatrix}
\zero[n\times n] & (\lambda - \ell_0(y)+\ell_1'(y)) \, \ell_2^{-1}\\
\id[n\times n] & \ell_1(y) \, \ell_2^{-1}
\end{pmatrix}.
\end{equation}
In this section, we first define and estimate spatial eigenvalues. Then, we construct holomorphic bases of solutions for the non-autonomous ODE \eqref{e:1st-order-pb} and \eqref{e:1st-order-trans-pb}, in a dual way.

\subsubsection{Spatial eigenvalues}
As we will see in the following, solutions to \eqref{e:1st-order-pb} are well approximated when $x\to +\infty$ by solutions of the asymptotic problem
\begin{equation}
\label{e:asymptotic-1st-order-pb}
\partial_x \Phi = A^+(\lambda) \Phi, 
\end{equation}
where $A^+(\lambda) \deq \lim_{x\to +\infty} A(\lambda, x)$. Because \eqref{e:asymptotic-1st-order-pb} is a linear ODE with constant coefficients, its solution are the $x\mapsto \re^{A^+(\lambda) x} V$, for any given $V\in \C^{n\times 1}$. In particular, their behavior is directly linked to the eigenvalues of $A^+(\lambda)$. In the following, we will refer to them as the spatial eigenvalues, and label them 
\begin{equation*}
\nup{-n}(\lambda), \dots, \nup{-1}(\lambda), \nup{1}(\lambda) \dots, \nup{n}(\lambda).
\end{equation*}
Using the companion structure of $A^+(\lambda)$, it is not difficult to see that $(\lambda, \nu^+(\lambda))$ solves the dispersion relation $d^+(\lambda, \nu) = 0$, see Definition \ref{d:dispersion-relation}.
We will use similar notations for solutions of the left asymptotic problem: $A^-(\lambda) \deq \lim_{x\to -\infty} A(\lambda, x)$ has eigenvalues $\num{-n}(\lambda), \dots, \num{-1}(\lambda), \num{1}(\lambda) \dots, \num{n}(\lambda)$. Up to relabeling, we can assume that when $\lambda \in \R$ lies outside of $\spectrum[ess](\cL)$, they are ordered by real part
\begin{equation*}
\real{\nupm{-n}(\lambda)} \leq \dots \leq \real{\nupm{-1}(\lambda)} \leq 0 \leq \real{\nupm{1}(\lambda)} \leq \dots \leq \real{\nupm{n}(\lambda)}.
\end{equation*}
Here and in the following, the symbols $\pm$ and $\mp$ can be replaced in the whole expression, either by $+$ and $-$, or by $-$ and $+$. That is, the above inequality implies no ordering between $\real{\nup{1}(\lambda)}$ and $\real{\num{2}(\lambda)}$. 

This labeling is extended continuously with respect to $\lambda \in \varLambda$. We further group the spatial eigenvalues into three sets:
\begin{equation*}
N_s \deq \Set{\nupm{-n}, \dots, \nupm{-1}}, 
\hspace{4em}
\Set{\nup{1}},
\hspace{4em}
N_u \deq \Set{\num{1}, \nupm{2}, \dots, \nupm{n}}.
\end{equation*}
\begin{lemma}
\label{l:spatial-eigenvalues}
Let $m>0$. Assuming $\theta>0$ is small enough (see Assumption \ref{a:point-spectrum}), there exists $C>0$ such that, with $r:\varLambda \to [1,+\infty)$ defined by
\begin{equation}
\label{e:def-rate}
r(\lambda) \deq C (1 + \absolute{\lambda}^{\frac12}),
\end{equation}
the following two points holds.
\begin{enumerate}[label=(\roman*)]
\item For all $\lambda \in \varLambda$, for all $\nu_s \in N_s$ and $\nu_u \in N_u$:
\begin{equation}
\label{e:spatial-gap}
\real{\nu_s(\lambda)} \leq - r(\lambda), 
\hspace{4em}
r(\lambda) \leq \real{\nu_u(\lambda)}.
\end{equation}
\item \label{i:spatial-gap-2} 
$\nup{1}(0)$ is a simple eigenvalue of $A^+(0)$, and it satisfies
\begin{equation*}
\real{\nup{1}(0)} = 0, 
\hspace{4em}
\partial_\lambda \nup{1}(0) \in (0,+\infty).
\end{equation*}
If $\lambda \in \varLambda \backslash B(0,m)$, then
\begin{equation}
\label{e:spectral-gap-2}
r(\lambda) \leq \real{\nup{1}(\lambda)}.
\end{equation}
\end{enumerate}
\end{lemma}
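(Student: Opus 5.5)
The plan is to reduce everything to the dispersion relations and then argue in three regimes of $\lambda$: large $\absolute{\lambda}$, bounded $\lambda$ away from $0$, and a neighborhood of $0$. By the companion structure of $A^\pm(\lambda)$, $\nu$ is a spatial eigenvalue of $A^+(\lambda)$ if and only if $\det(\lambda - \cL^+[\nu]) = 0$, where $\cL^+[\nu] = \ell_2\nu^2 + \ell_1(+\infty)\nu + \ell_0(+\infty)$ is the symbol of the conjugated operator. The same holds at $-\infty$ with $\cL^-$.

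The key observation is that $\real{\nu} = 0$, say $\nu = \ri\xi$, forces $\lambda = \lambda_k^\pm(\xi)$. By Assumption \ref{a:essential-spectrum}, every such curve except $\lambda_1^+$ satisfies $\real{\lambda} \leq -\eta$. The curve $\lambda_1^+$ does so as well except near $\xi_0$, where it touches $0$ only at $\xi = \xi_0$.

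\textbf{Step 1 (counting).} For $\lambda$ real and large, I would count stable and unstable spatial eigenvalues. The roots behave like $\nu \approx \pm\sqrt{\lambda/d_j}$, giving $n$ roots on each side. This justifies the labeling $N_s$ ($n$ roots), $\Set{\nup{1}} \cup N_u$ ($n$ roots), and the same at $-\infty$.

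\textbf{Step 2 (large $\absolute{\lambda}$).} On $\varLambda$ with $\absolute{\lambda} \geq R$, I would rescale $\nu = \absolute{\lambda}^{1/2}\mu$. The equation becomes $\det(\hat\lambda - \ell_2\mu^2) + \cO(\absolute{\lambda}^{-1/2}) = 0$ with $\hat\lambda = \lambda/\absolute{\lambda}$. For $\theta$ small, $\hat\lambda$ stays in a sector avoiding $(-\infty, 0]$, so $\hat\lambda/d_j$ avoids the negative real axis; the rotation is at most by $\arg d_j$, using $\real{d_j} > 0$. Hence $\real{\sqrt{\hat\lambda/d_j}} \geq c > 0$ uniformly, which gives $\absolute{\real{\nu}} \geq c\absolute{\lambda}^{1/2}$ for all $2n$ roots at both ends. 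Choosing $\theta$ small is needed here so that the sector $\varLambda$ avoids the directions where some $\hat\lambda/d_j$ becomes negative. I expect this step to be the main obstacle: one must check that $\arg \hat\lambda$ ranges over $(-\pi/2-\cO(\theta), \pi/2+\cO(\theta))$ and that $\absolute{\arg d_j} < \pi/2$ leaves enough room for the rotated sector to avoid $(-\infty,0]$.

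\textbf{Step 3 (bounded region).} On the compact set $\varLambda \cap \overline{B(0,R)}$, the roots depend continuously on $\lambda$. No root crosses the imaginary axis unless $\lambda \in \Set{\lambda_k^\pm(\xi)}$. For $\theta < \eta$ small, the set $\varLambda$ meets these curves only at $\lambda = 0$, via $\lambda_1^+(\xi_0)$; this is obtained by shrinking $\theta$ so that $\varLambda$ stays to the right of $\Set{\real{\lambda} \leq -\eta}$ on $B(0,R)$, and near $0$ by using the expansion \eqref{e:marginal-lambda-expansion} with $\real{\alpha_2} > 0$. Hence the signs of the real parts from Step 1 persist on the connected set $\varLambda$. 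By compactness, $\absolute{\real{\nu}} \geq c > 0$ for all roots except $\nup{1}$ near $0$. Combined with Step 2, this gives $\absolute{\real{\nu}} \geq r(\lambda)$ for a suitable constant $C$ in \eqref{e:def-rate}; since $r \geq 1$ is required, I would replace the bound by $\absolute{\real{\nu}} \geq c(1 + \absolute{\lambda}^{1/2})$ and choose $C$ to be that $c$. This yields \eqref{e:spatial-gap}, and also \eqref{e:spectral-gap-2} outside $B(0,m)$.

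\textbf{Step 4 (near $0$).} At $\lambda = 0$, $\nu = \ri\xi_0$ is a root. It is simple because $\partial_\xi \lambda_1^+(\xi_0) = \ri\alpha_1 \neq 0$ together with simplicity of the kernel (Assumption \ref{a:essential-spectrum}), and because Assumption \ref{a:point-spectrum} excludes a Jordan block at $0$. By the implicit function theorem, $\nup{1}(\lambda) = \ri\xi_0 + \lambda/(\ri\alpha_1 \cdot \ri) + \dots$. From $\lambda = \lambda_1^+(\xi)$ with $\nu = \ri\xi$ we get $\partial_\lambda \nu = \ri/\lambda_1^{+\prime}(\xi_0) = \ri/(\ri\alpha_1) = 1/\alpha_1 \in (0, +\infty)$. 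This also confirms that $\nup{1}$ lies in the unstable group for $\lambda > 0$.
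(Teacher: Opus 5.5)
Steps 1, 2 and 4 are sound. Step 4 takes a more direct route than the paper. You put the marginal root in the unstable group using $\partial_\lambda \nu = 1/\alpha_1>0$ and the fact that the Morse index is constantly $n$ on $\{\real{\lambda}>0\}$. The paper instead argues by contradiction, using the bounded solution $\uu'/\varOmega$ and the non-vanishing of the Evans function at $0$. Your route is more elementary and avoids the Evans function, whose definition depends on the very labeling being fixed. Watch a slip there: $\lambda/(\ri\alpha_1\cdot\ri) = -\lambda/\alpha_1$, but the correct first-order term is $\ri\lambda/(\ri\alpha_1)=\lambda/\alpha_1$, as you compute right after. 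Step 2 also handles complex $d_j$ with $\real{d_j}>0$ correctly.

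The gap is in Step 3. The claim that $\varLambda$ meets the curves $\lambda_k^\pm$ only at $\lambda=0$ is false for every $\theta>0$.

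\begin{itemize}
\item Since $\varLambda\supset\{\real{\lambda}\geq-\theta\}$, it contains a whole disc around $0$.
\item By \eqref{e:marginal-lambda-expansion}, $\real{\lambda_1^+(\xi)} = -\real{\alpha_2}(\xi-\xi_0)^2+\cO(|\xi-\xi_0|^3) > -\theta$ as soon as $|\xi-\xi_0|\lesssim\sqrt{\theta}$. The condition $\real{\alpha_2}>0$ only says the curve bends to the left; it does not push it out of $\varLambda$.
\item So $\varLambda$ contains an arc of the essential spectrum through $0$, of extent $\sim\sqrt{\theta}$. It also contains the small piece of $\varSigma(\cL)$ between this arc and the left boundary of $\varLambda$, where $\real{\nup{1}}<0$.
\item Hence ``the signs persist on the connected set $\varLambda$'' fails for $\nup{1}$.
\item Worse, for fixed $\theta$ and $m$ smaller than the extent of that arc, \eqref{e:spectral-gap-2} is actually false on $\varLambda\setminus B(0,m)$: on the arc, $\real{\nup{1}}=0<r$.
\end{itemize}

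This is why the statement lets $\theta$ depend on $m$, and your argument never uses that freedom.

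The repair goes in three steps.
\begin{enumerate}
\item Show that, for $\theta$ small, $\varLambda$ meets the curves only along $\{\lambda_1^+(\xi): |\xi-\xi_0|\leq c\sqrt{\theta}\}$. Use the spectral gaps for bounded $\lambda$ and your sector argument for large $|\lambda|$.
\item Observe that on this arc the only root on $\ri\R$ is the implicit-function continuation of the simple root $\ri\xi_0$. So the roots in $N_s$ and $N_u$ never cross, and \eqref{e:spatial-gap} follows as you say.
\item Shrink $\theta$, depending on $m$, so that $\varSigma(\cL)\cap\varLambda\subset B(0,m/2)$. Then $\varLambda\setminus B(0,m)$ lies in the component of $\varLambda$ minus the arc that contains $(0,+\infty)$, where $\real{\nup{1}}>0$. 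Compactness plus Step 2 then give \eqref{e:spectral-gap-2}.
\end{enumerate}
This last step is exactly how the paper concludes.
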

\begin{proof}[Proof -- part 1]
We start by proving \eqref{e:spectral-gap-2} and \eqref{e:spectral-gap-2} when $\absolute{\lambda} \geq M$ for a large enough $M$. The proof when $\absolute{\lambda} \leq M$ relies on a yet undefined notation, and is done later, see page \pageref{proof:spatial-eigenvalues-2}.

As mentioned above, $\nupm{i}(\lambda)$ solve the dispersion relations
\begin{equation*}
\label{e:dispersion-relation}
\det(\lambda - \cL^\pm[\nu]) = 0,
\end{equation*}
and we now solve these. With the change of variable $z = \nu \absolute{\lambda}^{-\frac12}$, it rewrites
\begin{equation*}
g(z) \deq \det \left(\frac{\lambda}{\absolute{\lambda}} - \frac{1}{\absolute{\lambda}} \cL^\pm\left[z \absolute{\lambda}^{\frac12}\right] \right) = 0.
\end{equation*}
Let us introduce $f(z) \deq \det\left( \frac{\lambda}{\absolute{\lambda}} - \ell_2 z^2\right)$, and remark that the zeros of $f$ are the $\pm \sqrt{\frac{\lambda}{d_j\absolute{\lambda}}}$, $1\leq j \leq n$. Let $\varGamma$ be a closed path surrounding one of these root (with multiplicity eventually larger than $1$) but not the other ones. Then there exists $\varepsilon>0$ such that $\absolute{f(z)} > \varepsilon$ for all $z\in \varGamma$. On the other hand, there exists $C>0$ such that for $z \in \varGamma$
\begin{equation*}
\absolute{f(z) - g(z)} \leq C (\absolute{z} \absolute{\lambda}^{-\frac12} + \absolute{\lambda}^{-1}) \leq \varepsilon,
\end{equation*}
where the last inequality is obtained by taking $\absolute{\lambda}$ large enough (remark that zeros of $f$, and thus $\varGamma$, do not depend on $\absolute{\lambda}$). By applying Rouch{\'e}'s Theorem, we conclude that $f$ and $g$ have the same number of zeros in the interior of $\varGamma$.
Because all $d_j$ are positive, and because $\absolute{\arg\sqrt{\lambda}} < \frac\pi2$ when $\lambda \in \varLambda$, we can choose paths $\varGamma$ avoiding the imaginary axis $\ri \R$. Thus the zeros of $g$ are away from $\ri \R$, uniformly with respect to $\lambda$ outside of a large enough ball. Reverting the change of variable, we get that $\absolute{\real{\nupm{j}(\lambda)}} \geq C \absolute{\lambda}^{\frac12}$ for such values of $\lambda$. The first part of the proof is complete.
\end{proof}

\subsubsection{Holomorphic bases of solutions with asymptotic behavior} Our goal here is to construct solutions to \eqref{e:1st-order-pb} that are holomorphic with respect to $\lambda$. Let us describe the situation in the simpler case \eqref{e:asymptotic-1st-order-pb}.

On the one hand, the asymptotic behavior of solutions to \eqref{e:asymptotic-1st-order-pb} is clear by decomposing $\C^{2n \times 1}$ into generalized eigenspaces of $A^\pm$. In the following, for a map $\Phi:\R \to \C^{2n\times1}$, we denote $\eta^+(\Phi)$ its growth rate at $+\infty$, that is 
\begin{equation*}
\eta^+(\Phi) \deq \inf \Set{\eta\in \R : x\mapsto \re^{-\eta x} \Phi(x) \text{ is bounded over } \R_+},
\end{equation*}
together with a similar notation for the behavior at $-\infty$:
\begin{equation*}
\eta^-(\Phi) \deq \sup \Set{\eta\in \R : x\mapsto \re^{-\eta x} \Phi(x) \text{ is bounded over } \R_-}.
\end{equation*}

On the other hand, a typical issue is that several spatial eigenvalues $\nupm{}(\lambda)$ can collide at a given value  $\lambda_0$ of the parameter, potentially leading to a lack of holomorphicity of eigenvectors with respect to $\lambda$. See \cite[pages 64 and 72]{Kato-95} for simple examples. However, thanks to eigenspaces being invariant through $\re^{A^\pm(\lambda) x}$, holomorphicity of solutions to \eqref{e:asymptotic-1st-order-pb} reduces to holomorphicity of spectral projections of $A^\pm$. The latter is well known by perturbative approach \cite[Chapter two, \S 1]{Kato-95}, since $A^\pm$ are finite dimensional operators.

Although the previous argument breaks down when studying the non-autonomous ODE \eqref{e:1st-order-pb}, its solutions are well approximated by those of \eqref{e:asymptotic-1st-order-pb}, and holomorphicity follows from a fixed point procedure we now recall. We adapt the discussion \cite[section 3]{Howard-Zumbrun-98}, and in particular the Proposition 3.1 there, to be able to handle collision points where Jordan chains may appear.

\begin{lemma}
\label{l:ODE-behavior}
Let $\kappa > 0$ be given by Lemma \ref{l:front}.
Let $\lambda_0 \in \varLambda$, let $\eta \in \R$ and $\varepsilon>0$ small. Suppose that $V_1(\lambda), \dots, V_p(\lambda)$ are linearly independent and holomorphic elements of $\C^{2n\times 1}$, such that $\eta_j^\pm(\lambda) = \eta^\pm(x\mapsto \re^{A^\pm(\lambda) x}V_j(\lambda))$ satisfy
\begin{equation*}
\absolute{\eta_j^\pm(\lambda) - \eta}< \varepsilon,
\end{equation*}
for all $\lambda$ close to $\lambda_0$ and $1\leq j \leq p$.
Then, for all $1\leq j \leq p$, there exists a solution $\Phi(\lambda, x)$ of \eqref{e:1st-order-pb}, satisfying for all $x \in \R_\pm$ and locally around $\lambda_0$
\begin{equation}
\label{e:ODE-behavior}
\Phi(\lambda, x) = \re^{A^\pm(\lambda) x} V_j(\lambda) + \Cal{O}_{\lambda, x}(\re^{(\eta \mp \frac{\kappa}{2}) x}).
\end{equation}
The notation $\Cal{O}_{\lambda, x}(1)$ stands for a function which is holomorphic from a small neighborhood of $\lambda_0$ to $\buc{0}(\R_\pm)$.
\end{lemma}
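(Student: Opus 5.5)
We follow the classical gap-lemma / conjugation argument of \cite[Proposition 3.1]{Howard-Zumbrun-98}, carried out separately on $\R_+$ and $\R_-$ (we describe $\R_+$; $\R_-$ is symmetric). Write $A(\lambda,x) = A^+(\lambda) + \Theta(\lambda,x)$. By Lemma \ref{l:front} and the subsequent remark, the coefficients of $\cL$ converge exponentially to their limits, so $\Theta$ is holomorphic in $\lambda$ and satisfies $\absolute{\Theta(\lambda,x)} \leq C\re^{-\kappa x}$ for $x\geq 0$, uniformly near $\lambda_0$. (If the exponential rate of the coefficients is in fact $\kappa+\varepsilon'$ or only some fixed positive rate, the argument below only needs a rate strictly larger than $\kappa/2 + 2\varepsilon$, which we arrange by taking $\varepsilon$ small.) We seek $\Phi = \re^{A^+ x}V_j + W$, so that the unknown $W$ solves $W' = A^+ W + \Theta(\re^{A^+x}V_j + W)$.

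\emph{Spectral splitting.} Near $\lambda_0$, use Kato's perturbation theory to form holomorphic total spectral projections of $A^+(\lambda)$ onto three groups of spatial eigenvalues: real part $<\eta-2\varepsilon'$ (projection $P_s$), real part within $2\varepsilon'$ of $\eta$ (projection $P_c$), and real part $>\eta+2\varepsilon'$ (projection $P_u$). The groups are well separated near $\lambda_0$ after slightly adjusting the thresholds: there are finitely many eigenvalues, so we can choose cut levels in gaps, with $\varepsilon'$ comparable to $\varepsilon$ but with the gap much smaller than $\kappa/2$. Because of Jordan blocks, polynomial factors appear in the semigroups, and we absorb them into an $\varepsilon$ loss:
\[
\absolute{\re^{A^+x}P_c} \leq C\re^{(\eta+\varepsilon)\absolute{x}},\qquad
\absolute{\re^{A^+x}P_s}\leq C\re^{(\eta-\varepsilon)x},\qquad
\absolute{\re^{A^+x}P_u}\leq C\re^{(\eta+\varepsilon)x}
\]
on the appropriate half-lines. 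Holomorphy of these bounds in $\lambda$ is uniform on a small disk.

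\emph{Fixed point.} Define on $X=\{W : \re^{-(\eta-\kappa/2)x}W \in \buc{0}(\R_+)\}$ the map
\[
\cT W(x) = -\int_x^{\infty} \re^{A^+(x-z)}(P_c+P_u)\Theta(z)\bigl(\re^{A^+z}V_j+W(z)\bigr)\d z + \int_{L}^{x}\re^{A^+(x-z)}P_s\Theta(z)\bigl(\re^{A^+z}V_j+W(z)\bigr)\d z .
\]
Since $\absolute{\re^{A^+z}V_j}\leq C\re^{(\eta+\varepsilon)z}$ and $\Theta$ decays like $\re^{-\kappa z}$, the integrands are of size $\re^{(\eta+\varepsilon-\kappa)z}$ times semigroup factors. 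A direct computation gives $\absolute{\cT W(x)}\lesssim \re^{(\eta+2\varepsilon-\kappa)x}\leq \re^{(\eta-\kappa/2)x}$, provided $2\varepsilon<\kappa/2$. For the first integral this needs $\eta+\varepsilon-\kappa < \eta -\varepsilon$, so the integral from $\infty$ converges; for the second it needs $\eta-\varepsilon < \eta+\varepsilon-\kappa/2$, so the integral from $L$ converges. Taking $L$ large, the contraction constant is $\lesssim \re^{-(\kappa/2-2\varepsilon)L}<1$. The resulting fixed point is holomorphic in $\lambda$, because $\cT$ depends holomorphically on $\lambda$ and contraction preserves holomorphy, uniformly on the small disk. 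This yields \eqref{e:ODE-behavior} on $[L,\infty)$. We then extend to $[0,L]$ by solving the linear ODE, which preserves holomorphy; on this compact interval the bound is trivially absorbed into the constants. On $\R_-$ we do the same construction with the roles of the projections reversed, using the decay of $\Theta$ as $x\to-\infty$. Each of $\R_\pm$ produces its own solution with its own asymptotics, so on the two half-lines the solution $\Phi$ is understood as the corresponding construction, as in the statement.

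\emph{Main obstacle.} The delicate step is the splitting near collision points, where the eigenvectors of $A^+(\lambda)$ fail to be holomorphic. We handle this by working with total projections of the eigenvalue groups rather than with individual eigenvectors; these projections remain holomorphic, and we accept the resulting $\varepsilon$ polynomial loss. One must also check that the central group's rate window, together with that loss, still fits inside the $\kappa/2$ margin; this is ensured by taking $\varepsilon$ small.
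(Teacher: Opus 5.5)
\paragraph{The gap: the splitting threshold.} The fixed-point step fails because of where you place the splitting. You integrate from $L$ only the modes in $P_s$, i.e.\ those with real part below $\eta-2\varepsilon'$, and you want $\cT$ to map into $X$, the functions of size $\cO(\re^{(\eta-\kappa/2)x})$. But the $\int_L^x$ term only decays at the rate of the slowest mode kept in $P_s$. With your own bound on $\re^{A^+s}P_s$ and the forcing $\absolute{\Theta(z)\re^{A^+z}V_j}\lesssim\re^{(\eta+\varepsilon-\kappa)z}$, you get
\[
\int_L^x \re^{(\eta-\varepsilon)(x-z)}\re^{(\eta+\varepsilon-\kappa)z}\,\d z=\re^{(\eta-\varepsilon)x}\int_L^x\re^{(2\varepsilon-\kappa)z}\,\d z\approx\frac{\re^{(2\varepsilon-\kappa)L}}{\kappa-2\varepsilon}\,\re^{(\eta-\varepsilon)x},
\]
which is not $\cO(\re^{(\eta-\kappa/2)x})$.

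This is not just a lossy estimate. Nothing in the hypotheses prevents $A^+(\lambda)$ from having an eigenvalue $\mu$ with $\eta-\kappa/2<\real{\mu}<\eta-2\varepsilon'$. In that case the $\mu$-component of $\cT W$ is $\re^{\mu x}\int_L^x\re^{-\mu z}(\cdots)\,\d z\sim c\,\re^{\mu x}$, with $c$ generically nonzero. The same problem occurs with $W\in X$ as input, so $\cT$ does not map $X$ into itself, and your contraction constant $\re^{-(\kappa/2-2\varepsilon)L}$ is unjustified as well. The symptom is visible in your own conditions: the one you wrote for the second integral, $\eta-\varepsilon<\eta+\varepsilon-\kappa/2$, is equivalent to $\varepsilon>\kappa/4$, which contradicts your requirement $2\varepsilon<\kappa/2$.

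\paragraph{The fix, as in the paper.} The cut must sit below $\eta-\kappa/2$ and above the forcing rate $\eta+\varepsilon-\kappa$, not near $\eta$. The paper takes $\kappa/2+\varepsilon<\kappa_2<\kappa_1<\kappa$ and lets $P(\lambda)$ project onto the spatial eigenvalues with real part $<\eta+\varepsilon-\kappa_2$.
\begin{itemize}
\item Only the $P$-part is integrated from $M$. It contributes $\cO(\re^{(\eta+\varepsilon-\kappa_2)x})$, which is $\cO(\re^{(\eta-\kappa/2)x})$.
\item The whole complementary part $1-P$ is integrated from $+\infty$. This includes your central and unstable groups \emph{and} every eigenvalue with real part in $[\eta+\varepsilon-\kappa_2,\eta-2\varepsilon')$. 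The integral converges because all these real parts exceed $\eta+\varepsilon-\kappa_1>\eta+\varepsilon-\kappa$.
\end{itemize}
This is the assignment behind the bounds \eqref{e:contraction-1}; in the paper's displayed formula for $\cT$, $P$ and $1-P$ appear interchanged. The paper runs the contraction on $F=\re^{-(\eta+\varepsilon)x}\Phi$ in $\buc{0}([M,\infty))$, and no separate central group is needed.

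With this cut, your own formulation in terms of $W\in X$ also closes, with a contraction constant of order $\re^{-\kappa L}$. The rest of your proposal is fine: holomorphic total projections, absorbing Jordan polynomial factors into $\varepsilon$, holomorphy via a uniform contraction, and the extension to $[0,L]$.
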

\begin{proof}
We mostly follow \cite{Howard-Zumbrun-98}. To lighten notations, we only prove the $+$ case. The $-$ case is similar. From the change of variable $\Phi = \re^{(\eta + \varepsilon) x} F$, we see that $F(\lambda, x)$ satisfies
\begin{align}
\nonumber \partial_x F & {} = (A(\lambda, x) - \eta - \varepsilon) F, \\
\nonumber & {} = (A^+(\lambda) - \eta - \varepsilon) F + (A(\lambda, x) - A^+(\lambda)) F, \\
\label{e:fixed-point-EDO} & {} \deq A_{\eta+\varepsilon}(\lambda) F + R(x) F.
\end{align}
Remark that $R(x) \deq A(\lambda, x) - A^+(\lambda, x) = \Cal{O}(\re^{-\kappa x})$ does not depend on $\lambda$, and let $\kappa_1, \kappa_2 \in \R$ such that 
\begin{equation*}
-\kappa < -\kappa_1 < -\kappa_2 < -\frac{\kappa}{2} - \varepsilon.
\end{equation*}
We then let $P(\lambda)$ be the spectral projection corresponding to all eigenvalues $\nu$ of $A^+(\lambda)$ that satisfy
\begin{equation*}
\real{\nu} < \eta + \varepsilon - \kappa_2.
\end{equation*}
Remark that $1-P(\lambda)$ is a spectral projection corresponding to eigenvalues $\nu$ that satisfy
\begin{equation*}
\eta + \varepsilon - \kappa_1 < \eta + \varepsilon - \kappa_2 \leq  \real{\nu}.
\end{equation*}
In particular,
\begin{align*}
\absolute{\re^{A_{\eta+\varepsilon}(\lambda) x} P(\lambda)} \leq C\re^{-\kappa_2 x},
\hspace{4em}
x \geq 0,\\
\absolute{\re^{B(\lambda) x} (1-P(\lambda))} \leq C\re^{-\kappa_1 x},
\hspace{4em}
x \leq 0.
\end{align*}
For a fixed $1 \leq j \leq p$ and $x\geq 0$, we define
\begin{align*}
(\Cal{T} F)(\lambda, x) \deq {} & \re^{A_{\eta+\varepsilon}(\lambda) x} V_j(\lambda) - \int_x^{+\infty} \re^{A_{\eta+\varepsilon}(\lambda)(x-y)} P(\lambda) R(y) F(y) \d y \\
& {} + \int_M^x \re^{A_{\eta+\varepsilon}(\lambda)(x-y)} (1-P(\lambda)) R(y) F(y) \d y.
\end{align*}
Then $\partial_x (\Cal{T} F) = A_{\eta+\varepsilon}(\lambda) \Cal{T} F + R(x) F$, so that a fixed point of $\Cal{T}$ is a solution to \eqref{e:fixed-point-EDO}. Let $F_1, F_2 \in \buc{0}([M, +\infty))$ for a constant $M>0$, then
\begin{align}
\nonumber \absolute{\Cal{T}F_1 - \Cal{T}F_2}(\lambda, x) \leq {}& C \norm{F_1 - F_2}_{L^\infty} \int_x^{+\infty} \re^{-\kappa_1(x-y)} \re^{-\kappa y} \d y \\
\nonumber & {} + C\norm{F_1 - F_2}_{L^\infty} \int_M^x \re^{-\kappa_2(x-y)} \re^{-\kappa y} \d y,\\
\label{e:contraction-1} \leq {}& C \norm{F_1 - F_2}_{L^\infty} \left(\frac{\re^{-\kappa x}}{\kappa - \kappa_1} + \frac{\re^{-\kappa M -\kappa_2(x-M)}}{\kappa - \kappa_2} - \frac{\re^{-\kappa x}}{\kappa - \kappa_2}\right),\\
\label{e:contraction-2} \leq {}& C \norm{F_1 - F_2}_{L^\infty} \, \re^{-\kappa M}.
\end{align}
Due to the choice of $V_j$, $\re^{A_{\eta+\varepsilon}(\lambda) x} V_j(\lambda)$ is bounded on $[M,+\infty)$. Thus, \eqref{e:contraction-2} shows that $\Cal{T}$ is a contraction on $\buc{0}(M,\infty)$, and hence admits a fixed point $F_j(\lambda, x)$, locally holomorphic in  $\lambda$. In particular \eqref{e:contraction-1} ensures that
\begin{equation*}
F_j(\lambda, x) - \re^{A_{\eta+\varepsilon}(\lambda) x} V_j(\lambda) = \Cal{T} F_j(\lambda, x) - \Cal{T}(0)(\lambda, x) = \Cal{O}_{\lambda, x}( \norm{F_j(\lambda)}_{L^\infty} \, \re^{-\kappa_2 x}).
\end{equation*}
Coming back to the original variable $\Phi$, and using that $\eta + \varepsilon - \kappa_2 \leq \eta - \frac{\kappa}2$, we obtain the claimed bound on $\Phi(\lambda, x)- \re^{A^+(\lambda) x} V_j(\lambda)$.
\end{proof}

We now apply Lemma \ref{l:ODE-behavior} locally to all $\lambda_0 \in \varLambda$, as follow. 
For spatial eigenvalues $\nupm{j_0}$ with constant multiplicity, we can chose $\varepsilon$ so small that no other eigenvalue belongs to the ball $B(\nupm{j_0}(\lambda_0), \varepsilon)$. The associated generalized eigenvectors $(V_1(\lambda), \dots, V_p(\lambda))$ are then holomorphic.

At special points $\lambda_0$ where several spatial eigenvalues of $A^\pm$ collide, we let $\nu_0\in \C$ and $\varepsilon>0$ such that $B(\nu_0, \varepsilon)$ contains exactly the group of $p$ eigenvalues colliding, and such that the ring $\Set{\nu \in \C: \varepsilon \leq \absolute{\nu - \nu_0} \leq 2\varepsilon}$ contains no spatial eigenvalues. From \cite[page 68]{Kato-95}, the associated spectral projection is holomorphic, and one can construct a basis $(V_1(\lambda), \dots, V_p(\lambda))$ for its range, composed of holomorphic vectors (not necessarily eigenvectors though), e.g. by extracting suitable columns from the matrix representation of the projection.

We thus obtain two temporary family
\begin{align*}
(F_{-n}^-, \dots, F_{-1}^-, F_{1}^-, \dots, F_{n}^-),\\
(F_{-n}^+, \dots, F_{-1}^+, F_{1}^+, \dots, F_{n}^+).
\end{align*}
whose elements are solutions of \eqref{e:1st-order-pb}, holomorphic with respect to $\lambda \in \varLambda$. Furthermore the asymptotic behavior of $F_j^\pm$ at $\pm\infty$ is $\eta^\pm(F_j^\pm) = \nupm{j}$. It is direct to check that each family is in fact a basis of solutions, as  
\begin{align*}
\det(\Fp{-n}, \dots, \Fp{n})(\lambda, x) = {} & \det(\re^{A^+(\lambda)x}) \det(V_{-n}^+, \dots, V_{n}^+)(\lambda) \\
& {} + \re^{\real\left(\nup{-n} + \cdots + \nup{n}\right)(\lambda) x} \  \Cal{O}_{\lambda, x}(\re^{- n\kappa x})
\end{align*}
is non-zero for $x$ large enough (recall that $\trace{A^+(\lambda)} = \nup{-n}(\lambda) + \cdots + \nup{n}(\lambda)$).
We further refer to 
\begin{equation*}
E(\lambda) \deq \det(F_{-n}^+, \dots, F_{-1}^+, F_{1}^-, \dots, F_{n}^-)(\lambda, 0)
\end{equation*}
as an Evans function. We are now ready to complete the proof of Lemma \ref{l:spatial-eigenvalues}.

\begin{proof}[Proof of Lemma \ref{l:spatial-eigenvalues} -- part 2]
\label{proof:spatial-eigenvalues-2}
We now work with $\absolute{\lambda} \leq M$. Potentially shrinking $\theta$, Assumption \ref{a:essential-spectrum} ensure that only spatial eigenvalues associated to $\lambda_1^+ = 0$ approach the imaginary axis. A Taylor expansion of the dispersion relation together with $\alpha_1 \neq 0$ in item \textit{Single marginal point} guarantees only one spatial eigenvalue $\nup{}(0)$ has zero real part. We assume through labeling that it is either $\nup{1}$ or $\nup{-1}$. 

Assume by contradiction that it is $\nup{-1}$, meaning $\real{\nup{}(\lambda)} \leq 0$ for large values of $\lambda \in \R$. From assumption \ref{a:essential-spectrum}, $\Phi_* \deq \transpose{\left(\frac{1}{\varOmega} \uu', (\frac{1}{\varOmega} \uu')'\right)}$ is a bounded solution to \eqref{e:1st-order-pb} that can be continued for nonzero values of $\lambda$. From the assumed sign of $\nup{}(\lambda)$, $\Phi_*(\lambda) \in \Span(F_{-n}^+, \dots, F_{-1}^+) \cap \Span(F_{1}^-, \dots, F_{n}^-)$ is localized at both infinity when $\lambda \neq 0$, and thus contributes to a zero of the Evans function $E(0) = 0$. This is a contradiction with Assumption \ref{a:point-spectrum}.

Thus, for all $\nupm{j}$ except $\nup{1}$, we have $\absolute{\real{\nu(\lambda)}} \geq C$, which conclude the proof of \eqref{e:spatial-gap}. On the other hand, $\real{\nup{1}(0)} = 0$ and the contact between the spectrum and the imaginary axis is entirely described by $\nup{1}$. As mentioned above, $\nup{1}(0)$ is a simple solution of $d^+(0, \nu) = 0$ that can be holomorphically continued for small values of $\lambda$. A Taylor expansion of $\nup{1}(i\xi)$ at $\xi = \xi_0$ combined with Assumption \ref{a:essential-spectrum} assures that $\partial_\lambda \nup{1}(\xi_0)$ is real. We also claim that it is positive. Indeed, from labeling $\real{\nup{1}(\lambda)}$ becomes negative when $\lambda$ enters the interior of the spectrum $\spectrum(\cL)^\circ$ and a Taylor expansion concludes.

Fix $m>0$. By further shrinking $\theta$ if necessary, we can ensure that $\spectrum(\cL) \cap \varLambda \subset B(0, \frac{1}{2}m)$. Thus for all $\absolute{\lambda} \geq m$, $\nup{1}(\lambda)$ stays away from the imaginary axis and \eqref{e:spectral-gap-2} holds by taking a smaller $C$.
\end{proof}

\begin{lemma}
\label{l:projection}
The projection $\pi$ admits a holomorphic extension $\pi(\lambda)$ for all $\lambda \in \Lambda$. There exists $m>0$ such that the following holds. If $\absolute{\lambda} \leq m$, then $\range \pi(\lambda)$ is one dimensional, and there exists $\vp(\lambda)$, $\dvp(\lambda)$ left and right eigenvector of $\lambda - \cL[\nup{1}(\lambda)]$ with eigenvalue $0$ such that 
\begin{equation*}
\pi(\lambda) = \vp(\lambda) \dvp(\lambda).
\end{equation*}
\end{lemma}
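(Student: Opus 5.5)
The plan is to define $\pi(\lambda)$ as the Riesz spectral projection of the $n\times n$ matrix $\cL^+[\nup{1}(\lambda)]$ onto its eigenvalue $\lambda$, where $\cL^+[\nu]$ denotes the symbol of the weighted asymptotic operator, so that $\det(\lambda-\cL^+[\nup{1}(\lambda)])=d^+(\lambda,\nup{1}(\lambda)-\kappa+\ri\xi_0)=0$. At $\lambda=0$ we have $\nup{1}(0)=\ri\xi_0$, up to the normalisation of $\varOmega$ used in the labeling of Lemma \ref{l:spatial-eigenvalues}, and this reproduces $\pi$.

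The first step is to check that $0$ is an algebraically simple eigenvalue of $\cL^+[\ri\xi_0]$. Assumption \ref{a:essential-spectrum} gives that $\lambda_1^+(\xi_0)=0$, while $\real\lambda_k^+(\xi_0)\leq-\eta$ for $k\neq1$. Since the $\lambda_k^+(\xi_0)$, $1\le k\le n$, are exactly the eigenvalues of $\cL^+[\ri\xi_0]$ counted with multiplicity, $0$ is simple. Simplicity persists under perturbation. The map $\lambda\mapsto \cL^+[\nup{1}(\lambda)]-\lambda$ is holomorphic near $0$, because $\nup{1}$ is holomorphic near $0$ by Lemma \ref{l:spatial-eigenvalues}\ref{i:spatial-gap-2} ($\nup{1}(0)$ is a simple root of the dispersion relation). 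Hence Kato's perturbation theory \cite[Chapter two, \S 1]{Kato-95} gives, for $|\lambda|\le m$, a simple eigenvalue $\mu(\lambda)$ near $0$ with a holomorphic rank-one Riesz projection
\[
\pi(\lambda)=\frac{1}{2\pi\ri}\oint_{|z|=r}\bigl(z-(\cL^+[\nup{1}(\lambda)]-\lambda)\bigr)^{-1}\d z .
\]
We need $\mu(\lambda)=0$. This holds because $\det(\lambda-\cL^+[\nup{1}(\lambda)])=0$ and the other eigenvalues of $\cL^+[\nup{1}(\lambda)]-\lambda$ stay at distance at least about $\eta$ from $0$. Consequently $\pi(\lambda)$ is the projection onto $\ker(\lambda-\cL^+[\nup{1}(\lambda)])$ along its range.

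Next comes the factorisation. A rank-one projection $P$ with $P\,(\lambda-\cL^+)=(\lambda-\cL^+)\,P=0$ can be written as $P=v\tilde v$ with $\tilde v v=1$, where $v$ spans the range and $\tilde v$ spans the left kernel. To make $v$ and $\tilde v$ holomorphic, fix a vector $e$ and a row $\tilde e$ with $\tilde e\,\pi(0)\,e\neq0$. For $|\lambda|\le m$, with $m$ small, set
\[
\vp(\lambda)=\pi(\lambda)e,\qquad \dvp(\lambda)=\frac{\tilde e\,\pi(\lambda)}{\tilde e\,\pi(\lambda)e}.
\]
Both are holomorphic. Since $\pi$ has rank one, $\pi=\pi e\,\tilde e\pi/(\tilde e\pi e)$. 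They are right and left null vectors because $\pi(\lambda)$ commutes with $\lambda-\cL^+[\nup{1}(\lambda)]$ and kills it.

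For the extension to all of $\varLambda$, the plan is to apply the same Riesz construction locally around each $\lambda_0\in\varLambda$, grouping the eigenvalues of $\cL^+[\nup{1}(\lambda_0)]$ that equal $\lambda_0$. Away from $B(0,m)$ we only claim holomorphy, not rank one, so collisions are harmless: the group projection is holomorphic exactly as in the construction of the bases preceding the Evans function. I expect the main obstacle to be the global holomorphy of $\nup{1}$ itself on $\varLambda$, since spatial eigenvalues may collide. I would handle this by using Lemma \ref{l:spatial-eigenvalues}: there $\nup{1}$ is separated from $N_s$ and $N_u$ except possibly at isolated points. If needed, I would instead define $\pi(\lambda)$ on $\varLambda\setminus B(0,m)$ via the group projection associated to $\nup{1}$ and its collision partners, and I would note that only the local statement near $0$ is used quantitatively later.
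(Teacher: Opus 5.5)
Your proof of the local statement is correct, and it takes a different route from the paper. The paper works with the $2n\times 2n$ companion matrix:
\begin{itemize}
\item it uses the \emph{Spatial Jordan block} item of Assumption~\ref{a:point-spectrum} to get an eigenbasis of $A^+(\lambda)$ for $\absolute{\lambda}\le m$;
\item it reads $\vp$ off the companion form $\transpose{(v,\nup{1}v)}$;
\item it takes a left null vector $\dvp$ from a dual family, equivalently a left eigenvector of $B$;
\item it sets $\pi(\lambda)=\vp\dvp$ and justifies rank one by simplicity of $\nup{1}$ as an eigenvalue of $A^+(\lambda)$.
\end{itemize}
You instead take the Riesz projection of the $n\times n$ matrix $\cL^+[\nup{1}(\lambda)]-\lambda$ at $0$. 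Your input is algebraic simplicity of $0$ for $\cA^+[-\kappa+\ri\xi_0]$, from \emph{Single marginal point} and \emph{Spectral gap 2}.

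This buys you two things. First, you never need the Jordan-block hypothesis. Second, you get a genuine projection with $\dvp\vp=1$ directly. Simplicity of $\nup{1}$ for $A^+$ only gives a one-dimensional kernel. A biorthogonality normalization in $\C^{2n}$ controls $\dvp\,\partial_\nu\cL^+[\nup{1}(\lambda)]\,\vp$, not $\dvp\vp$. Rescaling $\vp\dvp$ into a projection needs $\dvp\vp\neq0$, which is exactly the $\lambda$-direction simplicity you check.

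In return, the paper's route aims to identify $\vp,\dvp$ with the $n$-components of the asymptotic vectors of $\Psip{1}$ and $\dPhip{1}$. That is how the lemma is used in \eqref{e:bound-Kii}. If you use your normalized $\pi(\lambda)$ there, note the following. For right and left null vectors $v,w$, pair the $\lambda$-derivative of $(\lambda-\cL^+[\nup{1}(\lambda)])v=0$ with $w$; this gives $wv=\partial_\lambda\nup{1}(\lambda)\,w\,\partial_\nu\cL^+[\nup{1}(\lambda)]\,v$. The duality $\dPhip{1}S\Psip{1}=1$ fixes the last pairing to $\pm1$ for the asymptotic vectors. So the leading part of $\psip{1}(x)\dphip{1}(y)$ is $\pm\partial_\lambda\nup{1}(\lambda)\,\pi(\lambda)\,\re^{-\nup{1}(\lambda)(y-x)}$, and that scalar factor must be carried into $\ki$.

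The global claim is where your proposal does not deliver, and neither does the paper's one-line appeal to the discussion after Lemma~\ref{l:ODE-behavior}.
\begin{itemize}
\item Lemma~\ref{l:spatial-eigenvalues} does not separate $\nup{1}$ from $N_u$ outside $B(0,m)$. There $\nup{1}$ and $\nup{2},\dots,\nup{n}$ all have real part at least $r(\lambda)$ and may collide.
\item At a non-diagonalizable collision, which Assumption~\ref{a:point-spectrum} allows if it is isolated and away from $0$, $\nup{1}$ can have a square-root branch point. Isolatedness does not rescue holomorphy.
\item By the identity theorem, any holomorphic extension to the connected set $\varLambda$ must be the analytic continuation of your rank-one family from $B(0,m)$. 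That continuation can be multivalued around such a point.
\item Defining $\pi(\lambda)$ on $\varLambda\setminus B(0,m)$ through a group projection gives a function that is not holomorphic across $\partial B(0,m)$.
\end{itemize}
As you suspected, only the local statement is needed. The pieces $\Ki,\Kii,\Kiii$ are only integrated over $\varGamma_0\subset B(0,m)$, or over $\varGamma_{\mathrm s}$, which has the same endpoints. On $\varGamma_\infty$, the proof of Proposition~\ref{p:green-kernel-bounds-1} bounds the full kernel $K$ directly. So state the lemma and the decomposition for $\absolute{\lambda}\le m$, put $\int_{\varGamma_\infty}\re^{\lambda t}K\,\d\lambda$ into $\Giv$, and drop the global-extension claim.

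A small notational point. You write $\det(\lambda-\cL^+[\nup{1}(\lambda)])=d^+(\lambda,\nup{1}(\lambda)-\kappa+\ri\xi_0)$ and then $\nup{1}(0)=\ri\xi_0$. These are incompatible: the first convention forces $\nup{1}(0)=0$. The paper mixes the same two conventions. Fix one; in either, the relevant matrix at $\lambda=0$ is $\cA^+[-\kappa+\ri\xi_0]$, and your argument is unchanged.
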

\begin{proof}
Assumption \ref{a:point-spectrum} item \textit{Spatial Jordan block} provides a (small) $m>0$ such that when $\absolute{\lambda} \leq m$ there exist a basis of $\C^{2n \times 1}$ composed of $A^+(\lambda)$ eigenvectors. Due to the companion structure of $A^+$, these eigenvectors write as $\transpose{(v_j , \nu v_j)}$ with $\nu\in \C$ an eigenvalue, and $v_j \in \C^{n \times 1}$ in the kernel of $\lambda - \cL^+[\nu]$. Let $(\tilde{v}_k)_k$ be the dual family, defined as 
\begin{equation*}
\text{for all }\ 1 \leq \absolute{j}, \absolute{k} \leq n, 
\hspace{4em}
\scalp{v_j, \tilde{v}_k} = \ind{j = k}.
\end{equation*}
Then for all $1 \leq \absolute{j}, \absolute{k} \leq n$, 
\begin{align*}
\scalp{v_j, (\lambda - \cL^+[\nu])^* \tilde{v}_k} = {}& \scalp{(\lambda - \cL^+[\nu]) v_j, \tilde{v}_k} = 0.
\end{align*}
which implies 
\begin{equation*}
{\tilde{v}_k}^* (\lambda - \cL^{T,+}[-\nu]) = {\tilde{v}_k}^* (\lambda - \cL^+[\nu]) = 0.
\end{equation*}
As a consequence, the row vector ${\tilde{v}_1}^+ \deq {\tilde{v}_1}^*$ is a left eigenvector for $(\lambda - \cL^{T,+}[-\nu])$, which is equivalent to $({\tilde{v}_1^+}, -\nu {\tilde{v}_1^+})$ being a left eigenvector for $B(\lambda)$. We then define 
\begin{equation*}
\pi(\lambda) = v_1^+(\lambda) \cdot \tilde{v}_1^+(\lambda) \in \C^{n\times n}.
\end{equation*}
As discussed after the proof of Lemma \ref{l:ODE-behavior}, the vectors $v_1$ and $\tilde{v}_1$ can be holomorpically extended to all $\lambda \in \varLambda$. Then $\pi(\lambda)$ is the projection onto $\ker \lambda - \cL[\nup{1}(\lambda)]$. Indeed, simplicity of the eigenvalue $\nup{1}$ for $A^+(\lambda)$ ensures that this projection has one dimensional range.
\end{proof}

We now refine the above bases of solutions. In the following lemma, we restrict to $\lambda \in \varLambda$.
\begin{lemma}
\label{l:def-ODE-solution}
There exists an holomorphic basis of solutions for \eqref{e:1st-order-pb} denoted both
\begin{equation}
\label{e:bases-ODE}
(\Phip{-n}, \dots, \Phip{-1}, \Psip{1}, \dots, \Psip{n}) = (\Psim{-n}, \dots, \Psim{-1}, \Phim{1}, \dots, \Phim{n}),
\end{equation}
that satisfy $\Phim{1}(0, \cdot) = \Psip{1}(0, \cdot) = \transpose{\left(\varOmega^{-1} \uu', (\varOmega^{-1} \uu')'\right)}$, and for all $j\in \Set{1, \dots, n}$, for almost every $\lambda \in \varLambda$
\begin{equation*}
\eta^+(\Phip{-j}) = \real{\nup{-j}},
\hspace{4em}
\eta^-(\Phim{j}) = \real{\num{j}}.
\end{equation*}
Furthermore $\eta^+(\Psip{1}) = \real{\nup{1}}$, while for all $2 \leq j \leq n$ and almost every $\lambda \in \varLambda$
\begin{align*}
\eta^-(\Psim{-j}(\lambda))\ , \ \eta^-(\Psim{-1}(\lambda)) \ \leq \ -r(\lambda),
\hspace{4em}
r(\lambda) \leq \eta^+(\Psip{j}(\lambda)).
\end{align*}
\end{lemma}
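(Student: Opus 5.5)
We build the basis from the temporary families $(F_j^\pm)$ obtained after Lemma \ref{l:ODE-behavior}, modifying only the elements needed to obtain the stated asymptotics and the normalisation at $\lambda=0$.

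\emph{Step 1: the localised solutions.} Set $\Phip{-j} \deq \Fp{-j}$ and $\Phim{j} \deq \Fm{j}$ for $1\le j\le n$. These solutions are holomorphic on $\varLambda$. Lemma \ref{l:ODE-behavior} gives $\eta^+(\Phip{-j})=\real{\nup{-j}}$ and $\eta^-(\Phim{j})=\real{\num{j}}$ wherever the eigenvector $V_j$ is a genuine eigenvector. This holds off the discrete set of collision and Jordan points, which has no accumulation point by Assumption \ref{a:point-spectrum}; hence the statement holds almost everywhere. At collision points we only have a generalised eigenbasis, so the growth rate is only known up to the polynomial/$\varepsilon$ slack.

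\emph{Step 2: normalisation at $\lambda=0$.} By Lemma \ref{l:spatial-eigenvalues}\,\ref{i:spatial-gap-2}, $\nup{1}(0)$ is simple and purely imaginary. The vector $\Phi_*\deq\transpose{(\varOmega^{-1}\uu',(\varOmega^{-1}\uu')')}$ solves \eqref{e:1st-order-pb} at $\lambda=0$. It is bounded at $+\infty$ by Lemma \ref{l:front}, with exact rate $\real{\nup{1}(0)}=0$ thanks to the \emph{Generic localization} item. Since $\varOmega=1$ for $x\le -1$ and $\uu'$ decays exponentially at $-\infty$ (because $J_g(U^-)$ is stable), $\Phi_*$ lies in $\Span(\Fm{1},\dots,\Fm{n})$ at $\lambda=0$.

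We write $\Phi_*=\sum_j c_j\Fm{j}(0)$ and extend this relation holomorphically by setting $\Phim{1}(\lambda)\deq \sum_j c_j \Fm{j}(\lambda)$. After reordering, we may assume $c_1\neq 0$, so this replacement preserves the basis property. We then keep $\Phim{1}$ as the first element of the right group.

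On the $+\infty$ side, $\Phi_*$ has no component along any growing mode at $\lambda=0$. We therefore define $\Psip{1}(\lambda)$ as $\Fp{1}(\lambda)$ rescaled by a holomorphic scalar, plus a holomorphic combination of the $\Fp{-j}$, chosen so that $\Psip{1}(0)=\Phi_*$. The coefficients can be chosen holomorphic because they are obtained by solving $\Phi_* = a\Fp{1}(0)+\sum b_j\Fp{-j}(0)$ at the single point $0$ and then extending them as constants. The bound $\eta^+(\Psip{1})=\real{\nup{1}}$ survives for small $\lambda$, because $\real{\nup{-j}}\le -r<\real{\nup{1}}$ by \eqref{e:spatial-gap}.

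\emph{Step 3: the identity \eqref{e:bases-ODE}.} For $2\le j\le n$, set $\Psip{j}\deq\Fp{j}$. Then $\eta^+(\Psip{j})=\real{\nup{j}}\ge r(\lambda)$ by \eqref{e:spatial-gap}.

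The remaining task is to express the right-hand family in terms of the left one. Since both families are bases of the same $2n$-dimensional solution space, we define $\Psim{-j}$ so that the two ordered lists coincide as sets. Concretely, we set $\Psim{-j}\deq\Phip{-j}$ for $j\ge 1$, and the identity holds element by element provided the $\Phim{j}$ are likewise identified with $\Psip{1},\dots,\Psip{n}$. This requires that the unstable-at-$+\infty$ solutions span the same space as the localised-at-$-\infty$ ones.

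That spanning property is the main obstacle. It amounts to $E(\lambda)\neq 0$ on $\varLambda$ (Assumption \ref{a:point-spectrum}), which makes $\Span(\Phip{-j})\oplus\Span(\Phim{j})$ the whole space. I would therefore instead take
\begin{equation*}
\Psip{j}\deq \Phim{j}, \qquad \Psim{-j}\deq \Phip{-j},
\end{equation*}
with $\Psip{1}=\Phim{1}$ coinciding at $0$ by Step 2. In this version the growth claims must be read from Lemma \ref{l:spatial-eigenvalues}. A solution in $\Span(\Phim{j})$ that is not in $\Span(\Phip{-j})$, which holds because $E\neq0$, has a nonzero component on some mode in $N_u\cup\{\nup{1}\}$ at $+\infty$. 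Hence $\eta^+\ge r(\lambda)$ for $j\ge2$ after a holomorphic triangular change of basis that removes the $\nup{1}$ component. Symmetrically, $\eta^-(\Phip{-j})\le -r(\lambda)$ follows from $N_s$ and the stable part at $-\infty$. The delicate step is making this triangular change holomorphic while avoiding the discrete Jordan set, which is why the estimates are stated only for almost every $\lambda$.
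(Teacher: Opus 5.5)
\textbf{The gap: $\Psip{1}=\Phim{1}$ must be a single solution.} Your final labelling matches the paper's. The basis is the Evans family $(\Fp{-n},\dots,\Fp{-1},\Phim{1},\Fm{2},\dots,\Fm{n})$ with $\Psim{-j}=\Phip{-j}$ and $\Psip{j}=\Phim{j}$, and $E(\lambda)\neq 0$ is what controls growth at $+\infty$. The problem is how you build the shared element $\Psip{1}=\Phim{1}$.

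Extending the coefficients of $\Phi_*$ as constants, separately on each side, gives two different solutions that agree only at $\lambda=0$. Suppose you set $\Psip{1}\deq\Phim{1}=\sum_j c_j\Fm{j}(\lambda)$ and write $\Fm{j}=\sum_i M_{ij}(\lambda)\Fp{i}$. The coefficients of $\Psip{1}$ on the $\Fp{i}$, $i\ge 2$, are then $\sum_j M_{ij}(\lambda)c_j$. These vanish at $\lambda=0$ but generically not for $\lambda\neq0$ when $n\ge 2$. So $\eta^+(\Psip{1})=\real\nup{1}$ fails away from the origin.

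If you instead keep your two Step 2 extensions distinct, the two ordered lists in \eqref{e:bases-ODE} are no longer the same basis. Both properties are needed later. $\Kiii$ and the transported Gaussian in $\gi$ require $\psip{1}\sim\re^{\nup{1}x}\vp$ at $+\infty$. The formula for $\Kiv$ and the jump computation require $\psip{1}=\phim{1}$ identically.

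\textbf{The missing idea: take the intersection line.} Because $E(\lambda)\neq0$, $\Span(\Fm{1},\dots,\Fm{n})\oplus\Span(\Fp{-n},\dots,\Fp{-1})$ is the whole solution space. Hence
\[
\Span(\Fm{1},\dots,\Fm{n})\cap\Span(\Fp{-n},\dots,\Fp{-1},\Fp{1})
\]
has dimension $n+(n+1)-2n=1$ for every $\lambda\in\varLambda$, and it contains $\Phi_*$ at $\lambda=0$. Take $\Psip{1}=\Phim{1}$ to be a holomorphic generator of this line, normalized to $\Phi_*$ at $0$. One choice is the vector of signed maximal minors of the $(n-1)\times n$ system $\sum_j M_{ij}\gamma_j=0$, $i\ge 2$. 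It is nowhere zero because that system has rank exactly $n-1$ throughout $\varLambda$.

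With this choice the growth rates follow directly:
\begin{itemize}
\item The $\Fp{1}$-coefficient of the generator cannot vanish, since otherwise it would lie in $\Span(\Fm{})\cap\Span(\Fp{-})=\Set{0}$. This gives $\eta^+(\Psip{1})=\real\nup{1}$.
\item For $j\ge 2$, $\Fm{j}$ cannot lie in this line without breaking the basis property. So $\Fm{j}$ has a nonzero component on some $\Fp{i}$ with $i\ge 2$, and $\eta^+(\Fm{j})\ge r(\lambda)$.
\end{itemize}
This is exactly the paper's argument. It treats $\Psip{1}=\Phim{1}$ as one solution and notes that the expansion of $\Fm{j}$ in the $\Fp{i}$ must involve some index $i\ge 2$.

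Your ``triangular change of basis removing the $\nup{1}$ component'' then becomes unnecessary, and it is actually harmful. Subtracting a multiple of $\Phim{1}$ from $\Fm{j}$ introduces the $\num{1}$ mode at $-\infty$ (you assumed $c_1\neq0$). That destroys the claimed $\eta^-(\Phim{j})=\real\num{j}$.
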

\begin{proof}
From Lemma \ref{l:spatial-eigenvalues} proof, $\Phi_* \in \Span(F_{-n}^+, \dots, F_{1}^+) \cap \Span(F_{1}^-, \dots, F_{n}^-)$ at $\lambda = 0$, since $\Phi_*$ is a bounded solution. We can use either decomposition to extend $\Phi_*$ holomorphically to $\lambda \in \varLambda$, and we call both $\Psip{1}(\lambda, x)$ or $\Phim{1}(\lambda, x)$ this solution. It is then direct to check that 
\begin{align*}
(\Fp{-n}, \dots, \Fp{-1}, \Psip{1}, \Fp{2}, \dots, \Fp{n})\\
(\Fm{-n}, \dots, \Fm{-1}, \Phim{1}, \Fm{2}, \dots, \Fm{n})
\end{align*}
are still bases of solutions. Away from Jordan blocks, we further have for all $j \neq 1$
\begin{equation}
\begin{aligned}
\label{e:value-eta-ODE}
\eta^+(\Psip{1}) = \real{\nup{1}}
\qquad & \qquad
\eta^+(\Phim{1}) = \real{\num{1}}
\\
\eta^+(\Fp{j}(\lambda)) = \real{\nup{j}}, 
\qquad & \qquad
\eta^-(\Fm{j}(\lambda)) = \real{\num{j}}.
\end{aligned}
\end{equation}
From \eqref{e:value-eta-ODE} and due to the Evans function not vanishing, we conclude that 
\begin{equation}
\label{e:Evans-basis}
(\Fp{-n}, \dots, \Fp{-1}, \Phim{1}, \Fm{2}, \dots, \Fm{n}),
\end{equation}
is a basis of solutions that we label \eqref{e:bases-ODE}. To conclude the proof, it only remains to compute $\eta^+(\Psip{j}) = \eta^+(\Fm{j})$ for $j\geq 2$ and $\eta^-(\Psim{-j}) =  \eta^-(\Fp{-j})$ for $j\geq 1$. To do so, we decompose $\Fm{j}$ into the basis $(\Fp{i})_{1\leq \absolute{i} \leq n}$. Because \eqref{e:Evans-basis} is a basis, this decomposition necessarily involve at least one index $i \geq 2$, and thus leads to $\eta^+(\Fm{j})\geq r(\lambda)$ from \eqref{e:value-eta-ODE} together with Lemma \ref{l:spatial-eigenvalues}. The computation of $\eta^-(\Fp{-j})$ is similar.
\end{proof}

\subsubsection{Dual basis of solution}
We now construct solutions for \eqref{e:1st-order-trans-pb}, that have a special structure with respect to the previous basis. Let us denote
\begin{equation*}
S(x) \deq \begin{pmatrix}
-\ell_1(x) & -\ell_2\\
\ell_2 & 0 
\end{pmatrix}.
\end{equation*}
\begin{lemma}
\label{l:S-invertible}
Let $\Phi$ be a solution to \eqref{e:1st-order-pb} and $\tilde{\Phi}$ be a solution to \eqref{e:1st-order-trans-pb}. Then $\tilde{\Phi}(x) S(x) \Phi(x)$ does not depend on $x$. In addition, $S(x)$ is invertible. 
\end{lemma}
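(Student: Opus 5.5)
The plan is to differentiate the product $\dual{\Phi}(x) S(x) \Phi(x)$ in $x$ and show that the derivative vanishes identically. By the product rule, \eqref{e:1st-order-pb} and \eqref{e:1st-order-trans-pb},
\begin{equation*}
\partial_x\big(\dual{\Phi} S \Phi\big) = \dual{\Phi}\,\big(B(\lambda,x) S(x) + S'(x) + S(x) A(\lambda,x)\big)\,\Phi .
\end{equation*}
It therefore suffices to check the matrix identity $B S + S' + S A = 0$, blockwise in $n\times n$ blocks.

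Write $S = \begin{pmatrix} -\ell_1 & -\ell_2 \\ \ell_2 & 0\end{pmatrix}$, so $S' = \begin{pmatrix} -\ell_1' & 0 \\ 0 & 0\end{pmatrix}$ since $\ell_2$ is constant. Compute $SA$ from the companion form of $A$:
\begin{equation*}
SA = \begin{pmatrix} -(\lambda-\ell_0) & -\ell_1 + \ell_1 \\ 0 & \ell_2 \end{pmatrix} = \begin{pmatrix} \ell_0-\lambda & 0 \\ 0 & \ell_2\end{pmatrix}.
\end{equation*}
Similarly, from the form of $B$,
\begin{equation*}
BS = \begin{pmatrix} \lambda-\ell_0+\ell_1' & 0 \\ -\ell_1+\ell_1 & -\ell_2 \end{pmatrix} = \begin{pmatrix} \lambda-\ell_0+\ell_1' & 0 \\ 0 & -\ell_2 \end{pmatrix}.
\end{equation*}
Summing with $S'$ gives zero in every block.

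The cancellations use that $\ell_2$ commutes with $\ell_2^{-1}$, which is trivial. More importantly, the order of products with $\ell_1$ must match: $\ell_1\ell_2^{-1}\ell_2=\ell_1$ on one side and $\ell_2\ell_2^{-1}\ell_1=\ell_1$ on the other. This is the one point requiring care, namely making sure the convention for $\transpose{\cL}$ (row vectors multiplied on the right by $\ell_i$) matches the definition of $B$, including the $+\ell_1'$ term coming from $-\partial_y(\dual{\phi}\,\ell_1)$. I expect this bookkeeping to be the only real obstacle. If a sign convention in the stated $B$ differs, I would re-derive $B$ directly from \eqref{e:trans-res-pb} with $\dual{\Phi}=(\dual\phi,\ \partial_y\dual\phi)$ up to the obvious reordering, and adjust $S$ accordingly.

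Invertibility of $S(x)$ follows from its block anti-triangular structure. Since $S = \begin{pmatrix} -\ell_1 & -\ell_2\\ \ell_2 & 0\end{pmatrix}$, we have $\det S = \det(\ell_2)^2$ up to a sign $(-1)^{n}$, via the block formula after swapping block columns. Alternatively, the explicit inverse is
\begin{equation*}
\begin{pmatrix} 0 & \ell_2^{-1} \\ -\ell_2^{-1} & -\ell_2^{-1}\ell_1\ell_2^{-1}\end{pmatrix}.
\end{equation*}
Since $\ell_2 = d$ is diagonal with positive definite real part, it is invertible, and hence so is $S(x)$ for every $x$.
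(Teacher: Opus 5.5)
Your proposal is correct and matches the paper's proof: differentiate $\dual{\Phi} S \Phi$, reduce to the identity $BS + S' + SA = 0$, and exhibit the explicit inverse of $S(x)$. Your blockwise computation, which the paper leaves as ``direct to compute'', is consistent with the stated $A$ and $B$, so no re-derivation of $B$ is needed; also, $\det S(x) = \det(\ell_2)^2$ holds exactly, with no factor $(-1)^n$.
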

\begin{proof}
We simply differentiate this quantity, and use the ODE that $\Phi$ and $\tilde{\Phi}$ satisfy. We get
\begin{equation*}
(\dual{\Phi} S \Phi)' = \dual{\Phi}(BS + S' + SA)\Phi,
\end{equation*}
and it is direct to compute that $BS + S' + SA = 0$. Furthermore,
\begin{equation*}
S(x)^{-1} = 
\begin{pmatrix}
0 & \ell_2^{-1} \\
-\ell_2^{-1} & -\ell_2^{-1} \ell_1(x) \ell_2^{-1}
\end{pmatrix}.
\end{equation*}
\end{proof}

\begin{lemma}
\label{l:def-dual-ODE-solution}
There exists an holomorphic basis of solutions for \eqref{e:1st-order-trans-pb} denoted both
\begin{equation}
\label{e:dual-bases-ODE}
(\dPsip{-n}, \dots, \dPsip{-1}, \dPhip{1}, \dots, \dPhip{n}) = (\dPhim{-n}, \dots, \dPhim{-1}, \dPsim{1}, \dots, \dPsim{n}), 
\end{equation}
that satisfy for all $(j,k)\in \Set{1, \dots, n}^2$
\begin{equation}
\label{e:duality-relation}
\begin{aligned}
\dPsip{-j} \ S \ \Phip{-k} = \ind{j = k}, \hspace{4em}
\dPsip{-j} \ S \ \Psip{k} = 0, \\
\dPhip{j} \ S \ \Phip{-k} = 0, \hspace{4em}
\dPhip{j} \ S \ \Psip{k} = \ind{j = k}.
\end{aligned}
\end{equation}
In particular for all $j\in \Set{1, \dots, n}$
\begin{equation*}
\eta^\pm(\dPsip{-j}) = -\eta^\pm(\Phip{-j}),
\hspace{4em}
\eta^\pm(\dPhip{j}) = -\eta^\pm(\Psip{j}).
\end{equation*}
\end{lemma}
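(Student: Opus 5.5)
The plan is to define the dual basis by inverting a fundamental matrix. Let
\[
\mathcal{F}(\lambda,x) \deq (\Phip{-n}, \dots, \Phip{-1}, \Psip{1}, \dots, \Psip{n})(\lambda,x) \in \C^{2n\times 2n},
\]
whose columns form the holomorphic basis of Lemma \ref{l:def-ODE-solution}. Since these columns are linearly independent solutions of \eqref{e:1st-order-pb}, $\mathcal{F}(\lambda,x)$ is invertible for every $x$. By Lemma \ref{l:S-invertible}, $S(x)$ is also invertible, so we can set
\[
\dual{\mathcal{F}}(\lambda,x) \deq \mathcal{F}(\lambda,x)^{-1} S(x)^{-1}.
\]
We then define the rows of $\dual{\mathcal{F}}$, in order, to be $\dPsip{-n},\dots,\dPsip{-1},\dPhip{1},\dots,\dPhip{n}$. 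With this definition, $\dual{\mathcal{F}} S \mathcal{F} = \id[2n\times 2n]$ holds identically, and this identity is exactly the set of duality relations \eqref{e:duality-relation}. The second naming in \eqref{e:dual-bases-ODE} is only a relabelling, mirroring \eqref{e:bases-ODE}.

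Next we must check that each row solves \eqref{e:1st-order-trans-pb}. We differentiate, using $\partial_x\mathcal{F}=A\mathcal{F}$, which gives $\partial_x(\mathcal{F}^{-1})=-\mathcal{F}^{-1}A$. This yields
\[
\partial_x\dual{\mathcal{F}} = -\mathcal{F}^{-1}A S^{-1} - \mathcal{F}^{-1}S^{-1}S'S^{-1}.
\]
The identity $BS+S'+SA=0$ from Lemma \ref{l:S-invertible} gives $-AS^{-1}-S^{-1}S'S^{-1}=S^{-1}B$. Hence $\partial_x\dual{\mathcal{F}}=\dual{\mathcal{F}}B$, as required. Holomorphy in $\lambda\in\varLambda$ follows because $\mathcal{F}$ is holomorphic and invertible, $\mathcal{F}^{-1}=\mathrm{adj}(\mathcal{F})/\det\mathcal{F}$ with a non-vanishing determinant, and $S$ does not depend on $\lambda$. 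The rows are linearly independent because $\dual{\mathcal{F}}$ is invertible, so they form a basis.

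The step I expect to be the main obstacle is the growth-rate identities $\eta^\pm(\dPsip{-j})=-\eta^\pm(\Phip{-j})$, and the analogous ones for $\dPhip{j}$, because inverting a matrix mixes the columns. The plan is to work at generic $\lambda$, away from the discrete spatial Jordan set of Assumption \ref{a:point-spectrum}, where the growth rates of the columns are distinct or at least well separated. Everything is then compared with the constant-coefficient problem: $S(x)\to S^\pm$, $\mathcal{F}(\lambda,x)$ behaves like $\re^{A^\pm(\lambda)x}$ times a fixed invertible matrix up to $\cO(\re^{-\kappa|x|/2})$ relative errors by Lemma \ref{l:ODE-behavior}, and $\dual{\mathcal{F}}$ behaves like the corresponding left-eigenvector rows times $\re^{-\nu x}$. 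For the upper bound, the relation $\dPsip{-j}S\Phip{-k}=\ind{j=k}$ with $\eta^+(\Phip{-j})=\real\nup{-j}$ forces $\eta^+(\dPsip{-j})\geq-\real\nup{-j}$; otherwise the constant pairing would tend to zero. For the matching upper bound, we write $\dPsip{-j}$ in the asymptotic dual basis of rows $\tilde V_k^+\re^{-\nu_k x}$, each of rate $-\real\nu_k$ at $+\infty$. Pairing with the columns at $x\to+\infty$ shows that any coefficient on a row with faster growth would violate orthogonality to a column with slower growth, leaving the rate $-\real\nup{-j}$. This is a triangular argument in the ordering of rates, and it is the delicate point. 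At $-\infty$ we run the same reasoning with the basis $(\Psim{},\Phim{})$, using Lemma \ref{l:def-ODE-solution} for the rates there. Where rates coincide, the statement only needs to hold almost everywhere, consistent with Lemma \ref{l:def-ODE-solution}.
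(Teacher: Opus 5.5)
Your construction of the dual basis is correct and follows the paper's route. The paper only asserts that the duality relations determine the basis. Your computation $\partial_x(\mathcal{F}^{-1}S^{-1})=\mathcal{F}^{-1}S^{-1}B$, via $BS+S'+SA=0$, supplies the omitted check that the rows solve \eqref{e:1st-order-trans-pb}.

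Your rate argument is also sound for $\dPsip{-j}$ at $+\infty$, which is the only case the paper writes out. Your first bound, $\eta^+(\dPsip{-j})\geq-\real{\nup{-j}}$, is the lower bound, not the upper one. The argument works there because the paired columns $\Phip{-k}=\Fp{-k}$ are single asymptotic modes. Let $\dual{F}^+_i$ be the rows of $(\Fp{-n},\dots,\Fp{n})^{-1}S^{-1}$, which have exact rate $-\real{\nup{i}}$. Writing $\dPsip{-j}=\sum_i c_i\dual{F}^+_i$, the duality relations give $c_{-k}=\ind{j=k}$ directly. The remaining coefficients sit on rows $i\geq1$, which decay faster.

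The gap is the phrase ``the same reasoning'' for $\dPhip{j}$ at $+\infty$ and for the rates at $-\infty$. There the columns you pair against are $\Psip{k}=\Fm{k}$ for $k\geq2$ (resp.\ $\Psim{-k}=\Fp{-k}$). These are arbitrary mixtures of asymptotic modes at that end; Lemma \ref{l:def-ODE-solution} only gives $\eta^+(\Psip{k})\geq r(\lambda)$. Orthogonality then constrains linear combinations of the coefficients, not individual ones, so no triangular elimination is available. Excluding collisions or Jordan points does not help, because the obstruction is this mixing, not coincident rates.

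In fact the identities fail in general. Write $(\Phip{-n},\dots,\Phip{-1},\Psip{1},\dots,\Psip{n})=(\Fp{-n},\dots,\Fp{n})\,T$. The dual rows are $T^{-1}$ applied to the $\dual{F}^+_i$. So column rates are read off the supports of columns of $T$, while dual rates are read off the supports of rows of $T^{-1}$. Take $\real{\nup{2}}<\real{\nup{3}}$ and columns $\Fp{2}+\Fp{3}$ and $\Fp{3}$. The dual rows are $\dual{F}^+_2$ and $\dual{F}^+_3-\dual{F}^+_2$, so $\eta^+(\dual{F}^+_2)=-\real{\nup{2}}\neq-\real{\nup{3}}=-\eta^+(\Fp{2}+\Fp{3})$. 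The same mechanism breaks $\eta^-(\dPsip{-j})=-\eta^-(\Phip{-j})$, unless the expansion of the $\Fp{-j}$ on the $\Fm{-i}$ at $-\infty$ happens to be triangular in the ordering of the $\real{\num{-i}}$. The pairing $\dual{\Phi}S\Phi=1$ only yields $\eta^+(\dual{\Phi})\geq-\eta^+(\Phi)$ and $\eta^-(\dual{\Phi})\leq-\eta^-(\Phi)$. That is the wrong direction for termwise kernel bounds.

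The paper's ``the other ones are obtained similarly'' has exactly the same gap, since it treats every column as a pure mode $\re^{A^+x}V^+_k$. So rather than trying to complete the triangular argument, you should weaken the claim:
\begin{itemize}
\item Keep exact identities only where the column is a single asymptotic mode at the relevant end.
\item Estimate the basis-independent block sums instead, e.g.\ $\sum_{j\geq2}\Psip{j}(x)\dPhip{j}(y)=\sum_{i\geq2}\Fp{i}(x)\dual{F}^+_i(y)+\sum_{i\leq1,\,k\geq2}c_{ik}\Fp{i}(x)\dual{F}^+_k(y)$.
\end{itemize}
These block sums are what the kernel bounds of Proposition \ref{p:small-lambda} actually require.
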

\begin{proof}
We denote $\Phip{} \in \C^{2n\times n}$ and $\Psip{} \in \C^{2n\times n}$ the matrices whose columns are the $\Phip{-j}$, and $\Psip{j}$ $1\leq j\leq n$. Similarly, we denote $\dPsip{} \in \C^{2n\times n}$ and $\dPhip{} \in \C^{2n\times n}$ the matrices whose columns are the $\dPsip{j}$, and $\dPhip{-j}$ $1\leq j\leq n$. With these notations, \eqref{e:duality-relation} rewrites
\begin{equation*}
\begin{pmatrix}
\dPsip{} \\
\dPhip{}
\end{pmatrix}
S
\begin{pmatrix}
\Phip{} & \Psip{}
\end{pmatrix}
= \id[2n\times 2n].
\end{equation*}
Since both $S$ and $\begin{pmatrix} \Phip{} & \Psip{} \end{pmatrix}$ are invertible (see Lemmata \ref{l:S-invertible} and \ref{l:def-ODE-solution}) the basis we are looking for is uniquely defined by the duality relations \eqref{e:duality-relation}. Holomorphicity also comes from \eqref{e:duality-relation}. We now compute the growth rate $\eta^+(\dPsip{-j})$, the other ones are obtained similarly.
It is direct to adapt Lemma \ref{l:ODE-behavior} to the case of the dual ODE \eqref{e:1st-order-trans-pb}, and to construct a basis of solutions with asymptotic behaviors. By decomposing $\dPsip{-j}$ into this basis, we see that there exists $\dual{V}_{-j}^+ \in \C^{1\times 2n}$ such that when $y\to +\infty$,
\begin{equation*}
\dPsip{-j}(\lambda, y) = \dual{V}_{-j}^+(\lambda) \re^{B^+(\lambda) y} + \Cal{O}_{\lambda, y}(\re^{(\dual{\eta} - \frac{\kappa}{2}) \absolute{y}}), 
\end{equation*}
where for convenience we wrote $\dual{\eta} = \eta^+(\dual{V}_{-j}^+ \re^{B^+ \cdot}) = \eta^+(\dPsip{-j})$. We further denote $S^+ = \lim_{x\to +\infty} S(x)$, and recall the basis $(V_k^+(\lambda))_{1\leq \absolute{k}\leq n}$ from the proof of Lemma \ref{l:def-ODE-solution}. We claim that there exists an index $k$ such that
\begin{equation}
\label{e:computation-rate}
\eta^+(\dual{V}_{-j}^+ \re^{B^+ x} \, S^+ \, \re^{A^+ x} V_k^+) = \eta^+(\dual{V}_{-j}^+ \re^{B^+ x}) + \eta^+(\re^{A^+ x} V_k^+) \deq \dual{\eta} + \eta.
\end{equation}
Indeed, by denoting $E_1 \subset \C^{1 \times 2n}$ the eigenspace of $B^+$ associated to $\eta^+(\dual{V}_{-j}^+ \re^{B^+ x})$, and because $S^+$ has full range, there exists an eigenspace $E_2 \subset \C^{2n \times 1}$ for $A^+$ such that $E_1 S^+ E_2 \neq \Set{0}$. Because the $(V_k^+)_k$ respects the spectral projections, it is then possible to choose $k$ such that $\eta^+(\re^{A^+ x} V_k^+)$ is associated to $E_2$. This show the first equality of \eqref{e:computation-rate}.

By choice of $S$, the function $x\mapsto \dual{V}_{-j}^+ \re^{B^+ x} \, S^+ \, \re^{A^+ x} V_k^+$ is constant and we further denote it $C$. From \eqref{e:computation-rate}, we deduce that $C\neq 0$, leading to $\dual{\eta} + \eta = 0$. Then, introducing the two asymptotic behavior into \eqref{e:duality-relation}, we get when $x\to +\infty$
\begin{align*}
\ind{-j = k} = {} & \dPsip{-j} S \Phip{k}, \\
= {} & \dual{V}_{-j}^+ \re^{B^+ x} \, S^+ \, \re^{A^+ x} V_k^+ + \Cal{O}(\re^{(\tilde{\eta} + \eta - \frac{\kappa}{2}) x}),\\
= {} & C + \Cal{O}(\re^{-\frac{\kappa}{2} x}).
\end{align*}
This last equality ensures that $-j = k$, thus providing $\eta = \eta^+(\Phip{-j})$.
\end{proof}

Remark that we labeled the dual basis so that the $\dPhipm{j}$ are exponentially decaying at $\pm \infty$, while the $\dPsipm{j}$ are exponentially growing at $\pm \infty$.

\subsection{Resolvent kernels}
\label{ss:construction-resolvent-kernel}
We now study the resolvent kernel, that is the solution $x\mapsto K(\lambda, x, y) \in \C^{n\times n}$ to 
\begin{equation}
\label{e:resolvent-kernel-pb}
(\lambda - \cL(x)) K = \delta_y \, \id[n\times n],
\end{equation}
where $y \in \R$ is fixed, $\delta_y$ stands for the Dirac distribution at point $x = y$, and $\id[n\times n]$ is the $n\times n$ identity matrix. In this section, we construct $K$, and then decompose it in preparation of the decomposition of $\re^{t\cL}$. 

\subsubsection{Construction} To construct $K$, we will use the ODE solutions $\Phipm{j}$, $\Psipm{j}$, $\dPhipm{j}$ and $\dPsipm{j}$ defined in the previous section, see \eqref{e:bases-ODE} and \eqref{e:dual-bases-ODE}. Remark that the expression of $K$ will only depends on $\phipm{j}$, $\psipm{j}$, $\dphipm{j}$ and $\dpsipm{j}$, which are defined as the first $n$ components of the previous $2n$-size solutions. However, we need to work with the $2n$-size solutions for the jump condition at $x = y$ to contain enough information.

\begin{lemma}
\label{l:adjoint-kernel}
The function $y\mapsto K(\lambda, x, y)$ satisfies the transpose resolvent problem \eqref{e:trans-res-pb}.
\end{lemma}
\begin{proof}
Let $H(\lambda, x, y)$ be the resolvent kernel for the adjoint operator $\cL^*$, that is the solution to 
\begin{equation*}
(\lambda - \cL^*(x)) H = \delta_y.
\end{equation*}
By definition of the adjoint operator, for any $v_1, v_2 \in L^2(\R, \C^n)$,
\begin{equation*}
\scalp{(\lambda - \cL)v_1, v_2} = \scalp{v_1, (\lambda - \cL)^* v_2},
\end{equation*}
where $\scalp{.,.}$ denotes the duality inner product inherited from $L^2(\R, \C^n)$, with $\scalp{v_1, v_2} \in \C$. 
Denoting $e_j = \transpose{(0, \dots, 1, \dots, 0)}$ the $j$-th element of the canonical basis of $\C^n$, we choose $v_1 = K(\lambda, \cdot, y) e_k$ and $v_2 = H(\bar{\lambda}, \cdot, x) e_j$ in the above equality, to obtain\footnote{Although the chosen $v_1$, $v_2$ are not necessarily in $L^2$, this substitution is licit since it holds on every compact set $[-R, R]$.}
\begin{equation*}
\scalp{\delta_y e_k, H(\bar{\lambda}, \cdot, x) e_j} = \scalp{K(\lambda, \cdot, y)e_k, \delta_x e_j}.
\end{equation*}
This equality reads $H(\bar{\lambda}, y, x) = K(\lambda, x, y)^*$ where $K^* = \transpose{\overline{K}}$ stands for the adjoint matrix. Thus, 
\begin{equation*}
(\bar{\lambda} - \cL^*(y)) K(\lambda, x, \cdot)^* = \delta_x,
\end{equation*}
where exceptionally, derivatives in $\cL^*(y)$ refers to the second spatial variable of $K$. Taking the conjugate transpose of this matrix valued equation, we get that $K$ satisfies \eqref{e:trans-res-pb}.
\end{proof}

\begin{lemma}
Let $\K = \begin{pmatrix}
K & \partial_y K\\
\partial_x K & \partial_{xy} K
\end{pmatrix}$. Seen as a function of $x$, the jump of $\K$ at $y$ is
\begin{equation}
\label{e:jump-cond}
\jump{\K}_y = \begin{pmatrix}
0 & \ell_2^{-1} \\
-\ell_2^{-1} & -\ell_2^{-1} \ell_1(y) \ell_2^{-1}
\end{pmatrix} = S(y)^{-1}.
\end{equation}
\end{lemma}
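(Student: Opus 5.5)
We read the jump of each entry of $\K$ directly off the two resolvent problems. The second-order equation \eqref{e:resolvent-kernel-pb} fixes the jumps of $K$ and $\partial_x K$ in $x$. The transpose problem, which $K$ satisfies in $y$ by Lemma \ref{l:adjoint-kernel}, fixes the jumps in $y$, and these transfer to jumps in $x$ by symmetry. Throughout, $\jump{h}_y$ means $h(y^+) - h(y^-)$ as a function of $x$. For a function of $x-y$ this equals minus the jump in the $y$ variable at $y=x$.

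\emph{Entries $K$ and $\partial_x K$.} Write \eqref{e:resolvent-kernel-pb} as $\lambda K - \ell_2 K_{xx} - \ell_1(x) K_x - \ell_0(x) K = \delta_y \id[n\times n]$. The coefficients are bounded and $K$ is a distributional solution, so matching singularities forces $K$ to be continuous across $x=y$; otherwise $K_{xx}$ would contain a $\delta'_y$ term with nothing to balance it. Hence $\jump{K}_y = 0$. The only Dirac mass then comes from $-\ell_2 K_{xx}$, which gives $-\ell_2 \jump{\partial_x K}_y = \id[n\times n]$, that is $\jump{\partial_x K}_y = -\ell_2^{-1}$.

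\emph{Entries $\partial_y K$ and $\partial_{xy} K$.} Apply the same reasoning to the transpose problem \eqref{e:trans-res-pb}, viewed in the variable $y$ with $x$ fixed. Expanding $\transpose{\cL}$ gives $\lambda K - \partial_y K\,\ell_2 \cdot \partial_y + \dots$, specifically $-\partial_{yy}K\,\ell_2 + \partial_y K\,\ell_1(y) + K\,(\ell_1'(y) - \ell_0(y))$. This yields $K$ continuous at $y = x$ and a jump in $y$ of $\partial_y K$ equal to $-\ell_2^{-1}$, with right multiplication. The jump in $x$ at $x=y$ is the negative of the jump in $y$ at $y = x$, because $x>y$ corresponds to $y<x$. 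Therefore $\jump{\partial_y K}_y = \ell_2^{-1}$. This sign flip is the main point where an error could creep in, and we will check it against a constant-coefficient scalar heat kernel.

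For $\partial_{xy}K$, differentiate the identity $\ell_2 \partial_{xx}K + \ell_1(x)\partial_x K = \lambda K - \ell_0 K$ (valid off the diagonal) in $y$. Alternatively, and more cleanly, differentiate $\jump{\partial_x K}_y$ along the diagonal. Write $\jump{\partial_x K}(y) = \partial_x K(y^+,y) - \partial_x K(y^-,y) = -\ell_2^{-1}$, constant in $y$. Its total derivative in $y$ is zero:
\begin{equation*}
\jump{\partial_{xx} K}_y + \jump{\partial_{xy} K}_y = 0 .
\end{equation*}
From the ODE off the diagonal, together with continuity of $K$, we have $\ell_2 \jump{\partial_{xx}K}_y = -\ell_1(y)\jump{\partial_x K}_y = \ell_1(y)\ell_2^{-1}$. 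Hence $\jump{\partial_{xy}K}_y = -\ell_2^{-1}\ell_1(y)\ell_2^{-1}$.

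Assembling the four entries gives the claimed matrix. Comparing with the inverse computed in Lemma \ref{l:S-invertible} identifies it with $S(y)^{-1}$. The main obstacle is the careful bookkeeping of signs and the order of the noncommuting matrix factors $\ell_1$ and $\ell_2$, in particular in the $\partial_{xy}K$ entry. One could cross-check that entry via the $y$-equation, using the jump of $\partial_x \partial_y K$ in $y$ with right multiplication by $\ell_2^{-1}$ and $\ell_1(y)\ell_2^{-1}$. That route should give the same product $\ell_2^{-1}\ell_1\ell_2^{-1}$.
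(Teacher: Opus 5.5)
Your proof is correct and is essentially the standard argument of \cite[Lemma 4.6]{Howard-Zumbrun-98}, which the paper simply cites. You read off $\jump{K}_y=0$ and $\jump{\partial_x K}_y=-\ell_2^{-1}$ from \eqref{e:resolvent-kernel-pb}, obtain the $\partial_y K$ entry from the transpose problem with the sign flip between $x$- and $y$-jumps handled correctly, and obtain the $\partial_{xy}K$ entry by differentiating the constant jump $\jump{\partial_x K}_y$ along the diagonal, with the noncommutative order $-\ell_2^{-1}\ell_1(y)\ell_2^{-1}$ coming out right. As a minor simplification, differentiating $\jump{K}_y=0$ along the diagonal gives $\jump{\partial_y K}_y=-\jump{\partial_x K}_y$ directly, so the $\partial_y K$ entry does not need Lemma~\ref{l:adjoint-kernel}.
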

\begin{proof}
The proof is exactly the one of \cite[Lemma 4.6]{Howard-Zumbrun-98}.
\end{proof}

\begin{proposition}
There exists a unique solution to \eqref{e:resolvent-kernel-pb} that is holomorphic in $\lambda \in \varLambda$ (punctually for each $(x, y) \in \R^2$) and lying in $L^\infty(\R_x, L^1(\R_y))$ (punctually for each $\lambda \in \varLambda \backslash \spectrum(\cL)$). It writes as
\begin{equation}
\label{e:expression-K}
K(\lambda, x, y) = 
\begin{cases}
\displaystyle \sum_{j = 1}^n \phim{j}(\lambda, x) \, \dphip{j}(\lambda, y), \hspace{4ex} & x<y,\\[1.5em]
\displaystyle \sum_{j = 1}^n \phip{-j}(\lambda, x) \, \dphim{-j}(\lambda, y), & y<x.
\end{cases}
\end{equation}
\end{proposition}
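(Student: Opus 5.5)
The plan is the standard Green-function construction of \cite{Howard-Zumbrun-98}, adapted to the bases \eqref{e:bases-ODE} and \eqref{e:dual-bases-ODE}. I would first establish uniqueness, then existence, and derive the explicit form \eqref{e:expression-K} along the way.

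\emph{Uniqueness.} Fix $\lambda \in \varLambda \backslash \spectrum(\cL)$. The difference of two solutions solves $(\lambda - \cL)v = 0$ columnwise on all of $\R$ and is bounded. By Lemma \ref{l:def-ODE-solution}, a solution bounded at $+\infty$ lies in $\Span(\Phip{-j})$, possibly together with $\Psip{1}$ when $\real{\nup{1}} \le 0$. A solution bounded at $-\infty$ lies in $\Span(\Phim{j})$. Since $E(\lambda) \neq 0$ (Assumption \ref{a:point-spectrum}) and $\lambda$ lies outside the essential spectrum, these spans intersect trivially, so $v = 0$. For general $\lambda \in \varLambda$, uniqueness then follows from the identity theorem, using the required holomorphy in $\lambda$ together with uniqueness on the open set $\varLambda \backslash \spectrum(\cL)$. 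I expect $\varLambda \backslash \spectrum(\cL)$ to be nonempty and connected to the region of large $\real\lambda$; this should be checked.

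\emph{Existence.} I make the ansatz
\begin{equation*}
K = \sum_j \phim{j}(x) a_j(y) \ \text{ for } x<y,
\qquad
K = \sum_j \phip{-j}(x) b_j(y) \ \text{ for } x>y,
\end{equation*}
with unknown rows $a_j, b_j \in \C^{1\times n}$. For $x<y$ this decays as $x \to -\infty$, since $\eta^-(\Phim{j}) = \real{\num{j}} > 0$. For $x>y$ it decays as $x \to +\infty$, since $\eta^+(\Phip{-j}) \le -r(\lambda)$. The jump conditions on $K$ and $\partial_x K$ at $x = y$ form a linear system whose matrix is $(\Phip{} \ \Phim{})(y)$, with $\Phim{}$ denoting the matrix of columns $\Phim{j}$. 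This matrix is invertible because \eqref{e:bases-ODE} is a basis, which is exactly $E \neq 0$. Holomorphy of the $a_j, b_j$ then follows from holomorphy of the bases.

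To identify the coefficients I use the duality relations \eqref{e:duality-relation}. Write the jump as $\mathcal{K}$ and take the $x$-column block of \eqref{e:jump-cond}. Multiplying the relation $\sum_j \Phim{j} a_j - \sum_j \Phip{-j} b_j = $ (first block column of $S(y)^{-1}$) on the left by $\dPhip{k}(y) S(y)$ and by $\dPsip{-k}(y) S(y)$ gives $a_k = \dphip{k}$ and $b_k = \dphim{-k}$. Here I use $\dPhip{k} S \Phip{-j} = 0$ and $\dPhip{k} S \Psip{j} = \ind{j=k}$, together with the identification of bases: $\Phim{j}$ is a combination of $\Psip{}$ and $\Phip{}$. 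The analogous relations for the minus-labelled dual basis come from the second equality in \eqref{e:dual-bases-ODE}.

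The $y$-derivative blocks of $\mathcal{K}$ are then consistent automatically. This follows from Lemma \ref{l:adjoint-kernel}, or alternatively by checking that $\dphipm{}$ solve \eqref{e:trans-res-pb} away from $x$.

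Finally, the bound $K \in L^\infty(\R_x, L^1(\R_y))$ for $\lambda \notin \spectrum(\cL)$ comes from the growth rates in Lemma \ref{l:def-dual-ODE-solution}. For $x<y$, the product $\phim{j}(x)\dphip{j}(y)$ is bounded by $\re^{-c(y-x)}$, since $\eta^\pm(\dPhip{j}) = -\eta^\pm(\Psip{j})$ and $\Phim{j}$ grows more slowly than $\dPhip{j}$ decays. The case $x>y$ is symmetric, and the estimate is integrable in $y$.

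The main obstacle is this last estimate near the marginal mode $j=1$. There $\real\nup{1}$ is only slightly positive, so the exponential gap is small and uniform in $\lambda$ only off the spectrum. A second difficulty is the mixing of $\Phim{j}$ into both $\Psip{}$ and $\Phip{}$ components, which I would handle by tracking the exponential rates carefully as in the proof of Lemma \ref{l:def-ODE-solution}.
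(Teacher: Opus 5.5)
Your proposal is correct in substance, but it is organized differently from the paper.

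\textbf{The paper's route.} It argues by necessity on the full $2n\times 2n$ object $\K$. Columns solve \eqref{e:1st-order-pb} in $x$, and by Lemma \ref{l:adjoint-kernel} rows solve \eqref{e:1st-order-trans-pb} in $y$. Boundedness on each side then forces $\K = \Phim{} M_+ \dPhip{}$ for $x<y$ and $\K = \Phip{} M_- \dPhim{}$ for $x>y$, with \emph{constant} matrices $M_\pm(\lambda)$. The complete jump \eqref{e:jump-cond} and the duality relations fix $M_\pm$, and existence is read off from the resulting formula.

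\textbf{Your route.} You freeze $y$ and use only the first block column of the jump, namely $[K]_y = 0$ and $[\partial_x K]_y = -\ell_2^{-1}$. You solve for $y$-dependent rows by inverting $(\Phip{}\ \Phim{})(y)$, whose inverse is $\begin{pmatrix}\dPhim{} \\ \dPhip{}\end{pmatrix}(y) S(y)$ by \eqref{e:duality-relation}. This has two advantages:
\begin{enumerate}
\item It avoids Lemma \ref{l:adjoint-kernel} altogether, whose proof is only formal (cf.\ its footnote).
\item Your uniqueness argument properly covers $\lambda \in \spectrum(\cL)\cap\varLambda$. It uses trivial intersection of the subspaces decaying at $\pm\infty$ off $\spectrum(\cL)$, then the identity theorem. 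In the spectrum, the paper's appeal to boundedness alone cannot exclude $\Psip{1}$, which is bounded there.
\end{enumerate}
Two of your worries are unnecessary. The identity theorem only needs $\varLambda\backslash\spectrum(\cL)$ to have nonempty interior, which is clear from large real $\lambda$. And the $L^\infty(\R_x, L^1(\R_y))$ property is required only pointwise for $\lambda\notin\spectrum(\cL)$, so a small $\real{\nup{1}(\lambda)}>0$ is no obstacle. The paper's route treats $x$ and $y$ symmetrically and checks the whole jump of $\K$ at once; yours is the more economical of the two.

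\textbf{Correction 1: the basis identification.} \eqref{e:bases-ODE} is one basis with two names: $\Psip{j} = \Phim{j}$ and $\Psim{-j} = \Phip{-j}$ (see \eqref{e:Evans-basis}). Likewise $\dPsip{-j} = \dPhim{-j}$ and $\dPhip{j} = \dPsim{j}$ in \eqref{e:dual-bases-ODE}. So there is no mixing to track, and your coefficient identification needs exactly this. If $\Phim{j}$ had components along $\Psip{i}$ with $i \neq j$, then $\dPhip{k} S \Phim{j} \neq \ind{j=k}$.

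\textbf{Correction 2: the signs.} Your two multiplications cannot both return $+$.
\begin{enumerate}
\item Your relation has the jump reversed, and even as written, multiplying by $\dPsip{-k}S = \dPhim{-k}S$ gives $-b_k = \dphim{-k}$.
\item With the orientation of \eqref{e:jump-cond}, $[\K]_y = \K(y^+) - \K(y^-)$ (consistent with $[\partial_x K]_y = -\ell_2^{-1}$), the system is $\sum_j \Phip{-j} b_j - \sum_j \Phim{j} a_j = S(y)^{-1}\begin{pmatrix}\id[n\times n] \\ 0\end{pmatrix}$. This yields $b_k = \dphim{-k}$ but $a_k = -\dphip{k}$.
\end{enumerate}
A sanity check on the scalar model $\cL = \partial_{xx}$, $\lambda=\mu^2>0$ confirms this. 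The duality relations give $\dPhip{1} = -\frac{1}{2\mu}\re^{-\mu y}(1,-\mu)$. Hence for $x<y$, $\phim{1}(x)\dphip{1}(y) = -\frac{1}{2\mu}\re^{-\mu(y-x)}$, the negative of the true kernel.

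The paper's proof contains the same slip: its identity $\mathrm{diag}(M_-,-M_+) = \id$ forces $M_+ = -\id$. So the $x<y$ branch of \eqref{e:expression-K} should carry a minus sign. Alternatively, $\dPhip{j}$ should be normalized by $\dPhip{j} S \Psip{k} = -\ind{j=k}$.
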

\begin{proof}
We first prove that such a $K$ necessarily satisfies \eqref{e:expression-K}. When $x \neq y$, each column of $\K$ satisfies \eqref{e:1st-order-pb}, and thus decompose into the basis \eqref{e:bases-ODE}. Because $x\mapsto \K(\lambda, x, y)$ is bounded, this decomposition only involves the $\Phip{-j}(x)$ when $x \in (y, +\infty)$, and only the $\Phim{j}(x)$ when $x \in (-\infty, y)$. 
Thus, there exists $N_j(\lambda, y) \in \C^{2n\times 1}$ such that 
\begin{equation}
\label{e:expression-K-1}
\K(\lambda, x, y) = 
\begin{cases}
\sum_{j = 1}^n \Phim{j}(\lambda, x) \, N_{j}(\lambda, y),\hspace{4ex} & x<y,\\[1em]
\sum_{j = 1}^n \Phip{-j}(\lambda, x) \, N_{-j}(\lambda, y), & y<x.
\end{cases}
\end{equation}
From Lemma \ref{l:adjoint-kernel}, we see that when $y\neq x$, each row of $\K$ satisfies \eqref{e:1st-order-trans-pb}. Multiplying \eqref{e:expression-K-1} on the left by $\dPsim{j_0}(x)S(x)$, for a fixed $x$, we see that $N_{j_0}(\lambda,y)$ also satisfies \eqref{e:1st-order-trans-pb} for $y> x$, and thus decompose into the basis \eqref{e:dual-bases-ODE}. Because $y\mapsto \dPsim{j_0}(x)S(x) \K(\lambda,x,y)$ is bounded, this decomposition only involves the $\dPhip{j}(y)$. With similar argument for the $y<x$, case, we get that there exists coefficients $m_{j,k}(\lambda) \in \C$ such that
\begin{equation}
\label{e:expression-K-2}
\K(\lambda, x, y) = 
\begin{cases}
\sum_{j = 1}^n \Phim{j}(\lambda, x) \, m_{j,k}(\lambda) \, \dPhip{k}(\lambda, y),\hspace{4ex} & x<y,\\[1em]
\sum_{j = 1}^n \Phip{-j}(\lambda, x) \, m_{-j,-k}(\lambda) \, \dPhim{-k}(\lambda, y), & y<x.
\end{cases}
\end{equation}
To obtain \eqref{e:expression-K}, it remains to show that $m_{j,k}(\lambda) = \ind{j = k}$, and then to extract the top left $n\times n$ matrix from the $2n\times 2n$ equation \eqref{e:expression-K-2}.

To do so, we rewrite \eqref{e:expression-K-2} in a more compact form. Let $M_+ = (m_{j,k})_{1\leq j, k \leq n} \in \C^{n\times n}$, and $M_-$ defined similarly from the $m_{-j,-k}$. We further denote $\Phim{} \in \C^{2n\times n}$ the matrix whose columns are the $\Phim{j}$, $1\leq j\leq n$, and $\dPhip{} \in \C^{n\times 2n}$ the matrix whose rows are the $\dPhip{k}$, $1\leq k\leq n$. Then from  
\begin{equation*}
\Phim{j} \, m_{j,k} \, \dPhip{k} = (\Phim{} \, e_j) \, m_{j,k} \,  (\transpose{e_k} \, \dPhip{}) = \Phim{} (e_j m_{j,k} \transpose{e_k}) \dPhip{}, 
\end{equation*}
we see that
\begin{equation*}
\K(\lambda, x, y) = 
\begin{cases}
\Phim{}(\lambda, x) \, M_+(\lambda) \, \dPhip{}(\lambda, y),\hspace{4ex} & x<y,\\[1em]
\Phip{}(\lambda, x) \, M_-(\lambda) \, \dPhim{}(\lambda, y), & y<x.
\end{cases}
\end{equation*}
Thus, the jump condition \eqref{e:jump-cond} rewrites 
\begin{equation*}
\begin{pmatrix}
\Phip{} &\Phim{}
\end{pmatrix}
\begin{pmatrix}
M_- & 0 \\
0 & -M_+
\end{pmatrix}
\begin{pmatrix}
\dPhim{} \\ \dPhip{}
\end{pmatrix} = S^{-1}, 
\end{equation*}
or equivalently 
\begin{equation*}
\begin{pmatrix}
M_- & 0 \\
0 & -M_+
\end{pmatrix}
= 
\left(
\begin{pmatrix}
\dPhim{} \\ \dPhip{}
\end{pmatrix}
S
\begin{pmatrix}
\Phip{} &\Phim{}
\end{pmatrix}
\right)^{-1} = \id[2n\times 2n],
\end{equation*}
by construction of the dual basis. Thus \eqref{e:expression-K} is proved, and its right hand side is a holomorphic bounded solution to \eqref{e:resolvent-kernel-pb} by the previous construction.
\end{proof}

\subsubsection{Decomposition}
We now decompose $K$ into several holomorphic parts, each one selected according to its behavior at $\lambda = 0$. Recall that $\phim{1}(0, x) = \varOmega(x)^{-1} \uu'(x)$ from Lemma \ref{l:def-ODE-solution}.
\begin{definition}
For $\lambda \in \varLambda$, and $(x,y) \in \R^2$ we let
\begin{align*}
\ki(\lambda, x, y) & {} = \re^{-\nup{1}(\lambda) (y - x)} \, \ind{y > x},
\end{align*}
and
\begin{align*}
\Ki(\lambda, x, y) & {} = \pi \, \ki(\lambda, x, y) \\[1em]
\Kii(\lambda, x, y) & {} = \big(\pip(\lambda) - \pi\big) \ki(\lambda, x, y), \\[1em]
\Kiii(\lambda, x, y) & {} = \psip{1}(\lambda, x) \dphip{1}(\lambda, y) \, \ind{y > x} - \pip(\lambda) \ki(\lambda, x, y), \\[1em]
\Kiv(\lambda, x, y)  & {} = K(\lambda, x, y) - \bigg(\Ki(\lambda, x, y) + \Kii(\lambda, x, y) + \Kiii(\lambda, x,y)\bigg).
\end{align*}
\end{definition}

\subsection{Bounds}
\label{ss:bound-resolvent-kernel}
\begin{proposition}[Close to the origin]
\label{p:small-lambda}
Recall the expression \eqref{e:def-rate} of $r$ and let $m>0$ from Lemma \ref{l:spatial-eigenvalues}. There exists $C>0$ such that for almost every $\lambda \in B(0,m)$, the following holds.
For all $(x,y)\in \R^2$,
\begin{align*}
\absolute{\Kii(\lambda, x, y)} & {} \leq C \absolute{\lambda}\re^{-\real{\nup{1}(\lambda)} (y - x)} \ind{y > x}, \\
\absolute{\Kiii(\lambda, x, y)} & {} \leq C \re^{-\real{\nup{1}(\lambda)} (y - x)} \bigg(\min\left(1, \re^{-\frac{\kappa}{2} x}\right) + \min\left(1, \re^{-\frac{\kappa}{2} y}\right) \bigg) \ind{y > x}, \\
\absolute{\Kiv(\lambda, x, y)} & {} \leq C \re^{-r(\lambda) \absolute{x-y}}.
\end{align*}
Furthermore, for all $x < y < 0$
\begin{equation*}
\absolute{\Ki(\lambda, x, y)} + \absolute{\Kiii(\lambda, x, y)} + \absolute{\Kiii(\lambda, x, y)} \leq C \re^{-r(\lambda) \absolute{x-y}}.
\end{equation*}
\end{proposition}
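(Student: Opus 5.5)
The plan is to bound each piece directly from the explicit formula \eqref{e:expression-K} for $K$, together with the asymptotic description of the solutions in Lemmata \ref{l:ODE-behavior}, \ref{l:def-ODE-solution} and \ref{l:def-dual-ODE-solution}. Throughout, $\lambda\in B(0,m)$ is kept away from the discrete set \eqref{e:non-Jordan-set}. This is why the statement holds only for almost every $\lambda$. On this set all spatial eigenvalues are simple, and every asymptotic vector $V_j^\pm(\lambda)$ is a holomorphic eigenvector.

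\textbf{Bound on $\Kii$.} Since $\pip(\lambda)$ is holomorphic by Lemma \ref{l:projection} and $\pip(0)=\pi$, we have $|\pip(\lambda)-\pi|\le C|\lambda|$ on $B(0,m)$. Multiplying by $|\ki|=\re^{-\real{\nup{1}}(y-x)}\ind{y>x}$ gives the claim immediately.

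\textbf{Bound on $\Kiii$.} The key step is the asymptotic expansion of $\psip{1}$ and $\dphip{1}$ at $+\infty$. By Lemma \ref{l:ODE-behavior}, for $x\ge0$ we have
\[
\Psip{1}(\lambda,x)=\re^{\nup{1}x}V_1^+(\lambda)+\cO(\re^{(\real{\nup{1}}-\frac\kappa2)x}),
\]
and dually
\[
\dPhip{1}(\lambda,y)=\re^{-\nup{1}y}\dual V_1^+(\lambda)+\cO(\re^{-(\real{\nup{1}}+\frac\kappa2)y}).
\]
The duality relation \eqref{e:duality-relation}, evaluated as $x\to+\infty$ with $S\to S^+$, shows that the product of the first $n$ components of the leading terms is exactly $\pip(\lambda)$. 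This is the normalisation $\dvp S^+\vp=1$ of Lemma \ref{l:projection}, written in companion form. Hence, for $0<x<y$, the expansion of $\psip{1}(x)\dphip{1}(y)-\pip\re^{-\nup{1}(y-x)}$ has only cross terms, each bounded by $\re^{-\real{\nup{1}}(y-x)}(\re^{-\frac\kappa2x}+\re^{-\frac\kappa2y})$.

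For $x<0<y$, or $x<y<0$, the remaining task is to bound $\psip{1}(x)$. I will write $\Psip{1}$ in the $-$ basis via \eqref{e:bases-ODE}. Near $\lambda=0$ it equals $\Phim{1}$ by Lemma \ref{l:def-ODE-solution}, which is bounded on $\R_-$ with rate $\real{\num1}\ge r>0$, so $|\psip{1}(x)|\le C$ there. Similarly, $\dphip{1}(y)$ on $y<0$ is controlled through the decomposition of $\dPhip1$ into the $-$ dual basis. The factor $\re^{-\real{\nup1}(y-x)}$ is of size one when $|x-y|$ is bounded, and is absorbed by the minima otherwise, because $\real{\nup1}$ is small on $B(0,m)$.

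\textbf{Bound on $\Kiv$.} For $y<x$, $K=\sum\phip{-j}\dphim{-j}$, and every factor carries a rate in $N_s$. Lemma \ref{l:spatial-eigenvalues} (i) and the growth rates in Lemmata \ref{l:def-ODE-solution} and \ref{l:def-dual-ODE-solution} then give $\re^{-r|x-y|}$, after splitting according to the signs of $x$ and $y$ and using that the rates $\eta^\pm$ of the $\Psi$ and $\dual\Psi$ functions exceed $r$. For $x<y$, I split $K=\phim1\dphip1+\sum_{j\ge2}\phim j\dphip j$. The sum with $j\ge2$ involves only $N_u$ rates and gives $\re^{-r|x-y|}$ in the same way. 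The $j=1$ term equals $\psip1\dphip1$ because of the identification $\Phim1=\Psip1$, and it is exactly cancelled by $\Ki+\Kii+\Kiii$.

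\textbf{Final estimate for $x<y<0$.} Here $\Ki+\Kii+\Kiii=\psip1\dphip1$. Its bound comes from the left-side rates: on $\R_-$, $\dphip1$ decays at rate at least $r$ relative to the growth of $\psip1$, by the $-$ expansion of $\dPhip1$ and the rates in Lemma \ref{l:def-dual-ODE-solution}. This gives $\re^{-r|x-y|}$. Since $K$ and $\Kiv$ already satisfy this bound, so does $\Ki+\Kiii$. The bound on $\Ki$ alone on $x<y<0$ is trivial, because $\real{\nup1}$ is small and $|x-y|\le|y-x|$ is handled by adjusting the constant in $r$.

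\textbf{Main obstacle.} I expect the hardest step to be controlling $\dphip{1}(\lambda,y)$ for $y<0$ and the mixed-sign cases uniformly in $\lambda$. This requires tracking how $\dPhip1$ decomposes on $\R_-$. My first attempt would be to use the duality relations \eqref{e:duality-relation} on $\R_-$ against the $-$ basis, which forces its expansion onto growing dual modes at $-\infty$.
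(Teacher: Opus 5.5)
The bounds on $\Kii$, $\Kiii$ and $\Kiv$ follow the paper's proof: holomorphy of $\pip$, the $\cO$-remainders of Lemma~\ref{l:ODE-behavior}, and the explicit formula \eqref{e:expression-K} with the term $\phim{1}\dphip{1}=\psip{1}\dphip{1}$ removed. Your bound on $\psip{1}\dphip{1}\,\ind{y>x}$ for $x<y<0$ is exactly \eqref{e:bound-K-II} with $j=1$.

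The step that fails is your last one. There you claim that $\Ki$ alone, and hence each of the three terms separately, is bounded by $C\re^{-r(\lambda)\absolute{x-y}}$ on $x<y<0$. This is false. One has $\absolute{\Ki(\lambda,x,y)}=\absolute{\pi}\,\re^{-\real{\nup{1}(\lambda)}(y-x)}$. Here $\real{\nup{1}(\lambda)}$ vanishes at $\lambda=0$ and is negative inside the essential spectrum, while $r(\lambda)\geq C>0$ uniformly. For fixed small $\lambda$, take $y=-1$ and $x\to-\infty$: then $\absolute{\Ki}\,\re^{r(\lambda)(y-x)}$ is unbounded. So the smallness of $\real{\nup{1}}$ is exactly what prevents the bound, and no adjustment of the constant in $r$ helps.

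The same failure occurs for $\Kii$. It also occurs for $\Kiii=\psip{1}\dphip{1}-\pip\ki$ taken alone, since only the first piece decays at rate $\real{\num{1}}\geq r$. What is true, and all the paper's proof establishes, is the bound on the sum $\Ki+\Kii+\Kiii=\psip{1}\dphip{1}\,\ind{y>x}$. The left-hand side of the last display has to be read as $\absolute{\Ki+\Kii+\Kiii}$. Relatedly, the identity you want is $\Ki+\Kii+\Kiii=K-\Kiv$, not a statement about $\Ki+\Kiii$.

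A secondary point concerns your justification of the cancellation in $\Kiii$. The duality relation gives $\dual{V}_1^+S^+V_1^+=1$. With $V_1^+=\transpose{(\vp,\ \nup{1}\vp)}$ and $\dual{V}_1^+=(\dvp,\ -\nup{1}\dvp)$, this reads $-\dvp\,(\partial_\nu\cL^+)[\nup{1}]\,\vp=1$, not $\dvp\vp=1$. Hence the leading coefficient of $\psip{1}(x)\dphip{1}(y)$ is $-\partial_\lambda\nup{1}(\lambda)$ times the rank-one spectral projection of Lemma~\ref{l:projection}, not that projection itself; at $\lambda=0$ the factor is $-1/\sigma$.

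Your two uses of $\pip$ are then compatible only under a specific normalization. The cancellation in $\Kiii$ and the identity $\pip(0)=\pi$ used for $\Kii$ both hold if you \emph{define} $\pip$ as this leading coefficient and set $\pi\deq\pip(0)$. That $\pi$ is a nonzero multiple of the spectral projection, which is harmless downstream. The paper makes this identification tacitly, but you attribute the normalization to Lemma~\ref{l:projection}, which does not provide it.
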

\begin{proof}
The bound on $\Kii$ is direct using smoothness of $\lambda \mapsto \pip(\lambda)$. From the definition of $\pip$, $\nup{1}$, $\vp$ and $\dvp$,
\begin{align}
\label{e:bound-Kii} \Kiii(\lambda, x, y) = {}& \bigg(\psip{1}(\lambda, x) \dphip{1}(\lambda, y) - \vp(\lambda) \dvp(\lambda) \re^{-\nup{1}(\lambda)(y - x)}\bigg) \ind{y>x}
\end{align}
is a sum of $\cO$ terms in \eqref{e:ODE-behavior}, hence contributing to $x$ or $y$ exponential decay at $+\infty$ as claimed.
We now turn to the estimate on $\Kiv$, and recall from Lemma \ref{l:def-ODE-solution} that $\psip{1} = \phim{1}$. Thus
\begin{align}
\label{e:expr-Kiv}
\Kiv(x, y) =
\begin{cases}
\sum_{j = 2}^n \phim{j}(x) \, \dphip{j}(y), \hspace{4ex} & x<y, \\[1em]
\sum_{j = 1}^n \phip{-j}(x) \, \dphim{-j}(y), & y<x.
\end{cases}
\end{align}
We demonstrate how to bound terms that present a $\ind{x<y}$, the procedure adapts for the other ones. Remark that the rest of the proof is valid for all $\lambda \in \varLambda$. To bound each $\phi$ and $\dual{\phi}$, we rely on Lemmas \ref{l:def-ODE-solution} and \ref{l:def-dual-ODE-solution} together with Lemma \ref{l:spatial-eigenvalues}. We avoid the finite amount of $\lambda$ giving rise to Jordan block in the matrix $A$, see Assumption \ref{a:point-spectrum}, and obtain an estimate on the $\phi$ and $\dual{phi}$ from their exponential rate: there is no deterioration by a polynomial factor.
\begin{itemize}
\item $x < 0 < y$: for all $j \in \Set{2, \dots, n}$, we use $\real{\num{j}(\lambda)} \geq r(\lambda)$
\begin{equation*}
\absolute{\phim{j}(x) \, \dphip{j}(y)} \leq C \, \re^{\real{\num{j}(\lambda)} x} \, \re^{-r(\lambda) y} \leq C \, \re^{r(\lambda)(x-y)}.
\end{equation*}
\item $0 < x < y$: for all $j \in \Set{2, \dots, n}$, we use that $\eta^+(\phim{j}) = \eta^+(\psip{j}) = - \eta^+(\dphip{j})$ to get
\begin{equation*}
\absolute{\phim{j}(x) \, \dphip{j}(y)} \leq C \, \re^{\eta^+(\phim{j}) (x-y)} \leq C \, \re^{r(\lambda)(x-y)}.
\end{equation*}
\item $x < y < 0$: for all $j \in \Set{2, \dots, n}$, we have $\eta^-(\dphip{j}) = - \eta^-(\psip{j}) = - \eta^-(\phim{j}) = -\real{\num{j}}$. Thus
\begin{equation}
\label{e:bound-K-II}
\absolute{\phim{j}(x) \, \dphip{j}(y)} \leq C \, \re^{\real{\num{j}(\lambda)} (x - y)} \leq C \, \re^{r(\lambda)(x-y)}.
\end{equation}
\end{itemize}
We conclude with the improved bound on $\Ki$, $\Kii$ and $\Kiii$ when both $x$ and $y$ are negative. Remark that \eqref{e:bound-K-II} also holds true with $j = 1$. It provides the claimed bound.
\end{proof} 

\begin{proposition}[Away from the origin]
\label{p:large-lambda}
Let $m>0$ from Lemma \ref{l:spatial-eigenvalues}. For almost every $\lambda \in \varLambda \backslash  B(0,m)$, and for all $(x,y)\in \R^2$, 
\begin{equation*}
\absolute{K(\lambda, x, y)} \leq C\re^{-r(\lambda) \absolute{x-y}}.
\end{equation*}
\end{proposition}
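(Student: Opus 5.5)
The plan is to reuse the case analysis already carried out for $\Kiv$ in the proof of Proposition \ref{p:small-lambda}, which (as remarked there) is valid for all $\lambda \in \varLambda$. The difference is that away from the origin the term $j=1$ no longer has to be treated separately: by Lemma \ref{l:spatial-eigenvalues}, estimate \eqref{e:spectral-gap-2}, for $\lambda \in \varLambda\backslash B(0,m)$ we have $\real{\nup{1}(\lambda)} \geq r(\lambda)$. So $\nup{1}$ behaves like the elements of $N_u$, and every spatial eigenvalue is separated from the imaginary axis by at least $r(\lambda)$.

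First, we write $K$ through the expression \eqref{e:expression-K}. For $x<y$ it is $\sum_{j=1}^n \phim{j}(x)\dphip{j}(y)$, and for $y<x$ it is $\sum_{j=1}^n \phip{-j}(x)\dphim{-j}(y)$. We then use the growth rates from Lemmas \ref{l:def-ODE-solution} and \ref{l:def-dual-ODE-solution}, avoiding the discrete set of Jordan points given by Assumption \ref{a:point-spectrum}; this is why the statement holds only for almost every $\lambda$. On this set, solutions with a given rate are bounded by the corresponding pure exponential, with no polynomial loss.

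For $x<y$, we split into the three regimes $x<0<y$, $0<x<y$ and $x<y<0$, exactly as in the proof of Proposition \ref{p:small-lambda}, now including $j=1$.
\begin{enumerate}
\item For $x<0<y$, $\phim{j}$ decays at $-\infty$ at rate $\real{\num{j}} \geq r(\lambda)$. The dual $\dphip{j}$ has rate $-\eta^+(\psip{j})$, which is $\leq -r(\lambda)$ for $j\geq 2$ by Lemma \ref{l:def-ODE-solution}, and equals $-\real{\nup{1}} \leq -r(\lambda)$ for $j=1$ by \eqref{e:spectral-gap-2}.
\item For $0<x<y$, we use $\phim{j}=\psip{j}$ and the duality $\eta^+(\dphip{j}) = -\eta^+(\psip{j})$. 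This gives a bound by $\re^{\eta^+(\psip{j})(x-y)} \leq \re^{r(\lambda)(x-y)}$, again including $j=1$ thanks to \eqref{e:spectral-gap-2}.
\item For $x<y<0$, we argue as in \eqref{e:bound-K-II}.
\end{enumerate}
The case $y<x$ is symmetric and uses the rates of the $N_s$ eigenvalues, $\real{\nu_s}\leq -r(\lambda)$.

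The main obstacle is uniformity of the constant $C$ as $\absolute{\lambda}\to\infty$. The exponential bounds on $\phi$ and $\dual{\phi}$ come from Lemma \ref{l:ODE-behavior}, which is local in $\lambda$, and their prefactors, as well as the normalisation implicit in the duality \eqref{e:duality-relation}, could a priori degenerate for large $\absolute{\lambda}$. To handle this, I will treat $\absolute{\lambda}\leq M$ by compactness: finitely many neighbourhoods, with the Jordan points excluded. For $\absolute{\lambda}\geq M$, I will rescale $x\mapsto \absolute{\lambda}^{1/2}x$, as in part 1 of the proof of Lemma \ref{l:spatial-eigenvalues}. In these variables the coefficients $\ell_1,\ell_0$ become perturbations of size $\absolute{\lambda}^{-1/2}$ of the constant operator $\frac{\lambda}{\absolute{\lambda}}-\ell_2\partial_{xx}$, whose Green kernel is explicitly bounded by $C\absolute{\lambda}^{-1/2}\re^{-c\absolute{\lambda}^{1/2}\absolute{x-y}}$. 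I will apply a perturbative (Neumann series or fixed point) argument to the rescaled kernel, in the spirit of Lemma \ref{l:ODE-behavior}, to obtain $\absolute{K}\leq C\re^{-r(\lambda)\absolute{x-y}}$ uniformly. Matching the uniqueness statement of \eqref{e:expression-K} then identifies this kernel with $K$.
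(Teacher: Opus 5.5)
Your proposal is correct and is essentially the paper's proof. The paper bounds $\Kiv$ by the case analysis already carried out in Proposition \ref{p:small-lambda}. It bounds $\Ki$, $\Kii$, $\Kiii$, whose sum is exactly your $j=1$ term $\psip{1}(\lambda,x)\dphip{1}(\lambda,y)\ind{y>x}$, by the same argument together with $\real{\nup{1}(\lambda)}\geq r(\lambda)$ from \eqref{e:spectral-gap-2}, avoiding spatial Jordan points just as you do. The one addition is your high-frequency rescaling and Neumann-series step to keep $C$ uniform as $\absolute{\lambda}\to\infty$; the paper does not address this and implicitly treats the constants coming from Lemma \ref{l:ODE-behavior} as uniform, so your step is a sound supplement rather than a change of route.
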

\begin{proof}
We restrict to points $\lambda$ with no spatial Jordan block, so that the bounds on $\phi$ and $\dual{\phi}$ still hold.
The bound on $\Ki$, $\Kii$ and $\Kiii$ follow the same argument as in Proposition \ref{p:small-lambda}, together with $\real{\nup{1}(\lambda)} \geq r(\lambda)$ from Lemma \ref{l:spatial-eigenvalues}.
The estimates on $\Kiv$ is already made in the proof of Proposition \ref{p:small-lambda}.
\end{proof}

\section{Linear estimates}

\subsection{Green kernel bounds in weighted space}
We obtain the Green kernel $G$ associated to the parabolic equation $\partial_t u = \cL u$ through inverse Laplace transform. Let $\varGamma = \kappa + \ri\R$ for some large enough $\kappa > 0$, then 
\begin{equation*}
G(t, x, y) = \frac{1}{2i\pi} \int_\varGamma \re^{\lambda t} K(\lambda, x, y) \d \lambda.
\end{equation*}
We define accordingly $\gi$ to $\Giv$ from the resolvent kernels $\ki$ to $\Kiv$, so that
\begin{equation*}
G(t, x, y) = \gi(t,x,y) \pi + \Gii(t,x,y) + \Giii(t,x,y) + \Giv(t,x,y).
\end{equation*}
From the above, all resolvent kernels are analytic in the simply connected region $\varLambda$, so that the path $\varGamma$ can be homotopically deformed in this region. Extra care should be taken when moving its  \guillemet{endpoints}. We show in appendix \ref{s:appendix} that inverse Laplace transform remains valid in the current setting when $\varGamma$ is replaced by a sectorial path. To ensure that estimates from the previous section apply, we also want $\varGamma$ to avoid points with spatial Jordan blocks. This is always possible at no cost because they are isolated and away from $0$.
\newline 

We now define $\varGamma$ as a simple sectorial path. Other choices are possible, see \cite[p. 816]{Howard-Zumbrun-98}. For non negative constants $\alpha$ and $\varXi$ let
\begin{equation*}
\begin{aligned}
\varGamma_0 \deq {} & \Set{\gamma(\xi) \deq  \alpha_1 \ri (\xi - \xi_0) - \alpha (\xi - \xi_0)^2: \xi \in [\xi_0 - \varXi, \xi_ + \varXi]}, \\[1em]
\varGamma_\infty \deq {} & \Set{\gamma(\xi) \deq \alpha_1 \ri (\xi - \xi_0) - \alpha {\varXi}^2 - \frac{\theta}{2} \absolute{\xi - \xi_0 - \varXi} : \xi \in \R \backslash (\xi_0 - \varXi, \xi_0 + \varXi)}.
\end{aligned}
\end{equation*}
and $\varGamma \deq \varGamma_0 \cup \varGamma_\infty$.

\begin{lemma}
\label{l:Taylor-nup1}
Fix $\alpha \in \left[0, \frac{\real{\alpha_2}}{2}\right)$. Then there exists $\varXi>0$ such that $\varGamma \cap \spectrum(\cL) = \Set{0}$ and $\varGamma_0 \subset B(0, m)$, with $m$ as in Proposition \ref{p:small-lambda}. Furthermore, there exists $\beta_0 \in \C$ with $\real{\beta_0} = 0$, $\sigma \in (0, +\infty)$ and $\beta_2 \in \C$ with $\real{\beta_2}>0$ such that
\begin{equation*}
\nup{1}(\gamma(\xi)) = \beta_0 + \frac{\alpha_1}{\sigma}\ri (\xi - \xi_0) + \beta_2 (\xi - \xi_0)^2 + \Bigo_{\xi \to \xi_0}(\big(\xi - \xi_0\big)^3).
\end{equation*}
\end{lemma}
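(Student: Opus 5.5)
We first establish the Taylor expansion of $\nup{1}$ along the curve $\gamma$, and then use it to choose $\varXi$. Lemma \ref{l:spatial-eigenvalues} says that $\nup{1}(0)$ is a simple root of $d^+(0,\cdot)$. The implicit function theorem therefore gives a holomorphic map $\lambda \mapsto \nup{1}(\lambda)$ near $0$, and we set $\beta_0 \deq \nup{1}(0)$ and $\sigma^{-1} \deq \partial_\lambda \nup{1}(0) \in (0,+\infty)$. Both properties, $\real{\beta_0}=0$ and $\sigma > 0$, were obtained in Lemma \ref{l:spatial-eigenvalues}.

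To compute $\beta_2$ we invert the essential spectrum relation. The relabelling between the $\cL^+[\ri\xi]$ symbol and the spatial eigenvalue $\nu$ gives $d^+(\lambda_1^+(\xi), \nu(\xi))=0$ with $\nu(\xi) = \beta_0 + \ri(\xi-\xi_0)$, up to the shift by $-\kappa$ absorbed in the weight. Hence, on the branch selected by the single marginal point assumption,
\begin{equation*}
\nup{1}\big(\lambda_1^+(\xi)\big) = \beta_0 + \ri(\xi - \xi_0).
\end{equation*}
Writing $\nup{1}(\lambda) = \beta_0 + \lambda/\sigma + c_2\lambda^2 + \cO(\lambda^3)$ and inserting \eqref{e:marginal-lambda-expansion}, we identify coefficients. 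At first order, $\alpha_1/\sigma = 1$, so $\sigma = \alpha_1$. At second order, $-\alpha_2/\sigma - c_2\alpha_1^2 = 0$, so $c_2 = -\alpha_2/\alpha_1^3$. We then substitute $\lambda = \gamma(\xi) = \ri\alpha_1 s - \alpha s^2$ with $s = \xi - \xi_0$ and get
\begin{equation*}
\nup{1}(\gamma(\xi)) = \beta_0 + \frac{\alpha_1}{\sigma}\ri s + \Big(-\frac{\alpha}{\sigma} + \frac{\alpha_2}{\sigma}\Big)s^2 + \cO(s^3)
\end{equation*}
after using $c_2\alpha_1^2 = -\alpha_2/\sigma$. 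This gives $\beta_2 = (\alpha_2 - \alpha)/\sigma$, so $\real{\beta_2} = (\real{\alpha_2} - \alpha)/\sigma > 0$. The condition $\alpha < \real{\alpha_2}/2$ is more than what this step needs; the margin will presumably be used later.

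For the choice of $\varXi$, we take it small enough that $\absolute{\gamma(\xi)} \leq C\varXi < m$ on $\varGamma_0$, so that $\varGamma_0 \subset B(0,m)$. We also require that the cubic remainder is dominated, namely $\real{\nup{1}(\gamma(\xi))} \geq \tfrac12\real{\beta_2}\, s^2 > 0$ for $0 < \absolute{s} \leq \varXi$. Since $\lambda \in \spectrum(\cL)$ near $0$ only if some $\nup{}$ is purely imaginary, and only $\nup{1}$ approaches $\ri\R$ there, this shows that $\varGamma_0$ meets the spectrum only at $\xi = \xi_0$, i.e.\ at $\lambda = 0$.

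On $\varGamma_\infty$ the path lies strictly to the left of $-\alpha\varXi^2$, except at the junction points where it coincides with $\varGamma_0$. The real part of the path decreases linearly in $\absolute{\imag{\lambda}}$ with slope $\theta/2$. The $\lambda_k^-$ and $\lambda_k^+$ curves with $k\neq 1$ have real part at most $-\eta$ by the spectral gap assumption. Shrinking $\theta$ if necessary, these are avoided, since the path stays inside $\varLambda$ and also to the right of these curves.

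The main obstacle is the branch $\lambda_1^+$. It is tangent to the parabola $\real\lambda = -\real{\alpha_2}\,(\imag\lambda/\alpha_1)^2$, whereas $\varGamma$ bends only like $-\alpha(\cdot)^2$ with $\alpha < \real{\alpha_2}$. Near the junction the path therefore lies to the right of the curve, and it must continue to do so for all larger $\absolute{\imag\lambda}$. Here we would use spectral gap 1 ($\real{\lambda_1^+} \leq -\eta$ for $\absolute{\xi-\xi_0} \geq \varXi$) together with the expansion near $\xi_0$. The first step is to choose $\varXi$ small, so that $\alpha\varXi^2 < \eta$. Then, with $\theta$ small, check that the line segment of $\varGamma_\infty$ remains right of $\lambda_1^+$ in the intermediate range where the quadratic Taylor expansion of $\lambda_1^+$ is no longer accurate. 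This gap between the two regimes is the delicate point, and we would handle it by taking $\varXi$ strictly smaller than the constant in Assumption \ref{a:essential-spectrum}.
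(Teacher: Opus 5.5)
Your computation of the expansion is correct, and it takes a different and sharper route than the paper. The paper Taylor-expands $\nup{1}\circ\gamma$ with $(\nup{1})''(0)$ left unknown. It then argues that, because $\gamma(\xi)$ lies off the spectrum, $\real{\nup{1}(\gamma(\xi))}\geq 0$, whence $\real{\beta_2}>0$. That argument presupposes the spectral avoidance, and strictly it only yields $\real{\beta_2}\geq 0$.

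You instead invert the dispersion relation along the marginal curve, $\nup{1}(\lambda_1^+(\xi))=\beta_0+\ri(\xi-\xi_0)$, which is legitimate since $\nup{1}(0)$ is a simple root. This pins down $\sigma=\alpha_1$, $(\nup{1})''(0)=-2\alpha_2/\alpha_1^3$ and $\beta_2=(\alpha_2-\alpha)/\sigma$. You get strict positivity for every $\alpha<\real{\alpha_2}$, and the logic runs in the right direction: positivity of $\real{\nup{1}(\gamma(\xi))}$ for $0<\absolute{\xi-\xi_0}\leq\varXi$ is what shows that $\varGamma_0\setminus\{0\}$ avoids the spectrum.

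One wording fix. Near $0$, a spectral $\lambda$ need not have a purely imaginary spatial eigenvalue: the region to the left of the $\lambda_1^+$ curve, where $\real{\nup{1}}<0$, is essential spectrum. What you actually use is that $\real{\nup{1}(\lambda)}>0$ together with Lemma \ref{l:spatial-eigenvalues}(i) gives Fredholm index zero, and that the Evans function does not vanish on $\varLambda$.

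The avoidance on $\varGamma_\infty$, however, is not proved. The remedy you suggest, taking $\varXi$ below the constant of Assumption \ref{a:essential-spectrum}, does not address the actual constraint. The paper disposes of this step in one sentence, so you are not missing an idea it supplies. But the binding difficulty sits at the junction, inside the quadratic regime, not in the intermediate range.

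Write $\imag{\lambda}=\alpha_1(\varXi+h)$ with $h\geq 0$. The path has $\real{\lambda}=-\alpha\varXi^2-\frac{\theta}{2}h$, while the curve has $\real{\lambda}\approx-\real{\alpha_2}(\varXi+h)^2$. The margin is therefore $(\real{\alpha_2}-\alpha)\varXi^2+(2\real{\alpha_2}\varXi-\frac{\theta}{2})h+\real{\alpha_2}h^2$. It turns negative for some small $h$ as soon as $\theta$ exceeds a fixed multiple of $\varXi$, and the path then enters the essential spectrum.

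Hence you need $\theta\lesssim\real{\alpha_2}\varXi$. Your ordering ($\varXi$ first, then $\theta$) is the right one, and it is the opposite of the paper's ``$\varXi$ small depending on $\theta$''. But $\theta$ cannot be shrunk freely either: keeping the junction inside $\varLambda$ requires $\theta\gtrsim\alpha\varXi^2$. The paper invokes $\alpha<\real{\alpha_2}/2$ for this avoidance rather than for $\beta_2$; it leaves room to absorb the cubic remainder of $\lambda_1^+$.

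Beyond the quadratic regime, you need a uniform bound $\real{\lambda_1^+(\xi)}\leq-\eta'<0$ on the compact range up to the constant of Assumption \ref{a:essential-spectrum}. This follows by continuity from $\xi_0$ being the only marginal point, which the assumption implies but does not literally state. For large $\absolute{\lambda}$, it is $\varGamma\subset\varLambda$ together with Lemma \ref{l:spatial-eigenvalues} that keeps the path off all spectral curves.
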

\begin{proof}
By taking $\varXi$ sufficiently small (depending on $\theta$), Assumptions \ref{a:essential-spectrum} and \ref{a:point-spectrum} ensures that $\varGamma\backslash \Set{0}$ does not intersect $\spectrum(\cL)$. Shrinking $\varXi$ if necessary, we get $\varGamma_0 \subset B(0, m)$. Taylor expanding $\nup{1}\circ \gamma$ at $\xi_0$ we get for any $\alpha \geq 0$:
\begin{equation*}
\nup{1}(\gamma(\xi)) - \nup{1}(0) = \ri \alpha_1 (\xi - \xi_0) \, {\nup{1}}'(0) - (\xi - \xi_0)^2 \left(\alpha \, (\nup{1})'(0) + {\alpha_1}^2 \frac{{\nup{1}}''(0)}{2}\right) + \bigo\big((\xi-\xi_0)^3\big).
\end{equation*}
We set $\beta_0 = \nup{1}(0)$ and $\frac{1}{\sigma} = (\nup{1})'(0)$, the latter being real and positive from Lemma \ref{l:spatial-eigenvalues}.
When $\alpha < \frac{\real{\alpha_2}}{2}$ and $\varXi$ is small enough, $\gamma(\xi)$ is outside the spectrum, thus $\real{\nup{1}(\gamma(\xi))} \geq 0$ and 
\begin{equation}
\label{e:Tailor-nup1-bound}
\real{\beta_2} = - \alpha \, (\nup{1})'(0) - {\alpha_1}^2 \frac{\real{(\nup{1})''(0)}}{2} > 0.
\end{equation}
\end{proof}

\begin{proposition}
\label{p:green-kernel-bounds-1}
There exists $\kappa>0$ and $C>0$ such that, denoting 
\begin{align}
\nonumber 
g(t, x, y) = {}& \frac{1}{\sqrt{t + y - x}}\exp\left(-\kappa \frac{(y - x - \sigma t)^2}{t + y - x}\right), 
\end{align}
the following estimates hold for all $(x, y) \in \R^2$ and $t \geq 0$:
\begin{align*}
\absolute{\gi(t,x,y)} \leq {}& C \, g(t, x, y) \, \ind{y > x}, \\[1em]
\absolute{\Gii(t,x,y)} \leq {}& C \min\left(1, \frac{1}{\sqrt{t + y - x}}\right) g(t, x, y) \, \ind{y > x}, \\[1em]
\absolute{\Giii(t,x,y)} \leq {}& C \, g(t, x, y) \, \bigg( \exp^{0, -\kappa}(x) + \exp^{0, -\kappa}(y)\bigg) \, \ind{y > x}, \\[1em]
\absolute{\Giv(t,x,y)} \leq {}& C \, \re^{-\kappa t} \re^{-\kappa \absolute{x-y}}.
\end{align*}
Furthermore for all $x < y < 0$ and $t \geq 0$, 
\begin{equation*}
\absolute{\Gi(t,x,y)} + \absolute{\Gii(t,x,y)}  + \absolute{\Giii(t,x,y)} \leq C \, \re^{-\kappa t} \re^{-\kappa \absolute{x-y}}.
\end{equation*}
\end{proposition}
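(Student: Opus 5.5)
Our plan is to estimate each Green kernel piece by inverse Laplace transform along the sectorial contour $\varGamma=\varGamma_0\cup\varGamma_\infty$ of Lemma \ref{l:Taylor-nup1}, deforming it differently depending on the piece and on the relative sizes of $t$ and $y-x$. We may do so because Appendix \ref{s:appendix} justifies the deformation and the Jordan points are isolated and avoided. We treat $\gi$ first, since it is a scalar kernel with explicit exponent $\re^{\lambda t-\nup{1}(\lambda)(y-x)}$. The other three pieces then follow from the resolvent bounds of Propositions \ref{p:small-lambda} and \ref{p:large-lambda}.

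\textbf{Step 1: the scalar kernel $\gi$.} On $\varGamma_\infty$, and more generally outside $B(0,m)$, we use Lemma \ref{l:spatial-eigenvalues}: $\real{\nup{1}}\geq r(\lambda)$ and $\real\lambda\leq -c-\tfrac\theta2|\imag\lambda|$. This gives a contribution $\leq C\re^{-\kappa t}\re^{-\kappa(y-x)}\ind{y>x}$, which is dominated by $g$ for small $\kappa$, since $g\gtrsim \re^{-C(t+y-x)}$ at worst up to the $\sqrt{\cdot}$ prefactor. When $t$ is small, i.e.\ $t\lesssim 1$ with $y-x$ arbitrary, we use the parabolic large-$|\lambda|$ regime: $\nup{1}\sim c\sqrt\lambda$. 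We shift the contour to $\real\lambda=(y-x)^2/t$-type saddle levels as in \cite{Howard-Zumbrun-98}, which yields the heat-kernel bound $t^{-1/2}\re^{-(y-x)^2/(Ct)}$. This is $\lesssim g$ in that regime. For $\varGamma_0$ we parametrize by $\xi$ and use Lemma \ref{l:Taylor-nup1}. The phase is
\begin{equation*}
\lambda t-\nup{1}(\lambda)(y-x)=-\beta_0(y-x)+\ri\alpha_1(\xi-\xi_0)\Big(t-\tfrac{y-x}{\sigma}\Big)-(\xi-\xi_0)^2\big(\alpha t+\beta_2(y-x)\big)+\cO\big((\xi-\xi_0)^3(t+y-x)\big).
\end{equation*}
Since $\real\beta_0=0$, this is a Fourier integral with Gaussian damping of strength $\gtrsim(t+y-x)$. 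We then do the standard stationary phase with contour shift $\xi\mapsto\xi+\ri\,\mathrm{sign}\cdot\mu$, where $\mu\approx|y-x-\sigma t|/(t+y-x)$ is capped by a small constant. This produces the factor $(t+y-x)^{-1/2}\exp(-\kappa(y-x-\sigma t)^2/(t+y-x))$, i.e.\ exactly $g$, after rescaling $\sigma$ by $1/\sigma$ as appropriate. The contour shift must stay within $\varLambda\cap B(0,m)$ and keep the endpoints' contributions exponentially small. When the cap is active, $|y-x-\sigma t|\gtrsim t+y-x$, and the cap only gives $\re^{-c(t+y-x)}$, which is again $\lesssim g$.

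\textbf{Step 2: $\Gii,\Giii,\Giv$.} For $\Gii$ the resolvent has an extra factor $\pip(\lambda)-\pi=\cO(\lambda)$. In the saddle computation this yields an extra $|\xi-\xi_0|$, hence the gain $(t+y-x)^{-1/2}$ by the usual moment argument. Alternatively, $\lambda\re^{\lambda t}=\partial_t\re^{\lambda t}$ gives the same gain, and the trivial bound $1$ covers short times. For $\Giii$ we note that $\psip{1}\dphip{1}-\vp\dvp\re^{-\nup{1}(y-x)}$ equals $\re^{-\nup{1}(\lambda)(y-x)}$ times a holomorphic function bounded by $\min(1,\re^{-\kappa x/2})+\min(1,\re^{-\kappa y/2})$ (Lemma \ref{l:ODE-behavior}). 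We therefore repeat Step 1 with this bounded holomorphic amplitude, which is $\lambda$-smooth on $B(0,m)$. Finally, $\Kiv$ and the negative-$x,y$ pieces carry no singularity at $0$: $\Kiv$ is holomorphic on $\varLambda$ with bound $\re^{-r(\lambda)|x-y|}$. We deform $\varGamma_0$ to pass left of the origin, to $\real\lambda=-\theta'$, which is possible since the Evans function does not vanish and $\Kiv$ has no branch at $0$. This gives $\re^{-\kappa t}\re^{-\kappa|x-y|}$, with the large-$|\lambda|$ part handled as in Step 1.

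\textbf{Main obstacle.} We expect the hardest part to be the uniform saddle-point estimate for $\gi$ and $\Giii$ across all regimes of $(t,y-x)$, and especially the matching between the parabolic short-time regime and the transported Gaussian. The difficulty is choosing $\mu$ so that the deformed contour stays in the small ball where Lemma \ref{l:Taylor-nup1} holds. We will first try the Howard–Zumbrun choice, splitting according to $|y-x-\sigma t|/(t+y-x)\lessgtr\varepsilon$, and absorb the cubic remainder using $|\xi-\xi_0|\leq\varXi$ small.
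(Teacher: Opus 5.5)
Your core argument is the paper's: the sectorial contour, the Taylor expansion of $\nup{1}$ along $\varGamma_0$ with a complex shift of size $\sim z_1/z_2$, the extra $\cO(\lambda)$ factor for $\Gii$, a bounded holomorphic amplitude for $\Giii$, and a left shift of $\varGamma_0$ for $\Giv$. (The paper shifts by $\kappa\alpha_1 z_1/(2\sigma z_2)$, which is automatically small since $\absolute{z_1}\lesssim\absolute{z_2}$, so your cap is harmless but unnecessary.) Two of your steps cannot work, however, and the paper's proof glosses over the same two points.

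First, short times. Your heat-kernel bound $t^{-1/2}\re^{-(y-x)^2/(Ct)}$ is not dominated by $g$: at $y-x=\sqrt t$ it has size $t^{-1/2}$, while $g\approx t^{-1/4}$. Moreover $\gi$ itself, whose symbol $\re^{-\nup{1}(\lambda)(y-x)}$ lacks the $\lambda^{-1/2}$ factor of the full kernel, behaves like $(y-x)t^{-3/2}\re^{-(y-x)^2/(Ct)}$ as $t\to0$. In fact no estimate of this form holds down to $t=0$. For $y<x$ the kernels $\Ki$, $\Kii$, $\Kiii$ vanish, so $\Giv=G$ there. Being to leading order a heat kernel, $G(t,x,\cdot)$ carries an order-one mass on $(x-\sqrt t,x)$, whereas $C\re^{-\kappa t}\re^{-\kappa\absolute{x-y}}$ allows only $\cO(\sqrt t)$. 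You should therefore prove the bounds for $t\geq t_0>0$ only. That is all Proposition \ref{p:lebesgue-bound-G} uses, since short times are handled there through the full $G$. The paper's $\varGamma_\infty$ estimate silently needs this restriction too, since $\int_{\varGamma_\infty}\re^{\real{\lambda}\, t}\absolute{\d\lambda}\sim t^{-1}$.

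Second, the case $x<y<0$. The kernels $\gi$ and $\Gii$ depend only on $(t,y-x)$, so they cannot be exponentially small there: the transported Gaussian has size $\sim t^{-1/2}$ at $y-x=\sigma t$, whatever the sign of $x$. Correspondingly, pushing $\varGamma_0$ to the left for $\ki$ alone fails. The obstruction is not holomorphy at $0$ but the bound on the shifted contour: inside the spectrum $\real{\nup{1}(\lambda)}<0$, so $\absolute{\ki}$ grows in $y-x$. What holds is the bound for the sum $\gi\pi+\Gii+\Giii$, and this is what the paper's argument through \eqref{e:bound-K-II} with $j=1$ actually proves. The resolvent kernel of this sum is $\psip{1}(\lambda,x)\dphip{1}(\lambda,y)\ind{y>x}=\phim{1}(\lambda,x)\dphip{1}(\lambda,y)\ind{y>x}$. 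For $x<y<0$ it is bounded by $C\re^{-r(\lambda)\absolute{x-y}}$ on all of $\varLambda$, because $\real{\num{1}(\lambda)}\geq r(\lambda)$. The left shift must be applied to this combination, not piece by piece.
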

\begin{proof}
High frequencies. Using Proposition \ref{p:large-lambda}, high frequencies in $\absolute{G(t,x,y)}$ are damped quickly enough to be irrelevant:
\begin{align*}
\Absolute{\int_{\varGamma_\infty} \re^{\lambda t} K(\lambda, x, y) \d \lambda} \leq {} & \re^{-\alpha_1 \varXi^2 t} \int_{\absolute{\xi} \geq \varXi} \re^{- \frac{\theta}{2} \absolute{\xi - \varXi} t} \, \absolute{K(\gamma(\xi), x, y)} \, \absolute{\gamma'(\xi)} \, \d \xi,\\
\leq {}& C \re^{-\alpha_1 \varXi^2 t} \int_{\varXi}^{+\infty} \re^{-\frac{\theta}{2} \absolute{\xi - \varXi} t} \, \re^{-r(\gamma(\xi))\absolute{x - y}} \, \d \xi,\\
\leq {}& C \re^{-\alpha_1 \varXi^2 t} \, \re^{- r(\gamma(\varXi)) \absolute{x - y}}.
\end{align*}

Bound on $\gi$. The proof is mainly about extracting the relevant behavior from the integrand. The expansion of $\nup{1}$ in Lemma \ref{l:Taylor-nup1} ensures there exists a holomorphic $h_1:B(0, \kappa) \to \C$ satisfying $\absolute{h_1(\lambda)}\leq C\absolute{\lambda}^3$, and such that
\begin{equation*}
\ki(\gamma(\xi), x, y) = \re^{-(\beta_0 + \frac{\alpha_1}{\sigma} \ri (\xi - \xi_0) + \beta_2 (\xi - \xi_0)^2) (y - x)} \, \re^{h_1(\ri(\xi - \xi_0)) y - x} \, \ind{y>x}.
\end{equation*}
We fix a small constant $\kappa > 0$, and assume that $\varXi \leq \kappa$. We denote 
\begin{align*}
H_1(\ri \zeta, y - x) = {}& \re^{h_1(\ri \zeta) (y - x)}, \\[0.5em]
H_2(\ri \zeta) = {}& \gamma'(\zeta) = \ri (\alpha_1 + 2 \alpha \ri \zeta), \\[0em]
H_3(\ri \zeta, y - x, t) = {}& \exp\bigg(- \beta_0 (y - x) - (1 - \kappa) \frac{\alpha_1}{\sigma} \ri \zeta (y - x - \sigma t)\bigg) 
\end{align*}
and compute 
\begin{align*}
\int_{\varGamma_0} \re^{\lambda t} \ki(\lambda, x, y) \d \lambda = {}& \int_{-\varXi}^{\varXi} \re^{(\ri \alpha_1 \zeta - \alpha \zeta^2) t} \re^{-(\beta_0 + \frac{\alpha_1}{\sigma} \ri \zeta + \beta_2 \zeta^2) (y - x)} \big(H_1\times H_2\big) (\ri \zeta, y - x) \, \ind{y > x} \, \d \zeta, \\
= {}& \int_{-\varXi}^{\varXi} \re^{- \kappa \frac{\alpha_1}{\sigma} \ri \zeta \left(y - x - \sigma t\right)} \re^{-\zeta^2\left(\beta_2 (y - x) + \alpha_1 t\right)} \, \big(H_1\times H_2 \times H_3\big) (\ri \zeta, y - x, t) \, \ind{y > x} \, \d \zeta.
\end{align*}
We further write
\begin{align*}
z_1 = {}& y - x - \sigma t , \\
z_2 = {}& \beta_2 (y - x) + \alpha t,
\end{align*}
and the above computation provides $H:B(0, \kappa) \times (0,+\infty)^2 \to \C$, holomorphic with respect to its first argument, satisfying
\begin{equation}
\label{e:k1-claim}
\int_{\varGamma_0} \re^{\lambda t} \ki(\lambda, x, y) \d \lambda = \int_{-\varXi}^{\varXi} \re^{- \kappa \frac{\alpha_1}{\sigma} \ri \zeta z_1} \re^{- \zeta^2 z_2} H(\ri \zeta, y - x, t) \ind{y > x} \d \zeta.
\end{equation}
Completing the square
\begin{equation*}
- \kappa \frac{\alpha_1}{\sigma} \ri\zeta z_1 - \zeta^2 z_2 = z_2 \left(\ri \zeta - \kappa \frac{\alpha_1 z_1}{2 \sigma z_2}\right)^2 - \frac{\kappa^2 {\alpha_1}^2}{4 \sigma^2} \frac{{z_1}^2}{z_2}, 
\end{equation*}
we see that 
\begin{equation}
\label{e:k1-bound1}
\Absolute{\int_{\varGamma_0} \re^{\lambda t} \ki(\lambda) \d \lambda} = \exp\left(-\frac{\kappa^2 {\alpha_1}^2}{4 \sigma^2} \frac{{z_1}^2}{\real{z_2}}\right) \ \Absolute{\int_{-\varXi}^{\varXi} \exp\left(z_2 \left(\ri \zeta - \kappa \frac{\alpha_1 z_1}{2 \sigma z_2}\right)^2\right) H(\ri \zeta) \ind{y>x} \d \zeta}.
\end{equation}
The right hand side integral can be estimated by shifting the complex integration path
\begin{align}
\label{e:k1-bound2}
\Absolute{\int_{-\varXi}^{\varXi} \exp\left(z_2 \left(\ri \zeta - \kappa \frac{\alpha_1 z_1}{2 \sigma z_2}\right)^2\right) H(\ri \zeta) \d \zeta} = {} & \Absolute{\int_{-\varXi}^{\varXi} \re^{- z_2 \zeta^2} H\left(\ri \zeta + \kappa\frac{\alpha_1 z_1}{2 \sigma z_2}\right) \d \zeta}.
\end{align}
Because $\absolute{\alpha_1 \frac{z_1}{2 \sigma z_2}}$ is bounded uniformly with respect to $t\geq 0$ and $\absolute{x - y} \geq 0$, it is enough to bound the function $H(\cdot, x, y) \ind{y>x}$ on a $\kappa$-neighborhood of the origin. For this we remark that
\begin{align*}
(y - x) \Absolute{h_1\bigg(\ri \zeta + \kappa \frac{\alpha_1 z_1}{2 \sigma z_2}\bigg)} 
\leq {}& C (y - x) \Absolute{\ri \zeta + \kappa \frac{\alpha_1 z_1}{2 \sigma z_2}}^3, \\
\leq {}& C (y - x)  \bigg(\absolute{\zeta}^3 + \Absolute{\kappa \frac{\alpha_1 z_1}{2 \sigma z_2}}^3\bigg), \\
\leq {}& C \kappa \, \real{z_2} \, \zeta^2 + C \kappa^3 \frac{{z_1}^2}{\real{z_2}},
\end{align*}
where the last inequality comes from $\absolute{\zeta} \leq \kappa$, $(y - x) \leq C\real{z_2}$ and $\absolute{z_1} \leq C\absolute{z_2}$.
Since $H_2$ does not depend on $t$, $x$ and $y$, it is obviously bounded on $B(0, C\kappa)$. A simple computation leads to
\begin{align*}
\Absolute{H_3\left(\ri \zeta + \kappa \frac{\alpha_1 z_1}{2 \sigma z_2}, y - x, t\right)} = {}& \exp\left(- \kappa(1 - \kappa) \frac{\alpha_1}{2 \sigma^2} \, \frac{{z_1}^2}{\real{z_2}}\right)\leq 1.
\end{align*}
All in all, taking $\kappa$ small enough, we have
\begin{equation*}
\Absolute{H\left(\ri \zeta + \kappa \frac{\alpha_1 -z_1}{2 \sigma z_2}, y - x, t\right)} \leq C\exp\left(\frac{\real{z_2}}{2} \zeta^2\right) \exp\left(\kappa^2\frac{{z_1}^2}{8\real{z_2}}\right).
\end{equation*}
Inserting this bound into \eqref{e:k1-bound2}, and recalling the Gaussian factor from \eqref{e:k1-bound1}, we arrive at
\begin{align}
\nonumber\Absolute{\int_{\varGamma_0} \re^{\lambda t} \ki(\lambda, y) \d \lambda} \leq {}& C \exp\left(-\kappa^2 \frac{{z_1}^2}{8 \real{z_2}}\right) \int_{-\varXi}^{\varXi} \re^{-\frac12 \real{z_2} \zeta^2} \d \zeta, \\
\label{e:bound-Gi-diffusive-decay}\leq {}& C \frac{1}{\sqrt{\real{z_2}}} \exp\left(-\kappa^2 \frac{{z_1}^2}{8 \real{z_2}}\right).
\end{align}
Relabelling $\kappa$ to obtain a $\frac{\kappa}{t + \absolute{x - y}}$ instead of the current $\frac{\kappa^2}{8 \real{z_2}}$, we are done with this step.

Bound on $\Gii$ and $\Giii$ are obtained by slight modifications of the previous approach. For $\Gii$, the extra $\absolute{\gamma(\xi)} \leq \absolute{\zeta}$ factor  improves estimate \eqref{e:bound-Gi-diffusive-decay} by an additional $\sqrt{\real{z_2}}$ factor:
\begin{equation*}
\int_{-\varXi}^{\varXi} \absolute{\zeta} \re^{-\frac12 \real{z_2} \zeta^2} \d \zeta \leq C \min\left(1, \frac{1}{\real{z_2}}\right).
\end{equation*}
For $\Giii$, the extra spatial localization go through estimates without changes.

Bond on $\Giv$ and improved bounds when $x < y < 0$. For these terms, we homotopically deform $\varGamma_0$ into 
\begin{equation*}
\varGamma_\mathrm{s} = \Set{\gamma_\mathrm{s}(\xi) = \ri \alpha_1 (\xi-\xi_0) - \alpha {\varXi}^2  : \xi \in [\xi_0 - \varXi, \xi_0 + \varXi]}
\end{equation*}
which belongs to the left half of the complex plane. This allows to bound 
\begin{equation*}
\Absolute{\int_{\varGamma_\mathrm{s}} \re^{\lambda t} \Giv(\lambda, x,y) \d \lambda} \leq \int_{-\varXi}^{\varXi} \re^{-\alpha_1 {\varXi}^2 t} \absolute{\Kiv(\gamma_\mathrm{s}(\xi), x, y)} \absolute{\gamma_\mathrm{s}'(\xi)} \d \xi.
\end{equation*}
Inserting the estimates on $\Kiv$ from Proposition \ref{p:small-lambda}, we then get the claimed estimate.
\end{proof}

With small modification, it is possible to bound temporal and spatial derivatives of the Green kernel. Since there is a smoothness defect at $x = y$, space-differentiation creates Dirac masses. To properly state the bounds, we decompose the kernel derivatives into the sum of two distribution. The first is atomic, the second is absolutely continuous with respect to Lebesgue measure of $\R_y$. For $(n, k, j) \in \Set{0,1}^3$, we thus write 
\begin{equation*}
\partial_t^n \partial_x^k \partial_y^j \gi(t,x,y) = \gi_{\at, n,k,j}(t,x) \, \Dirac_{x}(y) + \gi_{\ac, n, k, j}(t,x,y).
\end{equation*}
Only $x$-derivatives will be useful for the main result proof, we estimate the other for completeness.
\begin{proposition}
\label{p:green-kernel-bounds-2}
There exists $\kappa>0$ such that for all $(n, k, j) \in \Set{0,1}^2$, for all $(x, y) \in \R^2$ and $t \geq 0$,  
\begin{align*}
\absolute{\gi_{\ac, n, k, j}(t,x,y)} \leq {}& C (t + \absolute{x - y})^{- \frac{n + k + j}{2}} g(t, x, y) \, \ind{y > x},\\[1em]
\absolute{\gi_{\at, n, k, j}(t,x)} \leq {}& C \re^{-\kappa t},\\[1em]
\absolute{\Giv_{\ac, n, k, j}(t,x,y)} \leq {}& C \re^{-\kappa t} \re^{-\kappa \absolute{x-y}},\\[1em]
\absolute{\Giv_{\at, n, k, j}(t,x)} \leq {}& C \re^{-\kappa t}.
\end{align*}
The estimates on $\gi$ are also satisfied by $\Gii$ and $\Giii$.
\end{proposition}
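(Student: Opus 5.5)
We would follow the proof of Proposition \ref{p:green-kernel-bounds-1}, working on the resolvent side and only then inverting the Laplace transform. Temporal derivatives are handled by the identity $\partial_t^n G = \frac{1}{2\ri\pi}\int_\varGamma \lambda^n \re^{\lambda t} K \d\lambda$. Spatial derivatives are handled by differentiating the explicit kernels. For $\ki$ we have
\begin{equation*}
\partial_x \ki = \nup{1}(\lambda)\ki, \qquad \partial_y \ki = -\nup{1}(\lambda)\ki,
\end{equation*}
away from the diagonal, and the jump of $\ind{y>x}$ produces a Dirac mass at $y=x$ with coefficient $\pm 1$. This computation fixes the splitting into an atomic and an absolutely continuous part before any integration. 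For $K$ itself, and hence for $\Kiv = K-\Ki-\Kii-\Kiii$, the same splitting comes from the jump condition \eqref{e:jump-cond}. A single derivative of $\K$ has only a jump, so the atomic part is created only when a second derivative meets that jump. The coefficients of these atomic parts are then $\lambda$-polynomials times bounded matrices such as $\ell_2^{-1}$ or $\ell_1(y)\ell_2^{-1}$.

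\textbf{Absolutely continuous parts.} For $\gi$, the extra factor is $\lambda^n(\pm\nup{1}(\lambda))^{k+j}$. Along $\varGamma_0$ we substitute $\lambda=\gamma(\xi)$ and use the expansion of $\nup{1}(\gamma(\xi))$ from Lemma \ref{l:Taylor-nup1}. There is one subtlety: $\nup{1}(0)=\beta_0$ is purely imaginary but possibly nonzero, since $\xi_0\neq 0$ is allowed. The honest gain therefore comes from derivatives of the conjugated quantities $\re^{-\beta_0 (y-x)}\ki$, i.e.\ from $\nup{1}-\beta_0 = \cO(\zeta)$. We expect either to absorb $\beta_0$ into the phase, or to accept that the decay factor is really attached to $\nup{1}-\beta_0$; the rest of the argument does not depend on which. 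Each factor $\cO(\zeta)$ is then inserted into the shifted-contour computation \eqref{e:k1-bound2}, where $\zeta$ becomes $\zeta + \kappa\frac{\alpha_1 z_1}{2\sigma z_2}$. This gives
\begin{equation*}
\int \absolute{\zeta}^m \re^{-\frac12 \real{z_2}\zeta^2}\d\zeta \lesssim \real{z_2}^{-\frac{m+1}{2}}.
\end{equation*}
The shift term $\absolute{z_1/z_2}^m$ is bounded by $\real{z_2}^{-m/2}$ times a power of $z_1^2/\real{z_2}$, and that power is absorbed into the Gaussian after shrinking $\kappa$. Since $\real{z_2}\asymp t+\absolute{x-y}$, this yields the factor $(t+\absolute{x-y})^{-(n+k+j)/2}$. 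For small $t+\absolute{x-y}$ we instead bound $\absolute{\zeta}^m\le C$ on $[-\varXi,\varXi]$. The high-frequency part $\varGamma_\infty$ is treated as before, using Proposition \ref{p:large-lambda} with the extra factor $\absolute{\lambda}^n r(\lambda)^{k+j}$. Because $\real{\gamma}\le-\alpha\varXi^2-\frac\theta2\absolute{\xi}$, the resulting integral is still exponentially decaying in $t$, and the factor $\re^{-r\absolute{x-y}}$ gives spatial decay.

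The same scheme covers the other kernels. $\Gii$ behaves like $\gi$ with an additional factor $\lambda$. For $\Giii$, derivatives of $\psip{1}$ and $\dphip{1}$ satisfy the same asymptotics \eqref{e:ODE-behavior} componentwise through the $2n$-vector $\Psip{1}$, so the previous estimates go through. For $\Giv$, we deform to $\varGamma_\mathrm{s}\cup\varGamma_\infty$ and use the derivative analogues of the bounds in Proposition \ref{p:small-lambda}. These analogues hold because the second components of the $\Phi$'s are the $x$-derivatives, and because $\partial_x\Kiv$ only involves the bounded $\nu$-factors $\nup{1}$.

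\textbf{Main obstacle: the atomic parts.} Their symbols are polynomials in $\lambda$ that do not decay, so the contour integral $\int_\varGamma \re^{\lambda t}\lambda^n\d\lambda$ is not absolutely convergent. For $t>0$ we would close the sectorial contour to the left, which gives zero contribution, or more generally exponential decay once the contour is deformed into $\real\lambda\le-\kappa$. This requires justifying the appendix's inverse Laplace argument in a distributional sense, which we expect to be the delicate point. The stated bound $C\re^{-\kappa t}$ is then immediate. We would also check, by the cancellation in the definition of $\Kiv$, that the atoms of $\gi$ and of $K$ combine consistently.
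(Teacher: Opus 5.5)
Your scheme is the paper's: factors $\lambda^n$ for time derivatives, factors of $\nup{1}$ for space derivatives fed into the shifted-contour Gaussian computation, atoms pushed into $\Set{\real\lambda<0}$, and $\Giv$ on a stable contour. Your identities $\partial_x\ki=\nup{1}\ki$, $\partial_y\ki=-\nup{1}\ki$, and your care with $\absolute{z_1/z_2}^m$ and with the non-absolutely-convergent atomic integrals, are cleaner than what the paper writes. Two points are not settled, though.

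First, $\beta_0$ is not a matter of choice. If $\nup{1}(0)\neq 0$, then $\partial_x\gi$ contains $\beta_0\gi$, which is of size $g$ and not $(t+\absolute{x-y})^{-1/2}g$. The statement would then be false, so contrary to your remark the argument does depend on which option you take. The bound holds because $\beta_0=0$. The weight $\varOmega$ already contains the phase $\re^{\ri\xi_0 x}$, so by Lemma \ref{l:front} the bounded solution $\Psip{1}(0,\cdot)=\transpose{(\varOmega^{-1}\uu',(\varOmega^{-1}\uu')')}$ tends to a nonzero constant at $+\infty$. This forces $\nup{1}(0)=0$. The paper uses this implicitly when it writes $\nup{1}(\gamma(\xi))=\bigo(\xi-\xi_0)$.

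Second, the derivative gain for $\Giii$ does not follow from \guillemet{the previous estimates go through}, and this is the step the paper's proof is actually about. Suppose $\partial_y$ lands on the $\cO$-remainder in $\dphip{1}(\lambda,y)=\re^{-\nup{1}y}\dvp+\cO(\cdot)$, or $\partial_x$ on that of $\psip{1}$. Then no factor $\nup{1}$ or $\lambda$ is produced. Rerunning the Gaussian computation only returns the Proposition \ref{p:green-kernel-bounds-1} bound $g\,(\exp^{0,-\kappa}(x)+\exp^{0,-\kappa}(y))$. At $x=-\sigma t/2$, $y=\sigma t/2$ this is of order $t^{-1/2}$, whereas the target $(t+\absolute{x-y})^{-1/2}g$ is of order $t^{-1}$.

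The paper closes this gap with two observations. The remainder in \eqref{e:decomp-partial-y-dphip1} is $y$-integrable uniformly for $\lambda$ near $0$. Also, $\psip{1}(\lambda,\cdot)$ becomes localized once $\lambda$ enters the essential spectrum. Hence, for that piece alone, $\varGamma_0$ can be pushed into $\Set{\real\lambda<0}$. This gives $C\re^{-\eta t}\re^{-\frac{\eta}{2}(y-x)}\ind{y>x}$, which is dominated by $(t+\absolute{x-y})^{-1/2}g$ after shrinking the Gaussian constant. You could instead track localization in $y-x$ rather than in $x$ and $y$ separately, using that $\psip{1}=\phim{1}$ decays at $-\infty$. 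Either way, some such argument must be supplied.

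A smaller inaccuracy: $\Kiv$ inherits the jump of $\psip{1}(\lambda,x)\dphip{1}(\lambda,y)\ind{y>x}$. So $\partial_y\Kiv$ already has an atom, with coefficient $-\psip{1}(\lambda,x)\dphip{1}(\lambda,x)$, which is holomorphic but not polynomial in $\lambda$. Your contour deformation still handles it, but atoms do not arise only from second derivatives of $K$.
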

\begin{proof}
Bound on $\partial_t^n \partial_y^l \gi$. The proof is almost identical to the previous one, so we simply point at differences. 
Differentiating with respect to time brings an extra $\lambda$ factor below the integral, that we directly transfer into the function $H$. The improved decay then comes from 
\begin{equation*}
\int_\R \absolute{\zeta}^n \re^{-z_2 \zeta^2} \d \zeta \leq C {z_2}^{-\frac{1}{2}(1+n)}.
\end{equation*}

When differentiating with respect to $y$, we pass the $\partial_y$ below the integral, and compute that
\begin{equation*}
\partial_y \big(\dphip{1}(\lambda, y) \ind{y>x} \big) = \partial_y \dphip{1}(\lambda, y) \, \ind{[x, \infty)}(y) + \dphip{1}(\lambda, y) \, \Dirac_{x}(y).
\end{equation*}
Lemma \ref{l:ODE-behavior} provides a description of $\partial_y \dphip{1}(\lambda)$, through the $n$ last components of $\dPhip{1}$. 
More precisely, because $\dual{V}_1^+(\lambda)$ is a left eigenvector for $B^+(\lambda)$ and due to the special structure of $B$, see \eqref{e:1st-order-trans-pb}, we get that $\dual{V}_1^+(\lambda) = \begin{pmatrix} v_1^+(\lambda) & \nup{1}(\lambda) v_1^+(\lambda) \end{pmatrix}$. Thus, $\partial_y \dphip{1}(\lambda,y)$ writes as 
\begin{equation}
\label{e:decomp-partial-y-dphip1}
\partial_y \dphip{1}(\lambda, y) = \nup{1}(\lambda) \re^{- \nup{1}(\lambda) y} v_1^+(\lambda) + \bigo_{\lambda, y}\bigg(\re^{-\left(\real{\nup{1}(\lambda)}+\kappa\right) y}\bigg).
\end{equation}
In the right hand side of \eqref{e:decomp-partial-y-dphip1}, the bracket term is $y$-integrable, uniformly with respect to $\lambda$ close to $0$. This allows to deform the integration path $\varGamma_0$ into the left half of the complex plane for this term. Remark that $\psip{1}(\lambda, x)$ also needs to be estimated on this deformed integration path. Fortunately, it becomes exponentially localized when entering the spectrum.
\begin{align*}
\Absolute{\int_{\varGamma_0} \re^{\lambda t} \, \psip{1}(\lambda, x) \, \bigo_{\lambda, y}\left(\re^{-\left(\real{\nup{1}(\lambda)}+\kappa\right) y}\right) \, \ind{y>x} \, \d \lambda} \leq {}& C \re^{-\eta t} \, \re^{- \eta (x - y)} \, \re^{-\kappa y} \, \ind{y>x}, \\
\leq {}& C \re^{-\eta t} \, \re^{-\frac{\eta}{2} (y - x)} \, \ind{y>x}.
\end{align*} 
The term with the Dirac mass is controlled similarly: Since there is no need to care for integrability at $y\to +\infty$, we can stabilize $\varGamma_0$ at no cost. It only remains to bound 
\begin{equation*}
\int_{\varGamma_0} \re^{\lambda t} \psip{1}(\lambda, x)\, \nup1(\lambda) \re^{-\nup1(\lambda) y} v_1^+(\lambda) \d \lambda 
\end{equation*}
and for this part we follow the strategy that was used to bound $\gi$. The extra $\nup1(\gamma(\xi)) = \bigo(\xi-\xi_0)$ factor is saved into the function $H_1$, and used to improve decay as described previously.

Differentiation with respect to $x$ is done similarly. Estimates on $\Gii$ and $\Giii$ are proven in the same way.

For the estimates on $\partial_t^n \partial_y^j \Giv$, the integration path can be moved in the stable complex half-plane without difficulty.
\end{proof}

\begin{proposition}
\label{p:lebesgue-bound-G}
There exists positive constants $\kappa$, $\sigma$ and $C$ such that the following points hold.
\begin{itemize}
\item For all $\rho: \R \to [1,+\infty)$ non-decreasing, for all $x \in \R$ and $t\geq 1$
\begin{equation}
\label{e:estimate-gi-rho}
\Norm{\frac{\gi(t,x, \cdot)}{\rho}}_{W^{k,1}(\R)} \leq \frac{C}{\rho\left(x + \frac\sigma2 t\right) t^{\frac{k}{2}}},
\end{equation}
while
\begin{align*}
\Norm{\gi(t,x, \cdot)}_{W^{k, \infty}(\R)} + \Norm{\Gii(t,x, \cdot)}_{W^{k, 1}(\R)} \leq {}& \frac{C}{t^\frac{1 + k}{2}}, \\[1em]
\Norm{\Giii(t,x, \cdot)}_{W^{k, 1}(\R)} \leq {}& \frac{C \exp^{0, -\kappa}(x)}{t^\frac{k}{2}} \\[1em]
\Norm{\Giv(t,x, \cdot)}_{W^{k,1} \cap W^{k,\infty}(\R)} \leq {}& C \re^{-\kappa t}.
\end{align*}
\item For all $x \leq 0$ and $t\geq 1$
\begin{align*}
\Norm{\gi(t,x, \cdot)}_{W^{k,1} \cap W^{k,\infty}(-\infty, 0)} + \Norm{\Gii(t,x, \cdot)}_{W^{k,1} \cap W^{k,\infty}(-\infty, 0)} + \Norm{\Giii(t,x, \cdot)}_{W^{k,1} \cap W^{k,\infty}(-\infty, 0)} \leq C \re^{-\kappa t}.
\end{align*}
\item For all $t\in [0,1]$ and all $x\in \R$
\begin{equation*}
\norm{G(t, x, \cdot)}_{L^1(\R)} \leq C.
\end{equation*}
\end{itemize}
\end{proposition}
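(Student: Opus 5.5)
The plan is to integrate the pointwise bounds of Propositions \ref{p:green-kernel-bounds-1} and \ref{p:green-kernel-bounds-2} in $y$; the only non-routine point is the weighted estimate \eqref{e:estimate-gi-rho}. Throughout, $W^{k,p}$ norms in $y$ are handled by splitting the $y$-derivatives into their atomic parts (Dirac masses at $y=x$, bounded by $C\re^{-\kappa t}$ and hence harmless) and their absolutely continuous parts. The absolutely continuous parts satisfy the same Gaussian bound as the undifferentiated kernel, with an extra factor $(t+\absolute{x-y})^{-k/2}\leq t^{-k/2}$. It is therefore enough to treat $k=0$ and carry this factor along.

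For \eqref{e:estimate-gi-rho}, set $z=y-x>0$. The kernel $g(t,x,y)$ is a Gaussian in $z$ centred at $\sigma t$ with variance of order $t+z$, up to the prefactor $(t+z)^{-1/2}$. I would split the $z$-integral into two regions.
\begin{itemize}
\item For $z\geq\sigma t/2$, the weight satisfies $\rho(y)\geq\rho(x+\sigma t/2)$ because $\rho$ is non-decreasing. The remaining quantity $\int g\,\d z$ is bounded uniformly: near $z\sim\sigma t$ the Gaussian has mass $O(1)$, and for $z\gg t$ the exponent is $\gtrsim\kappa z$, which gives integrability.
\item For $0<z<\sigma t/2$, we have $\absolute{z-\sigma t}\geq\sigma t/2$ and $t+z\leq Ct$, so $g\leq Ct^{-1/2}\re^{-ct}$ and the integral over this region is at most $C\re^{-ct}$, using only $\rho\geq1$.
\end{itemize}
The second region is the main obstacle. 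The bound $C\re^{-ct}$ is not $\leq C/\rho(x+\sigma t/2)$ for an arbitrary non-decreasing $\rho$ and all $x$. I would first try to exploit the requirement $t\geq1$ together with the shape of $g$ more carefully; if that fails, I would state the estimate for weights of sub-exponential type (Definition \ref{d:sub-exp}, $\rho(x)\leq M\re^{\eta x}$ with $\eta$ small relative to $c$), in which case $\re^{-ct}\lesssim1/\rho(\sigma t/2)$. For $x\leq0$ this settles the matter, since $\rho(x+\sigma t/2)\leq\rho(\sigma t/2)$. For $x>0$ it would require the submultiplicative-type inequality $\rho(x+s)\leq M\rho(x)\re^{\eta s}$, which is exactly \eqref{e:sub-exp-alternative-estimate}. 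This is where I expect the actual work, or a hidden hypothesis, to lie.

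The unweighted bounds follow in the same way.
\begin{itemize}
\item The $L^\infty$ bound on $\gi$ is immediate, since $g\leq(t+z)^{-1/2}\leq t^{-1/2}$.
\item For the $L^1$ bound on $\Gii$, the extra factor $\min(1,(t+z)^{-1/2})\leq t^{-1/2}$ multiplies an $O(1)$ Gaussian integral, giving $t^{-1/2}$.
\item For $\Giii$, the term involving $\exp^{0,-\kappa}(y)$ is integrable and tiny unless $y$ is small; but $y\approx x+\sigma t$ on the bulk of the mass, so this term gives $\re^{-ct}$ for $x\geq0$ and at most $C$ otherwise. The term involving $\exp^{0,-\kappa}(x)$ factors out directly. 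Together these give $C\exp^{0,-\kappa}(x)t^{-k/2}$.
\item The bound on $\Giv$ is just the integral of $\re^{-\kappa t}\re^{-\kappa\absolute{x-y}}$.
\item The second bullet, for $x\leq0$, $y<0$, uses the improved exponential bounds of Proposition \ref{p:green-kernel-bounds-1}. For $y\geq0>x$, we have $z\geq\absolute{x}$ and the Gaussian still forces either exponential smallness or $z\approx\sigma t$, in which case the $\re^{-\kappa x}$ factors were already accounted for. I would recheck this case against the kernel estimates rather than assert it.
\end{itemize}

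For $t\in[0,1]$, I would not use the decomposition, because the Laplace contour bounds degenerate as $t\to0$ (the factor $(t+z)^{-1/2}$ is not uniformly integrable near $z=0$). Instead I would use standard short-time parabolic theory: $\cL$ is uniformly parabolic with $\buc{1}$ coefficients, so a parametrix / Gaussian bound of the form $\absolute{G}\leq Ct^{-1/2}\re^{-c(x-y)^2/t}$ holds on $[0,1]$ and integrates to $C$. Alternatively, the high-frequency contour estimate from Proposition \ref{p:large-lambda}, with $r(\lambda)\sim\absolute{\lambda}^{1/2}$, reproduces this heat-kernel bound after rescaling $\lambda\mapsto\lambda/t$.
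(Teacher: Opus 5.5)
Your plan follows the paper's proof. Both integrate the pointwise bounds of Propositions \ref{p:green-kernel-bounds-1}--\ref{p:green-kernel-bounds-2} in $y$, using that the Gaussian is transported while $\rho$ is not. Your two regions are the paper's three with the last two merged, and on the far right you keep $1/\rho(y)\leq 1/\rho(x+\sigma t/2)$, which the paper drops.

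The obstacle you isolate on $0<y-x<\sigma t/2$ is real. It is exactly where the paper's proof stops short: it bounds that piece by an exponentially decaying function of $t$ and never compares it with $1/\rho(x+\sigma t/2)$. In fact \eqref{e:estimate-gi-rho} is false as stated. Fix $t\geq 1$ and $x$, and take $\rho=1$ on $(-\infty,x+\sigma t/2)$ and $\rho=R$ on $[x+\sigma t/2,+\infty)$. The left-hand side is at least $\int_x^{x+\sigma t/2}\absolute{\gi(t,x,y)}\,\d y$, which does not depend on $R$ and is nonzero in general, while the right-hand side is $C/R$. So your first option cannot succeed.

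Your fallback is the right repair, once two details are made explicit. First, on that region use $\rho(y)\geq\rho(x)$ rather than $\rho\geq 1$, which gives $C\re^{-ct}/\rho(x)$. Second, use $\rho(x+s)\leq M\re^{\eta}\rho(x)\re^{\eta s}$ for $x\geq 0$, with $\eta\sigma/2\leq c$. This inequality does follow from Definition \ref{d:sub-exp}: restrict its integral, taken at $x+s$, to $[x-1,x]$. The paper only proves the converse implication. The restricted estimate covers every later use (Lemma \ref{l:estimate-shape-function}, Theorem \ref{t:main}), since $\rho$ is there sub-exponential with small rate. For the $\sup_{x\geq 0}$ taken in Proposition \ref{p:temp-decay}, a growth bound $\rho(s)\leq C\re^{c's}$ with $c'$ small would already suffice.

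Three smaller points.

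\emph{The $\Giii$ bound.} Use $\exp^{0,-\kappa}(y)\leq\exp^{0,-\kappa}(x)$ for $y>x$, as the paper does. Your ``$\re^{-ct}$ for $x\geq 0$'' does not produce the factor $\exp^{0,-\kappa}(x)$ when $x\gg t$.

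\emph{The second bullet.} The norms are over $y<0$, so with $x\leq 0$ only $x<y<0$ enters, and the case $y\geq 0$ never arises. What deserves rechecking instead is the improved bound you import from Proposition \ref{p:green-kernel-bounds-1}. Since $\gi$ depends on $y-x$ alone, for $x=-\sigma t$ its transported bump sits at $y\approx 0$, so its norm on $(-\infty,0)$ is not exponentially small. The improved decay can therefore only hold for the sum $\gi\pi+\Gii+\Giii$, i.e.\ the inverse Laplace transform of $\phim{1}(\lambda,x)\dphip{1}(\lambda,y)\ind{y>x}$. That sum is what the proof of Proposition \ref{p:small-lambda} actually controls.

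\emph{Short time, $t\in[0,1]$.} Your stated reason is off: $(t+z)^{-1/2}$ is integrable at $z=0$ uniformly in $t$, and the paper simply integrates the pointwise bounds. Still, your recourse to classical short-time Gaussian bounds for the parabolic system is sound, and in fact needed. The bounds of Proposition \ref{p:green-kernel-bounds-1} cannot hold uniformly as $t\to 0$. For $y<x$ one has $G=\Giv$, so they would give $\absolute{G(t,x,y)}\leq C+C\absolute{y-x}^{-1/2}$ near $y=x$, which is incompatible with $G(t,x,\cdot)\to\Dirac_x$. The culprit is that the high-frequency estimate there silently drops a factor $1/t$. Your rescaling alternative would additionally need $\absolute{K}\leq C\absolute{\lambda}^{-1/2}\re^{-c\absolute{\lambda}^{1/2}\absolute{x-y}}$, which Proposition \ref{p:large-lambda} does not provide.
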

\begin{proof}
We start with the $L^1(\R)$ estimate of $\gi$, and decompose the half line into three parts: 
\begin{equation*}
[x, +\infty) = \Set{x} + \left[0, \frac{\sigma}{2} t\right] \cup \left[\frac{\sigma}{2}t , 2\sigma t\right] \cup \left[2\sigma t, +\infty\right).
\end{equation*} 
On the second part, we exploit the fact that the Gaussian-like kernel is translated, while $\rho$ is not. On the first and third part, we simply use the Gaussian tails to get strong temporal decay. Starting with the second part, we perform the change of variable $z = y - x - \sigma t$, and then bound
\begin{align*}
\int_{-\frac{\sigma}{2} t}^{\sigma t} \frac{1}{\sqrt{t + z + \sigma t}} \, \frac{1}{\rho(z + x + \sigma t)} & \exp\left(-\kappa \frac{z^2}{z + (1 + \sigma)t}\right) \d z, \\
\leq {} & \frac{1}{\sqrt{t}}\, \frac{1}{\rho(x + \frac{\sigma}{2} t)} \int_{-\frac{\sigma}{2} t}^{\sigma t}  \exp\left(-\frac{\kappa}{1 + 3\sigma} \, \frac{z^2}{t}\right) \d z ,\\
\leq {} & \frac{C}{\rho(x + \frac{\sigma}{2} t)}.
\end{align*}
We now turn to the Gaussian tails. For the left one, the same change of variable leads to 
\begin{align*}
\int_{-\sigma t}^{-\frac{\sigma}{2} t} \frac{1}{\sqrt{t + z + \sigma t}} \, \frac{1}{\rho(z + x + \sigma t)} \exp\left(-\kappa \frac{z^2}{z + (1 + \sigma)t}\right) \d z \leq \int_{-\sigma t}^{-\frac{\sigma}{2} t}  \exp\left(-\frac{\kappa}{1 + 2\sigma} \, \frac{z^2}{t}\right) \d z.
\end{align*}
The right hand side then decays exponentially in time.
For the right tail, the same approach gives
\begin{align*}
\int_{\sigma t}^{+\infty} \frac{1}{\sqrt{t + z + \sigma t}} \, \frac{1}{\rho(z + x + \sigma t)} & \exp\left(-\kappa \frac{z^2}{z + (1 + \sigma)t}\right) \d z \\
\leq {}& \int_{\sigma t}^{+\infty}  \exp\left(-\kappa \frac{z^2}{z + (1 + \sigma)t}\right) \d z, \\
\leq {}& \int_{\sigma t}^{+\infty}  \exp\left(-\kappa q z \right) \d z.
\end{align*}
To obtain the last bound, we set $q \in (0, \frac{\sigma}{1 + 2\sigma})$ and check that $z^2 \geq q z (z + (1 + \sigma)t)$ holds. 

We now prove the other estimates. The $L^\infty$ estimate on $\gi$ is direct. The $L^1$ estimates on $\Gii$ and $\Giii$ is proven similarly as the $\gi$ estimate, using either the extra $\sqrt{\real{z_2}}$ factor to obtain temporal decay or the additional spatial localization $\ind{y>x} \exp^{0, -\kappa}(y) \leq \exp^{0, -\kappa}(x)$. The estimate on $\Giv$ as well as the improved exponential decay are direct to obtain from the improved bounds of Proposition \ref{p:green-kernel-bounds-1}. The short time estimate follows from $\norm{g(t, x, \cdot)}_{L^1(0, +\infty)} \leq C$.
\end{proof}

\subsection{Semigroup bounds}
The semigroup is obtained through
\begin{equation*}
(\re^{t\cL} u_0)(x) = \int_\R G(t,x, y) u_0(y) \d y,
\end{equation*}
and we define in a similar way operators $\si(t)$ to $\Siv(t)$ from $\gi$ to $\Giv$, so that
\begin{equation}
\label{e:semigroup-decomp}
\re^{t\cL} = \si(t)\pi + \Sii(t) + \Siii(t) + \Siv(t).
\end{equation}
Because $\uu'$ is the eigenfunction of $\cA$ associated to $\lambda = 0$, it is direct to compute that $\re^{t\cA} \uu' = \uu'$ is not decaying in time. The aim of the above decomposition is to isolate similar non-decaying modes for an initial data $u_0$ that is not necessarily equal to $\uu'$.

We start with an improved bound when the initial data is sufficiently localized.
\begin{proposition}
\label{p:improved-decay}
Let $\kappa > 0$ be sufficiently small and $k\in \Set{0, 1}$. There exists $\eta>0$ such that for $(j,n) \in \Set{0,1}^2$, $1 \leq p \leq \infty$ and $t \geq 0$,
\begin{equation*}
\Norm{\exp^{0, \kappa} \, \re^{t \cL} u_0}_{W^{k, p}(\R)} \leq C\re^{-\eta t} \, \norm{\exp^{0, \kappa} u_0}_{W^{k, p}(\R)}.
\end{equation*}
\end{proposition}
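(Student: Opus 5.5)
The plan is to work directly at the level of the Green kernel, using the estimates of Propositions \ref{p:green-kernel-bounds-1} and \ref{p:green-kernel-bounds-2}, and to conclude with a Schur test. Here $\exp^{0,\kappa}$ is the weight equal to $1$ for $x\le 0$ and $\re^{\kappa x}$ for $x\ge 0$, so that $\exp^{0,\kappa}(x)=\re^{\kappa x_+}$. Writing $w=\exp^{0,\kappa}u_0$, we have
\begin{equation*}
\exp^{0,\kappa}(x)\,(\re^{t\cL}u_0)(x)=\int_\R \re^{\kappa(x_+-y_+)}\,G(t,x,y)\,w(y)\,\d y ,
\end{equation*}
so it suffices to show that the conjugated kernel $\re^{\kappa(x_+-y_+)}G(t,x,y)$ satisfies
\begin{equation*}
\sup_x\int_\R\absolute{\cdot}\,\d y+\sup_y\int_\R \absolute{\cdot}\,\d x\le C\re^{-\eta t}.
\end{equation*}
The Schur test then gives the claim for every $1\le p\le\infty$ when $k=0$. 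The weight $\kappa$ is chosen smaller than (say half of) the decay constant appearing in Propositions \ref{p:green-kernel-bounds-1} and \ref{p:green-kernel-bounds-2}; call that constant $\kappa_G$.

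For $t\ge 1$ I will bound each piece of the decomposition $G=\gi\pi+\Gii+\Giii+\Giv$ separately. For $\Giv$ the elementary inequality $\absolute{x_+-y_+}\le\absolute{x-y}$ gives
\begin{equation*}
\re^{\kappa(x_+-y_+)}\,\re^{-\kappa_G t}\re^{-\kappa_G\absolute{x-y}}\le \re^{-\kappa_G t}\re^{-(\kappa_G-\kappa)\absolute{x-y}},
\end{equation*}
which is integrable in either variable. For the three transported pieces, which are supported in $\Set{y>x}$, the weight factor satisfies $\re^{\kappa(x_+-y_+)}\le 1$, and I will split into cases.
\begin{itemize}
\item When $x<y<0$, the improved bound $C\re^{-\kappa_G t}\re^{-\kappa_G\absolute{x-y}}$ of Proposition \ref{p:green-kernel-bounds-1} applies directly.
\item When $y>0$, I use $\re^{\kappa(x_+-y_+)}\le \re^{-\kappa\min(y-x,\,y)}$, which is the key gain. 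If in addition $y-x\ge \frac{\sigma}{2}t$, then $\min(y-x,y)$ is either $y-x\ge \frac\sigma2 t$ or $y\ge y-x\ge\frac\sigma2 t$ (since then $x\le 0$). In both subcases the weight yields a factor $\re^{-\kappa\sigma t/4}$, and the remaining $\re^{-\kappa(y-x)/2}$ restores integrability. If instead $y-x\le\frac{\sigma}{2}t$, the Gaussian factor in $g$ gives $\exp(-\kappa_G(\sigma t/2)^2/(2t))\le \re^{-ct}$. Here I also use the bound $g\le C$ for $t\ge1$.
\end{itemize}
The terms $\Giii$ carry an additional localization factor, which is harmless in this argument.

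For $k=1$, I differentiate the conjugated expression. The derivative of the weight is bounded by $\kappa\exp^{0,\kappa}$, so that term is handled by the $k=0$ bound. For $\partial_x G$ I use Proposition \ref{p:green-kernel-bounds-2}: the absolutely continuous parts obey the same bounds as above, with an extra factor $(t+\absolute{x-y})^{-1/2}\le 1$. The atomic parts are multiples of $\Dirac_x(y)$, so the weight factor equals $1$ on them and their coefficient is $\le C\re^{-\kappa_G t}$.

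Short times $t\in[0,1]$ are the main obstacle. There the kernel bounds are singular near the diagonal ($g\sim t^{-1/2}$), and the $W^{1,p}\to W^{1,p}$ bound cannot come from a crude kernel bound. My first attempt is to avoid kernels altogether: $v=\exp^{0,\kappa}\re^{t\cL}u_0$ solves the conjugated equation $v_t=\exp^{0,\kappa}\cL(\exp^{0,-\kappa}v)$, which is again uniformly parabolic. Its principal part is $\ell_2\partial_{xx}$, and its lower-order coefficients are bounded, since $\exp^{0,\kappa}$ is Lipschitz and its second derivative is a bounded measure that can be smoothed by replacing the weight with an equivalent $\cC^2$ one. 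Standard parabolic estimates then give $\norm{v(t)}_{W^{k,p}}\le C\norm{v(0)}_{W^{k,p}}$ on $[0,1]$; for $k=0$ this also follows from the bound $\norm{G(t,x,\cdot)}_{L^1}\le C$ combined with $\re^{\kappa(x_+-y_+)}\le \re^{\kappa\absolute{x-y}}$ and the local Gaussian localization. Combining the short- and long-time estimates through the semigroup property yields the stated bound, with $\eta$ the minimum of the rates obtained above.
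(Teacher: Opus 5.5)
\textbf{Gap in the region $x<0<y$.} Take your case $y>0$, $y-x\ge\frac{\sigma}{2}t$. You assert that if $\min(y-x,y)=y$, then $y\ge y-x$ ``since $x\le 0$''. But $x\le 0$ gives the reverse inequality $y\le y-x$, so the weight produces no gain there.

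Concretely, take $y\in(0,1)$ and $x=y-\sigma t$:
\begin{itemize}
\item the weight factor is $\re^{\kappa(x_+-y_+)}=\re^{-\kappa y}\ge\re^{-\kappa}$;
\item the improved bound of Proposition~\ref{p:green-kernel-bounds-1} is unavailable, because it requires $y<0$;
\item the stated bounds for $\gi$, $\Gii$, $\Giii$ carry no localization in $x$ (note $\exp^{0,-\kappa}(x)=1$ for $x<0$).
\end{itemize}
From the piecewise bounds, the Schur quantities are then at best $\sup_x\int\cdot\,\d y\lesssim t^{-1/2}$ and $\sup_y\int\cdot\,\d x\lesssim 1$, since the Gaussian in $x$ has unit mass. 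No exponential decay follows.

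Finer case-splitting cannot repair this. $\gi(t,x,y)\pi$ is a translation-invariant Gaussian in $y-x$ centred at $\sigma t$, so the weighted piece $\re^{\kappa(x_+-y_+)}\gi\pi$ really is of size $t^{-1/2}$ at such points. The full weighted kernel is small there only through cancellation between the $\Ki$, $\Kii$ and $\Kiii$ parts.

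\textbf{Missing idea.} Do not split the kernel when $x<0$. By definition,
$\Ki+\Kii+\Kiii=\psip{1}(\lambda,x)\,\dphip{1}(\lambda,y)\,\ind{y>x}$.
\begin{itemize}
\item Since $\psip{1}=\phim{1}$, it decays like $\re^{r(\lambda)x}$ as $x\to-\infty$.
\item For $y>0$, $\absolute{\dphip{1}(\lambda,y)}\lesssim\re^{-\real{\nup{1}(\lambda)}\,y}$.
\end{itemize}
On a contour in $\Set{\real{\lambda}\le-\eta}$ near the origin, $\real{\nup{1}(\lambda)}\ge -C\eta$, because $\partial_\lambda\nup{1}(0)=1/\sigma$. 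For $\eta$ small, the factor $\re^{-\kappa y}$ absorbs this growth. This gives a Schur-integrable bound of the form $C\re^{-\eta t}\re^{-c\absolute{x}}\re^{-cy}$.

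The paper does this globally. It conjugates to $\cT=\exp^{0,\kappa}\cL\exp^{0,-\kappa}$, whose spatial eigenvalue $\mu_1^+=\nup{1}+\kappa$ now satisfies a spatial gap like all the others. The $\Ki$--$\Kiii$ terms then behave like $\Kiv$ terms, and the whole inverse Laplace contour can be moved into $\Set{\real{\lambda}\le-\eta}$.

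The rest of your argument is sound: the region $0\le x<y$, the $\Giv$ term, the atomic parts, and the short-time step via the conjugated parabolic equation.
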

\begin{proof}
This estimate is based on the fact that using additional weights $\exp^{0, -\kappa}$ further stabilizes the spectrum of $\cL$, thus leading to exponential temporal decay from spectral gap. In the following, we briefly describe how the material from previous sections can be used to obtain such statement. 

First introduce the weighted operator $\cT \deq \exp^{0,\kappa} \cL \exp^{0,-\kappa}$, and denote $\mu_j^{\pm}(\lambda)$ its spatial eigenvalues $j\in \Set{-n, \dots, -1, 1, \dots, n}$, that are the solutions to the dispersion relations
\begin{equation*}
\det\left(\lambda - \cT^\pm[\mu] \right) = 0.
\end{equation*}
Then notice by standard computation that $\cT^+[\mu] = \cL^+[\mu+\kappa]$ and $\cT^-[\mu] = \cL^-[\mu]$, respectively ensuring that $\mu_j^+ = \nup{j} + \kappa$ and $\mu_j^- = \num{j}$. This last expression allow to obtain a bound as \eqref{e:spatial-gap} on all $\mu_j^\pm$, including $\mu_1^+$, provided that $\kappa$ is small enough. Then, all material from section \ref{ss:ODE-solution} up to here straightforwardly adapt except that $\nombre{1}$, $\nombre{2}$ and $\nombre{3}$ terms now behave as $\nombre{4}$-terms. In particular, one can use integral paths in $\Set{\lambda \in \C : \real(\lambda)\leq -\eta}$ for $\eta$ small enough, thus leading to the claimed exponential temporal decay when $k = 0$. 

When $k = 1$, the estimates holds true for short time $t \in [0,1]$ from existence result of parabolic semigroups \cite{Pazy-83}. When $t\geq 1$, we use the improved Proposition \ref{p:green-kernel-bounds-2}. 
\end{proof}

\begin{proposition}
\label{p:temp-decay}
There exists positive constants $\sigma$ and $C$ such that for all $p \in [1, +\infty]$, for all $\rho:\R \to [1,+\infty)$ non-decreasing, for all $(n,k,j) \in \Set{0,1}^3$ and all $t \geq 0$,
\begin{equation*}
\Norm{\partial_t^n \, \partial_x^k \, \re^{t\cL} \, \partial_y^j v_0}_{L^p(\R)} \leq \frac{C}{t^\frac{n + k + j}{2}} \bigg( \frac{1}{\rho(\sigma t / 4)} \Norm{\rho \, \pi \, v_0}_{L^p(\R)} + \frac{1}{\sqrt{1 + t}} \Norm{v_0}_{L^p(\R)} \bigg).
\end{equation*}
\end{proposition}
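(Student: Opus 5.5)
We use the semigroup decomposition \eqref{e:semigroup-decomp}, $\re^{t\cL} = \si(t)\pi + \Sii(t) + \Siii(t) + \Siv(t)$, and estimate each piece separately in $L^p$. Each piece is an integral operator, so the tool is a Schur-type bound: the $L^p\to L^p$ norm of an operator with kernel $k(x,y)$ is controlled by
\[
\sup_x \norm{k(x,\cdot)}_{L^1}+\sup_y\norm{k(\cdot,y)}_{L^1},
\]
and we interpolate between $p=1$ and $p=\infty$.

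Derivatives are handled as follows. The $\partial_t^n$ derivatives and $\partial_x^k$ derivatives fall on the kernel. The $\partial_y^j$ derivatives are moved onto the kernel by integration by parts. Together these produce the absolutely continuous and atomic parts estimated in Proposition \ref{p:green-kernel-bounds-2}. The absolutely continuous parts gain $(t+\absolute{x-y})^{-(n+k+j)/2}\le t^{-(n+k+j)/2}$ compared with Proposition \ref{p:green-kernel-bounds-1}. The atomic parts are $\cO(\re^{-\kappa t})$ multiplication operators. For $t\le 1$ we use instead the bound $\norm{G(t,x,\cdot)}_{L^1}\le C$ of Proposition \ref{p:lebesgue-bound-G}, together with standard parabolic smoothing, which gives the factor $t^{-(n+k+j)/2}$. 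From now on we assume $t\ge1$.

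The terms $\Sii$, $\Siii$, $\Siv$ are bounded by $\norm{v_0}_{L^p}$ alone. For $\Siv$, the kernel bound $C\re^{-\kappa t}\re^{-\kappa\absolute{x-y}}$ has integrable rows and columns, so this term is exponentially decaying. For $\Sii$, the kernel is bounded by $C\min(1,(t+y-x)^{-1/2})g\,\ind{y>x}$, and $g(t,x,\cdot)$ and $g(t,\cdot,y)$ have $L^1$ norms bounded uniformly. This yields the factor $t^{-1/2}\le C(1+t)^{-1/2}$, in both the $x$-integrals and the $y$-integrals.

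$\Siii$ is the delicate term. Its kernel is $g\,(\exp^{0,-\kappa}(x)+\exp^{0,-\kappa}(y))\ind{y>x}$, which has no explicit $t^{-1/2}$ factor. We plan to use the translation of the Gaussian: $g$ is concentrated on $y-x\approx\sigma t$, so at least one of $x$ and $y$ lies at distance $\gtrsim t$ from the origin, or the Gaussian tail gives $\re^{-ct}$. Concretely, if $y$ is near $0$ then $x\approx y-\sigma t\le-\sigma t/2$. In that region the improved bound of Proposition \ref{p:green-kernel-bounds-1} for $x<y<0$, combined with the left Gaussian tail when $y\ge0$, gives exponential decay. Otherwise the factor $\exp^{0,-\kappa}$ evaluated near the Gaussian center is $\cO(\re^{-\kappa\sigma t/2})$. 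So the $\Siii$ contribution decays exponentially. This splitting is the step we expect to require the most care: each region, for both $\sup_x\int\,\d y$ and $\sup_y\int\,\d x$, must be treated consistently.

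Finally, the principal term $\si(t)\pi v_0$ carries the weight. Write $\pi v_0 = \rho^{-1}(\rho\pi v_0)$. The operator has kernel $\gi(t,x,y)\rho(y)^{-1}$, acting on $\rho\pi v_0$. For $p=\infty$, estimate \eqref{e:estimate-gi-rho} gives the factor $1/\rho(x+\sigma t/2)$. This is not uniformly small for very negative $x$. For $x\le -\sigma t/4$ we therefore redo the splitting of the proof of Proposition \ref{p:lebesgue-bound-G}: on the central region $y\ge x+\sigma t/2$, and monotonicity of $\rho$ combined with an exponential bound of $\gi$ when $x<y<0$ gives the factor $\rho(\sigma t/4)^{-1}$.

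The remaining tails decay exponentially in $t$. They are not controlled by $1/\rho(\sigma t/4)$ directly, but they can be absorbed into the $\norm{v_0}$ term, because $\norm{\pi v_0}_{L^p}\le C\norm{v_0}_{L^p}$ and $\rho\ge1$. The same argument applied to $\sup_y\int\,\d x$ gives the $p=1$ case: the variable $x$ ranges over $y-x\approx\sigma t$, and there $\rho(y)\ge\rho(\sigma t/4)$ unless $y<\sigma t/4$. In that case $x<-3\sigma t/4$, and the exponential bound for $x<y<0$, or the Gaussian tail, applies again. Interpolating between $p=1$ and $p=\infty$ gives the claimed estimate for all $p\in[1,+\infty]$.
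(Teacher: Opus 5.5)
Your direct, piece-by-piece estimate breaks down in the region $x<0<y$. This is not a bookkeeping problem: there the individual terms of \eqref{e:semigroup-decomp} do not decay, only their sum does.

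Take $\Siii$ with $x=-\sigma t/2$ and $y$ near the Gaussian centre $x+\sigma t=\sigma t/2$. The kernel bound is $g\,\big(\exp^{0,-\kappa}(x)+\exp^{0,-\kappa}(y)\big)$, which is a \emph{sum}, and $\exp^{0,-\kappa}(x)=1$ for $x<0$. So evaluating $\exp^{0,-\kappa}$ at the Gaussian centre gains nothing, and $\sup_x\int\absolute{\Giii(t,x,y)}\,\d y$ is only $\cO(1)$. The bound is not merely lossy. For $x<0$ one has $\psip{1}=\phim{1}$, which decays like $\re^{\real{\num{1}(\lambda)}x}$. Hence at $x=-\sigma t/2$ you get $\Giii=-(\gi\pi+\Gii)$ up to exponentially small terms, which has genuine Gaussian size. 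For a smooth $v_0$ equal to $\vp(0)$ on $[0,+\infty)$, $\Siii(t)v_0(-\sigma t/2)$ stays of order one.

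The same obstruction hits $\si(t)\pi v_0$ at $x\approx-\sigma t$. The Gaussian window $\absolute{y}\lesssim\sqrt t$ meets $[0,\sigma t/4)$, where you have neither the factor $\rho(\sigma t/4)^{-1}$ nor any exponential factor. Moreover, the exponential bound on $\gi$ for $x<y<0$ that you invoke cannot hold for $\gi$ alone, even though it is stated termwise, because $\gi$ depends only on $(t,y-x)$. What the proof of Proposition \ref{p:small-lambda} actually gives is a bound on the sum $\Ki+\Kii+\Kiii=\psip{1}(\lambda,x)\,\dphip{1}(\lambda,y)\,\ind{y>x}$, i.e.\ on $\Gi+\Gii+\Giii$.

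The missing idea is the semigroup splitting $\re^{t\cL}=\re^{\frac t2\cL}\re^{\frac t2\cL}$ used in the paper, which lets you use each piece only where it is good.

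At time $t/2$, \eqref{e:estimate-gi-rho} bounds $\si(t/2)\pi v_0$ by $C\rho(\sigma t/4)^{-1}\norm{\rho\pi v_0}$ on $x\ge0$, and merely by $C\norm{v_0}$ on $x\le0$. Also $\Siii(t/2)v_0=\cO(\re^{-\kappa x}\norm{v_0})$ on $x\ge0$, because there $y>x>0$.

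In the second half-step you use the full kernel. On data supported in $(-\infty,0]$ it is exponentially small: $\Gi,\Gii,\Giii$ vanish for $y<x$, their sum is exponentially small for $x<y<0$, and $\Giv$ is exponentially small. On $\Siii(t/2)v_0$, whose $\exp^{0,\kappa}$-weighted norm is bounded, Proposition \ref{p:improved-decay} yields a factor $\re^{-\eta t/2}$. That proposition is what converts the right-localisation of $\Siii$ into temporal decay, and it is absent from your plan.

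A one-step argument could be rescued only by not splitting the kernel for $x<0$ and estimating $\phim{1}(\lambda,x)\,\dphip{1}(\lambda,y)$ directly, which needs kernel bounds the paper does not provide. Your treatment of $\Sii$, $\Siv$, the derivatives and short times is fine.
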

\begin{proof}
We start with $n = k = j = 0$, which corresponds to an $L^1$-control of the Green kernel. The key point to handle $\Si$ and $\Siii$ is to use the semigroup property $\re^{t \cL} \, v_0 = \re^{\frac{t}{2} \cL} \, \re^{\frac{t}{2} \cL} \, v_0$. We first look at the semigroup from $0$ to $t/2$. From Proposition \ref{p:lebesgue-bound-G}, we easily obtain estimates for each term in \eqref{e:semigroup-decomp}. To control $\Si$ and $\Siii$, we must treat differently $(-\infty, 0)$ and $(0,+\infty)$ to get rid of the spatial dependence (for exemple in \eqref{e:estimate-gi-rho}):
\begin{align}
\nonumber
\Norm{\Si(t/2) v_0}_{L^\infty((0,+\infty))} \leq {}& \sup_{y\geq 0} \Norm{\frac{\gi(t/2, y, \cdot)}{\rho}}_{L^1(\R)} \norm{\rho \, \pi \, v_0}_{L^\infty(\R)}, \\
\label{e:intermediate-bound-Si}
\leq {}& \frac{C}{\rho(\sigma t / 4)} \norm{\rho \, \pi \, v_0}_{L^\infty(\R)}, \\[1em]
\nonumber
\Norm{\Si(t/2) v_0}_{L^\infty((-\infty, 0))} \leq {}& \sup_{y\leq 0} \Norm{\Gi(t/2, y, \cdot)}_{L^1(\R)} \norm{v_0}_{L^\infty(\R)} \leq \norm{v_0}_{L^\infty(\R)}.
\end{align}
Similarly 
\begin{equation*}
\Norm{\re^{\kappa \cdot} \, \Siii(t/2) v_0}_{L^\infty((0,+\infty))} + \Norm{\Siii(t/2) v_0}_{L^\infty((-\infty, 0))} \leq  C \norm{v_0}_{L^\infty(\R)}.
\end{equation*}
Estimates on $\Sii$ and $\Siv$ are straightforward:
\begin{align*}
\Norm{\Sii(t/2) v_0}_{L^\infty(\R)} \leq {}& \frac{C}{\sqrt{1 + t/2}} \norm{v_0}_{L^\infty(\R)}, \\[1em]
\Norm{\Siv(t/2) v_0}_{L^\infty(\R)} \leq {}& C \re^{-\kappa t} \norm{v_0}_{L^\infty(\R)}.
\end{align*}
With these estimates at hand, we are now ready to control the semigroup from $t/2$ to $t$. For example \eqref{e:intermediate-bound-Si} ensures that
\begin{equation*}
\Norm{\re^{\frac{t}{2} \cL} (\ind{y\geq 0} \, \Si(t/2) \, v_0)}_{L^\infty(\R)} \leq \frac{C}{\rho(\sigma t / 4)} \norm{\rho \, \pi \, u_0}_{L^\infty(\R)},
\end{equation*}
while the improved decay of Proposition \ref{p:lebesgue-bound-G} together with $x < y$ provides
\begin{equation*}
\Norm{\re^{\frac{t}{2} \cL} (\ind{y\leq 0} \, \Si(t/2) \, v_0)}_{L^\infty(\R)} \leq C \re^{-\kappa t} \norm{v_0}_{L^\infty(\R)}.
\end{equation*}
To control $\Siii$ we proceed similarly and use Proposition \ref{p:improved-decay}:
\begin{align*}
\Norm{\re^{\frac{t}{2}\cL} \Siii(t/2) v_0}_{L^\infty(\R)} \leq {}& C \re^{-\eta \frac{t}{2}} \bigg(\Norm{\re^{\kappa \cdot} \, \Siii(t/2) v_0}_{L^\infty((0,+\infty))} + \Norm{\Siii(t/2) v_0}_{L^\infty((-\infty, 0))}\bigg), \\[1em]
\leq {}& C \re^{-\eta \frac{t}{2}} \norm{v_0}_{L^\infty(\R)}.
\end{align*}
The estimates of $\Sii$ and $\Siv$ straightforwardly follow from $\sup_{x\in \R} \norm{G(t,x,\cdot)}_{L^1(\R)} < +\infty$:
\begin{align*}
\Norm{\re^{\frac{t}{2} \cL} \, \Sii(t/2) \, v_0}_{L^\infty(\R)} \leq {}& \frac{C}{\sqrt{1 + t/2}} \norm{v_0}_{L^\infty(\R)}, \\
\Norm{\re^{\frac{t}{2} \cL} \, \Siv(t/2) \, v_0}_{L^\infty(\R)} \leq {}& C \re^{-\kappa \frac{t}{2}} \norm{v_0}_{L^\infty(\R)}.
\end{align*}
Gathering all four estimates, this case is complete.

When $n = 1$, the time derivative hits either of the two kernels $G(t/2, x, y)$ or $G(t/2, y, z)$, improving temporal decay according to Proposition \ref{p:green-kernel-bounds-2}. When $j = 1$, we integrate by parts with respect to space:
\begin{equation*}
\int_\R G(t/2, y, z) \partial_z u_0 \d z = - \int_\R \partial_z G(t/2, y, z) u_0 \d z,
\end{equation*}
again improving temporal decay. When $k = 1$ and $t\geq 1$, the $x$-derivative hits the first Green kernel improving decay as stated in Proposition \ref{p:lebesgue-bound-G}.
\end{proof}

\section{Nonlinear study}
We want to construct classical solutions to \eqref{e:main} in $\buc{0}$ spaces. For $T> 0$, let 
\begin{equation*}
X_T \deq \cC^1((0,T], \buc{1}(\R, \C^n) \ \cap \ \cC^0([0,T], \buc{3}(\R, \C^n).
\end{equation*}
We then decompose the solution to \eqref{e:main} as
\begin{equation}
\label{e:correction-ansatz}
u(t, x + ct + x_\infty) = \uu(x) + \varOmega(x) v(t, x).
\end{equation}
We will construct the phase deformation $x_\infty$ and the shape deformation $v$ from the initial value $u(0, \cdot)$.

\begin{lemma}
Recall notation \eqref{e:Taylor-expansion} and let $T > 0$. Assume that $v \in X_T$ satisfies
\begin{equation}
\label{e:correction-dynamic}
v_t = \cL v + \cN(\cdot, v),
\end{equation}
with 
\begin{equation*}
\cN(x, v) = \frac{1}{\varOmega(x)} \bigg( \big(T_{2, f}(\uu(x), \varOmega(x) v)\big)_x + T_{2, g}(\uu(x), \varOmega(x) v) \bigg), 
\end{equation*}
Then \eqref{e:correction-ansatz} provides $u \in X_T$ satisfying \eqref{e:main}. 
\end{lemma}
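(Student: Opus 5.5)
The lemma is a direct computation: I will substitute the ansatz \eqref{e:correction-ansatz} into \eqref{e:main}, use the profile equation \eqref{e:profile-eq} to cancel the zeroth-order terms, and recognise what remains as $\varOmega(\cL v + \cN(\cdot, v))$. Regularity is handled separately, by checking that the map $v\mapsto u$ preserves the class $X_T$.

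\textbf{Step 1: moving frame.} Set $\tilde u(t,x) \deq u(t, x + ct + x_\infty)$. Because \eqref{e:main} is translation invariant, $u$ solves \eqref{e:main} if and only if
\begin{equation*}
\tilde u_t = d\tilde u_{xx} + c\tilde u_x + (f(\tilde u))_x + g(\tilde u).
\end{equation*}
With $\tilde u = \uu + w$ and $w \deq \varOmega v$, the time derivative is $\tilde u_t = w_t = \varOmega v_t$, since neither $\uu$ nor $\varOmega$ depends on $t$.

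\textbf{Step 2: expansion of the right-hand side.} I will write $f(\uu + w) = f(\uu) + J_f(\uu)w + T_{2,f}(\uu,w)$, and similarly for $g$, using \eqref{e:Taylor-expansion}. The right-hand side then becomes
\begin{equation*}
\big[d\uu'' + c\uu' + (f(\uu))' + g(\uu)\big] + \big[d w_{xx} + c w_x + (J_f(\uu)w)_x + J_g(\uu)w\big] + (T_{2,f}(\uu,w))_x + T_{2,g}(\uu,w).
\end{equation*}
The first bracket vanishes by \eqref{e:profile-eq}. Expanding $(J_f(\uu)w)_x = J_f(\uu)w_x + (J_f(\uu))_x w$ shows that the second bracket is exactly $\cA w$. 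Since $\cL = \varOmega^{-1}\cA\varOmega$, this gives $\cA(\varOmega v) = \varOmega\,\cL v$. Dividing by $\varOmega$, which is nonvanishing because $\absolute{\varOmega}\in(0,1]$, the equation for $\tilde u$ becomes $v_t = \cL v + \cN(\cdot, v)$, which is exactly \eqref{e:correction-dynamic}. Every step is an equivalence, so a $v$ satisfying \eqref{e:correction-dynamic} yields a $u$ satisfying \eqref{e:main} pointwise.

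\textbf{Step 3: regularity, the only real check.} I need $u\in X_T$ whenever $v\in X_T$. Multiplication by $\varOmega\in\buc{2}$ keeps $\buc{1}$ functions in $\buc{1}$. For $\buc{3}$, however, $\varOmega v$ involves $\varOmega'''$, so I will use the smoother explicit weight. Lemma \ref{l:front} and the subsequent remark give $\varOmega = \absolute{\uu - U_+}$ up to the equivalence allowed there, and I can choose $\varOmega$ to be $\cC^\infty$ with bounded derivatives on $[-1,1]$ and exactly exponential or constant outside; all its derivatives are then bounded. Adding $\uu$, which lies in $\buc{3}$ by the profile ODE and $f,g\in\cC^3$, keeps $\tilde u$ in the required spaces. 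Translating by $ct + x_\infty$ preserves $\buc{k}$ norms and continuity in $t$. The term $c\tilde u_x$ in the time derivative is handled by $\tilde u\in\cC^0([0,T],\buc{3})$, so $\partial_t u\in\cC^0((0,T],\buc{1})$.

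I expect this bookkeeping on the smoothness of $\varOmega$ to be the only delicate point, and it is resolved by choosing a smooth representative in the weight's equivalence class.
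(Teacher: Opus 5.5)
Your proposal is correct and follows the same argument as the paper's one-line proof. You substitute the ansatz in the moving frame, cancel the zeroth-order terms with \eqref{e:profile-eq}, recognise the linear part as $\cA(\varOmega v)=\varOmega\,\cL v$ and the remainder as $\varOmega\,\cN(\cdot,v)$, then divide by $\varOmega\neq 0$. Your regularity check goes further than the paper's bare claim that the computation is licit: you note that $u\in\cC^0([0,T],\buc{3})$ needs $\varOmega$ smoother than the $\buc{2}$ granted by Assumption \ref{a:point-spectrum}, and you fix this by taking a smooth interpolation on $[-1,1]$; the assumption allows this choice, and the paper implicitly needs it anyway in Lemma \ref{l:local-existence}.
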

\begin{proof}
Inserting \eqref{e:correction-ansatz} into \eqref{e:main}, we directly obtain \eqref{e:correction-dynamic}. This computation is licit since $v\in X_T$.
\end{proof}

\begin{lemma}
\label{l:non-linear-bound}
There exists $\kappa>0$ such that 
\begin{equation*}
\Norm{\frac{1}{\varOmega} \cN(\cdot, v)}_{L^\infty(\R)} \leq C\norm{v}_{W^{1,\infty}(\R)}^2. 
\end{equation*}
\end{lemma}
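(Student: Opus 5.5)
The plan is a direct pointwise estimate on $\cN$ using Taylor's formula with uniform constants, combined with the structure of the weight $\varOmega$. Taylor bounds with uniform constants only hold on bounded sets, so I will work under a smallness assumption $\norm{v}_{W^{1,\infty}(\R)} \leq 1$ (or let $C$ depend on an a priori bound on $v$). This is harmless for the nonlinear argument, where $v$ is small. I will prove the stronger pointwise bound
\begin{equation*}
\absolute{\cN(x, v)} \leq C \absolute{\varOmega(x)} \big(\absolute{v(x)} + \absolute{v_x(x)}\big)^2 .
\end{equation*}
Since $\cN$ already carries one factor $\frac{1}{\varOmega}$, dividing once more by $\varOmega$ exactly consumes the two factors of $\varOmega$ produced by the quadratic terms, which gives the statement.

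The first step is a preliminary fact on the weight: $\absolute{\varOmega'(x)} \leq C \absolute{\varOmega(x)}$ for all $x \in \R$. For $x \geq 1$ the ratio equals $\absolute{-\kappa + \ri \xi_0}$. For $x \leq -1$ we have $\varOmega' = 0$. On the compact set $[-1,1]$ the ratio is continuous because $\varOmega \in \buc{2}$ and $\absolute{\varOmega} > 0$ (Assumption \ref{a:point-spectrum}), hence bounded. Writing $h = \varOmega v$, this gives $\absolute{h} \leq \absolute{\varOmega}\absolute{v}$ and $\absolute{h_x} \leq C\absolute{\varOmega}(\absolute{v} + \absolute{v_x})$. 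We also have $\absolute{h}, \absolute{h_x} \leq C$, since $\absolute{\varOmega} \leq 1$.

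The second step is the growth term. The profile $\uu$ is bounded, since it converges at $\pm\infty$, and $h$ is bounded. Hence $\uu$ and $\uu + h$ stay in a fixed compact set, on which $g \in \cC^3$ has bounded second derivatives. Taylor's formula with integral remainder then gives $\absolute{T_{2,g}(\uu, h)} \leq C\absolute{h}^2 \leq C\absolute{\varOmega}^2\absolute{v}^2$.

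The third step, which is the only point needing care, is the flux term, because $(T_{2,f}(\uu, h))_x$ involves derivatives. Differentiating directly gives
\begin{equation*}
\big(T_{2,f}(\uu,h)\big)_x = \big(J_f(\uu+h) - J_f(\uu) - D^2 f(\uu)[h,\cdot\,]\big)\uu' + \big(J_f(\uu+h) - J_f(\uu)\big) h_x .
\end{equation*}
Since $f \in \cC^3$ and $\uu'$ is bounded, the first bracket is $\bigo(\absolute{h}^2)$ on the compact set. The second term is bounded by $C\absolute{h}\absolute{h_x}$. Using the first step, both pieces are $\leq C\absolute{\varOmega}^2(\absolute{v} + \absolute{v_x})^2$.

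Summing the two contributions yields $\absolute{\varOmega}^{-1}\absolute{\cN} \leq C(\absolute{v} + \absolute{v_x})^2$, and taking the supremum in $x$ concludes. No use of $\kappa$ beyond the form of $\varOmega$ is needed. The main obstacle is only the bound $\absolute{\varOmega'} \lesssim \absolute{\varOmega}$ across the transition region, which is settled by compactness and non-vanishing.
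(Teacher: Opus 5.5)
Your proof is correct and follows the same route as the paper. The paper bounds the second-order Taylor remainders in the direction $\varOmega v$ to get $|\varOmega \cN| \leq C(|\varOmega v|^2 + |\varOmega v_x|^2)$ pointwise; you additionally make explicit two details it leaves implicit: the bound $|\varOmega'| \leq C|\varOmega|$ needed to control $(\varOmega v)_x$ in the flux term, and the a priori bound on $v$ that makes the Taylor constants uniform (harmless, since the lemma is only applied to small $v$).
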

\begin{proof}
Since $\varOmega \cN$ is the sum of second order Taylor expansions at $\uu$ in the direction $\varOmega v$, we get $C>0$ such that uniformly on $\R$
\begin{equation*}
\absolute{\varOmega \cN} \leq C \left(\absolute{\varOmega v}^2 + \absolute{\varOmega v_x}^2 \right), 
\end{equation*}
leading to the claimed bound.
\end{proof}
\begin{lemma}
\label{l:local-existence}
Let $v_0 \in \buc{2}(\R)$. There exists $T>0$ and a unique $v\in X_T$ that solves \eqref{e:correction-dynamic} with initial data $v(0, \cdot) = v_0$. Furthermore, it satisfies
\begin{equation}
\label{e:Duhamel}
v(t) = \re^{t \cL} v_0 + \int_0^t \re^{(t - s) \cL} \cN(\cdot, v(s)) \d s.
\end{equation}
\end{lemma}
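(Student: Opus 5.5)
The plan is a standard semilinear parabolic argument for the Duhamel form \eqref{e:Duhamel}. First note that $\cL = \ell_2\partial_{xx} + \ell_1(x)\partial_x + \ell_0(x)$ has principal part $\ell_2 = d$, with $\real d$ positive definite, and coefficients in $\buc{1}$. By Lemma~\ref{l:front} and the smoothness of $\varOmega$, $\ell_1$ and $\ell_0$ converge exponentially to their limits. Hence $\cL$ generates an analytic semigroup on $\buc{0}(\R,\C^n)$, and also on $\buc{1}$ and $\buc{2}$, with domain continuously embedded in $\buc{2}$ \cite{Pazy-83}. Equivalently, the short-time bound $\norm{G(t,x,\cdot)}_{L^1}\le C$ of Proposition~\ref{p:lebesgue-bound-G} together with the derivative bounds of Proposition~\ref{p:green-kernel-bounds-2} give, for $t\in(0,1]$,
\begin{equation*}
\norm{\re^{t\cL} w}_{W^{k+1,\infty}} \le C t^{-\frac12}\norm{w}_{W^{k,\infty}},
\qquad
\norm{\re^{t\cL} w}_{W^{k,\infty}} \le C \norm{w}_{W^{k,\infty}},
\qquad k\in\Set{0,1,2}.
\end{equation*}

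\textbf{Contraction in $\buc{1}$.} Set $Y_T = \cC^0([0,T],\buc{1})$ and let $\Phi(v)$ denote the right-hand side of \eqref{e:Duhamel}. Expanding the derivative of $T_{2,f}$, one sees that $\cN(x,v)$ is a smooth function of $(x, v, v_x)$ which is quadratic at $0$. On a ball of $Y_T$ it is therefore locally Lipschitz as a map into $\buc{0}$, with
\begin{equation*}
\norm{\cN(v)-\cN(w)}_{L^\infty} \le C(R)\,\big(\norm{v}_{W^{1,\infty}}+\norm{w}_{W^{1,\infty}}\big)\,\norm{v-w}_{W^{1,\infty}}.
\end{equation*}
The weight cancels since $\varOmega$ is bounded with bounded derivatives relative to itself; this is the computation of Lemma~\ref{l:non-linear-bound}. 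Combining this with the smoothing estimate, the Duhamel integral is bounded in $W^{1,\infty}$ by $C\int_0^t (t-s)^{-1/2}\d s = C\sqrt t$. So for $T$ small, depending on $\norm{v_0}_{W^{1,\infty}}$, $\Phi$ maps a ball of $Y_T$ into itself and is a contraction. This yields a unique mild solution, and uniqueness in $X_T \subset Y_T$ follows from the same estimate.

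\textbf{Regularity.} This is the step I expect to be the main technical obstacle, because it must upgrade the mild solution to $v\in X_T$, that is $\cC^0([0,T],\buc{3})\cap \cC^1((0,T],\buc{1})$. I would bootstrap. Differentiate \eqref{e:Duhamel} in $x$, or run the same fixed point in $\cC^0([0,T],\buc{2})$, using that $\cN$ costs one derivative of $v$ while the semigroup gains one at an integrable cost $(t-s)^{-1/2}$. This gives $v\in\cC^0([0,T],\buc{2})$, since $v_0\in\buc{2}$. Then $\cN(v)\in \cC^0([0,T],\buc{1})$ is locally Hölder in time, by the analytic semigroup theory. Standard results for analytic semigroups (Pazy, Chapter 4.3) then give that $v$ is a classical solution, with $v\in\cC^1((0,T],\buc{0})$ and $\cL v\in \cC^0((0,T],\buc{0})$. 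Repeating this one level higher gives $\cC^1$ in time with values in $\buc{1}$ and $\buc{3}$ regularity for positive times. At $t=0$, $\buc{3}$ continuity requires correspondingly regular data. I would note that the statement should be read with the regularity inherited from $v_0$, in line with the $\buc{3}$ hypothesis on $u_0$ in Theorem~\ref{t:main}.

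Finally, any classical solution in $X_T$ satisfies $\partial_s\big(\re^{(t-s)\cL}v(s)\big) = \re^{(t-s)\cL}\cN(v(s))$. Integrating over $[0,t]$ recovers \eqref{e:Duhamel}, which closes the equivalence and the uniqueness claim.
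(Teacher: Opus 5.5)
Your proof is correct and follows essentially the paper's route. The paper checks that $\cL$ generates an analytic semigroup (closed, densely defined, with a resolvent bound from a numerical-range estimate) and that $\cN$ is Lipschitz from $\buc{1}$ to $\buc{0}$, then cites Lunardi's semilinear local existence theorem (Theorem 7.1.2 with $\alpha=\frac12$), whose proof is your contraction in $\cC^0([0,T],\buc{1})$ followed by the analytic-semigroup regularity bootstrap. Your caveat at $t=0$ is also right: $v\in\cC^0([0,T],\buc{3})$ forces $v_0=v(0)\in\buc{3}$, so for $v_0\in\buc{2}$ only $\buc{3}$-continuity on $(0,T]$ can hold, and the paper's citation gives no more than that either.
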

\begin{proof}
As a differential operator whose coefficients lie in $\buc{3}(\R)$, the unbounded operator $\cL$ is closed from $\buc{3}(\R)$ to $\buc{1}(\R)$ with dense domain. In addition, there exists $\lambda_0 \in (0,+\infty)$ such that the resolvent bound
\begin{equation}
\label{e:resolvent-bound-cL}
\Norm{(\lambda - \cL)^{-1}}_{\buc{1}(\R) \to \buc{1}(\R)} \leq \frac{1}{\lambda - \lambda_0}
\end{equation}
holds on a sector. Indeed, when $v \in \buc{1}(\R)$ such that $\norm{v}_{W^{1,\infty}(\R)} = 1$ and $\norm{v}_{L^1(\R)} = 1$, the numerical range estimate
\begin{align*}
\real{\scalp{\cL v, v}}_{L^2(\R)} = {}& -\scalp{d v_y, v_y} + \norm{\ell_0 v^2}_{L^1(\R)}, \\
\leq {}& \norm{\ell_0}_{L^\infty(\R)}.
\end{align*}
holds, and implies the resolvent bound \eqref{e:resolvent-bound-cL}, see \cite[Chap. I, Theorem 3.9]{Pazy-83}. Those two properties guarantee that $\cL$ is the infinitesimal generator of an analytic semigroup \cite[Proposition 2.1.1]{Lunardi-13}.

On the other hand, the map $v \mapsto \cN(\cdot, v)$ is uniformly Lipschitz continuous from $\buc{1}(\R)$ to $\buc{0}(\R)$. 
From this point, the parabolic theory concludes the proof: See \cite[Theorems 7.1.2]{Lunardi-13} with $\alpha = \frac{1}{2}$.
\end{proof}

\begin{lemma}
\label{l:blow-up}
Let $T_\mathrm{max}$ be the maximal time satisfying Lemma \ref{l:local-existence}. If $T_\mathrm{max} < +\infty$, then 
\begin{equation*}
\limsup_{t \to T_\mathrm{max}} \norm{v(t)}_{L^\infty} = +\infty.
\end{equation*}
\end{lemma}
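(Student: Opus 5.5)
We argue by contraposition: assuming $\sup_{t<T_\mathrm{max}} \norm{v(t)}_{L^\infty} \leq R$ with $T_\mathrm{max}<+\infty$, we show that $v$ stays bounded in the space used for local existence (namely $\buc{2}$, or at least $W^{1,\infty}$ plus enough regularity to restart). Lemma \ref{l:local-existence} then applies from an initial time $T_\mathrm{max}-\tau$, with an existence time $\tau$ depending only on that bound, and this contradicts maximality. The standard fact we rely on is that in Lemma \ref{l:local-existence} the existence time depends only on the norm of the data. This follows from the fixed point construction of \cite[Theorem 7.1.2]{Lunardi-13}, since $\cN$ is Lipschitz on bounded sets.

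The key step is to upgrade the $L^\infty$ bound to a $W^{1,\infty}$ bound on $[0,T_\mathrm{max})$. We work with the Duhamel formula \eqref{e:Duhamel} on the finite interval, where no decay is needed. By the short-time estimates (analytic semigroup, or Proposition \ref{p:temp-decay} with $k=1$ and $\rho=1$), $\norm{\partial_x \re^{t\cL} w}_{L^\infty} \leq C t^{-1/2}\norm{w}_{L^\infty}$ for $t\leq T_\mathrm{max}$. The nonlinearity has the form $\cN = \varOmega^{-1}\big((T_{2,f})_x + T_{2,g}\big)$. Writing $\varOmega^{-1}\partial_x(\cdot) = \partial_x(\varOmega^{-1}\cdot) + (\varOmega^{-1})'\varOmega\,\varOmega^{-1}(\cdot)$, and noting that $\varOmega'/\varOmega$ is bounded, we put the derivative onto the semigroup; the case $j=1$ of Proposition \ref{p:temp-decay} costs a factor $(t-s)^{-1/2}$. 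Because $\absolute{\varOmega}\leq 1$, the quantity $\varOmega^{-1}T_{2,f}(\uu,\varOmega v)$ is bounded by $C\absolute{\varOmega}\absolute{v}^2\leq C_R\absolute{v}$, using that $f\in\cC^3$ and $v$ is bounded. The term with $T_{2,g}$ is handled in the same way. This gives
\begin{equation*}
\norm{v(t)}_{W^{1,\infty}} \leq C t^{-1/2}\norm{v_0}_{L^\infty} + C_R\int_0^t (t-s)^{-1/2}\big(1+(t-s)^{-1/2}\big)\norm{v(s)}_{L^\infty}\,\d s.
\end{equation*}
The kernel $(t-s)^{-1}$ is not integrable, so this is the main obstacle. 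We avoid it by estimating the derivative directly: we keep the $\partial_x$ on $v$ inside $\cN$ instead of integrating by parts only once. Indeed $\absolute{\cN}\leq C_R(\absolute{v}+\absolute{v_x})\absolute{v}$. Setting $N(t)=\sup_{s\leq t}\big(s^{1/2}\wedge 1\big)\norm{v_x(s)}_{L^\infty}$, we obtain
\begin{equation*}
N(t)\leq C\norm{v_0}_{W^{1,\infty}} + C_R\int_0^t (t-s)^{-1/2}\big(1+s^{-1/2}N(s)\big)\d s,
\end{equation*}
and a singular Gronwall lemma (Henry type) bounds $N$ on $[0,T_\mathrm{max})$. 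Since $v_0\in\buc{2}$, we can in fact drop the weight $s^{1/2}$ and bound $\norm{v}_{W^{1,\infty}}$ uniformly.

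Finally, we bootstrap to higher regularity. Applying $\partial_x$ once more, and using parabolic smoothing on $[T_\mathrm{max}/2, T_\mathrm{max})$ together with the $\cC^3$ regularity of $f$ and $g$, we get a $\buc{2}$ bound on $v(t)$ for $t$ close to $T_\mathrm{max}$. Restarting Lemma \ref{l:local-existence} from such a time extends the solution beyond $T_\mathrm{max}$, which is the desired contradiction.
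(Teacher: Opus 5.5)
Your argument is correct, but it follows a different route from the paper. The paper proves the lemma in two lines. It quotes the abstract blow-up alternative \cite[Proposition 7.1.8]{Lunardi-13} to get that $\norm{\cN(\cdot, v(t))}_{L^\infty}$ diverges as $t \to T_\mathrm{max}$, and then invokes Lemma~\ref{l:non-linear-bound}. Since that lemma bounds $\cN$ by $\norm{v}_{W^{1,\infty}}^2$, the citation-based argument by itself only gives divergence of $\norm{v(t)}_{W^{1,\infty}}$. That is in fact all the proof of Theorem~\ref{t:main} uses, since the bootstrap controls $\varTheta_1$, a $W^{1,\infty}$ quantity.

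Your a priori estimate supplies exactly the step from $L^\infty$ to $W^{1,\infty}$. Once $\norm{v}_{L^\infty} \leq R$, the bound $\absolute{\cN} \leq C_R \absolute{v}(\absolute{v} + \absolute{v_x})$ holds (using $\absolute{\varOmega} \leq 1$ and $\varOmega'/\varOmega$ bounded). This makes the Duhamel inequality for $v_x$ linear with a $(t-s)^{-1/2}$ kernel, and the singular Gronwall lemma closes it on the finite interval. Continuation then follows because the local existence time depends only on the size of the data.

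So the paper's route is shorter but rests entirely on the citation and yields only the $W^{1,\infty}$ criterion. Yours is self-contained and proves the $L^\infty$ criterion as stated.

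Two cosmetic points. First, the initial integration-by-parts attempt can simply be deleted; its identity for $\varOmega^{-1}\partial_x$ also has a sign slip. Second, the final upgrade to $\buc{2}$ is unnecessary if the local theory is posed with $\cN$ locally Lipschitz from $\buc{1}$ to $\buc{0}$, as the proof of Lemma~\ref{l:local-existence} indicates. In that setting the $W^{1,\infty}$ bound already suffices to restart.
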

\begin{proof}
If $T_\mathrm{max}$ is finite, \cite[Proposition 7.1.8]{Lunardi-13} implies that $\norm{\cN(\cdot, v(t))}_{L^\infty(\R)}$ diverges when $t \to T_\mathrm{max}$. Lemma \ref{l:non-linear-bound} concludes the proof.
\end{proof}

\begin{lemma}
\label{l:estimate-shape-function}
There exists positive constants $\kappa$, $\sigma$, $a$ such that the following holds. Let $v_0 \in \buc{2}(\R)$, and $\rho$ a sub-exponential weight with constants $(\kappa/\sigma, M)$ satisfying $\rho(\sigma t/4) \leq \sqrt{1 + t}$. There exists positive constants $C_1$ and $C_2$ such that: If
\begin{equation*}
\tilde{E}_{0,\rho} \deq \norm{\rho \pi v_0}_{W^{1, \infty}(\R)} + \norm{v_0}_{W^{1, \infty}(\R)}
\end{equation*}
is finite, then
\begin{equation*}
\varTheta_\rho(t) \deq \sup_{s \in [0,t]} \rho(\sigma s)\norm{v(s)}_{W^{1, \infty}(\R)}
\end{equation*}
is continuous and satisfies for all $t\geq 0$
\begin{align}
\label{e:bootstrap-1}
\varTheta_\rho(t) \leq {}& C_1 \left( \tilde{E}_{0,\rho} + M {\varTheta_\rho(t)}^2\right), \\[0.5em]
\label{e:bootstrap-2}
\varTheta_\rho(t) \leq {}& C_2 \big( \tilde{E}_{0,\rho} + M \varTheta_\rho(t) \varTheta_1(t)\big).
\end{align}
\end{lemma}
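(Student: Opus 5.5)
We plan to work directly from the Duhamel formula \eqref{e:Duhamel} given by Lemma \ref{l:local-existence}, estimating each piece in $W^{1,\infty}(\R)$ using the semigroup bounds of Proposition \ref{p:temp-decay}. The bounds will be carried out for $s$ up to the maximal time of Lemma \ref{l:blow-up}. Continuity of $\varTheta_\rho$ follows from $v\in \cC^0([0,T],\buc{3})$, since $\rho$ is monotone and a supremum of a continuous function over $[0,t]$ is continuous in $t$.

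\emph{Linear part.} Proposition \ref{p:temp-decay} with $p=\infty$, $n=j=0$ and $k\in\{0,1\}$ gives the bound on $\re^{t\cL}v_0$. For $t\ge 1$ we use it directly. For $t\le 1$ and $k=1$ we instead use boundedness of the analytic semigroup on $\buc{1}$ from Lemma \ref{l:local-existence}. This yields
\begin{equation*}
\norm{\re^{t\cL}v_0}_{W^{1,\infty}}\le C\Big(\frac{\norm{\rho\pi v_0}_{W^{1,\infty}}}{\rho(\sigma t/4)}+\frac{\norm{v_0}_{W^{1,\infty}}}{\sqrt{1+t}}\Big).
\end{equation*}
Since $\rho(\sigma t/4)\le\sqrt{1+t}$, both terms are bounded by $C\tilde E_{0,\rho}/\rho(\sigma t/4)$. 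Here we rename $\sigma$ (replacing $\sigma/4$ by $\sigma$) so that the decay rate matches the definition of $\varTheta_\rho$.

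\emph{Nonlinear part.} By Lemma \ref{l:non-linear-bound}, $\norm{\cN(v(s))}_{L^\infty}\le C\norm{v(s)}_{W^{1,\infty}}^2$. Since $\absolute{\varOmega}\le1$, this pointwise bound also controls $\rho\,\pi\,\cN$. We then need the derivative in the $W^{1,\infty}$ norm. To avoid derivatives of $\cN$, we use the $k=1$ case of Proposition \ref{p:temp-decay}, which puts the $x$-derivative on the kernel at the cost of $(t-s)^{-1/2}$; this is integrable near $s=t$. Alternatively, since $\cN$ contains a full derivative $(T_{2,f})_x$, one may transfer it onto the kernel via the $j=1$ case, though the weight $1/\varOmega$ makes this less clean; I would try the first route.

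The key estimate is then
\begin{equation*}
\int_0^t \frac{1+(t-s)^{-1/2}}{\rho(\sigma(t-s))}\,\rho(\sigma(t-s))\,\frac{\norm{\cN(v(s))}}{\cdots}\,ds,
\end{equation*}
which we organize as follows. We bound $\norm{\rho\,\pi\,\cN(v(s))}_{L^\infty}$ by $C\rho(\sigma s)^{-2}\varTheta_\rho^2$ for \eqref{e:bootstrap-1}, and by $C\rho(\sigma s)^{-1}\varTheta_\rho\varTheta_1$ for \eqref{e:bootstrap-2}. Here $\rho$ is nondecreasing, so $\rho(\sigma s)\ge1$ absorbs the extra weight in the spatial variable. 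This point is actually delicate: $\rho\pi\cN$ involves the spatial weight $\rho(x)$, not $\rho(\sigma s)$. I would handle it by splitting $\cN$ spatially, using the exponential smallness $\absolute{\varOmega(x)}\lesssim \re^{-\kappa x}$ to absorb $\rho(x)\le M\re^{\eta x}$. This is where sub-exponentiality with constant $\kappa/\sigma$ enters.

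The resulting time convolution is of the form
\begin{equation*}
\int_0^t \frac{1}{\rho(\sigma(t-s)/4)}\,\frac{(1+(t-s)^{-1/2})}{\rho(\sigma s)}\,ds\ \lesssim\ \frac{M}{\rho(\sigma t)}.
\end{equation*}
Controlling this convolution is the main obstacle. It should follow from the second inequality in Definition \ref{d:sub-exp}: combine the exponential decay of the non-transported parts ($\Sii$ decays like $(1+t)^{-1/2}$, while $\Siii$ and $\Siv$ decay exponentially) with the transported part. The key observation is that the Gaussian in $\gi$ moves mass toward $+\infty$ at speed $\sigma$, so the factor $1/\rho$ is evaluated at $y\approx x+\sigma(t-s)$. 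Together with the bound $\rho(\sigma s)$ on the source, this yields an integral $\int_0^t \re^{-\eta(t-s)}\rho(\sigma s)^{-1}ds\le M/\rho(\sigma t)$ after the change of variable $y=\sigma s$. The $(1+t-s)^{-1/2}$ remainder pieces are handled using $\rho\le\sqrt{1+t}$, giving $\int_0^t (1+t-s)^{-1/2}(t-s)^{-1/2}\ldots$ bounded by the same quantity. Summing all contributions gives \eqref{e:bootstrap-1} and \eqref{e:bootstrap-2}, with the $M$ factor coming from the sub-exponential constant.
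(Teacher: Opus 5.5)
There is a genuine gap in your treatment of the Duhamel integral. You estimate $\re^{(t-s)\cL}\cN(\cdot,v(s))$ with Proposition \ref{p:temp-decay}. The time factor that proposition gives is $\rho(\sigma(t-s)/4)^{-1}+(1+t-s)^{-1/2}$, possibly times $(t-s)^{-k/2}$, and it is not integrable in $t-s$, so the convolution does not close.

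Your inequality $\int_0^t \frac{1+(t-s)^{-1/2}}{\rho(\sigma(t-s)/4)\,\rho(\sigma s)}\,\d s\lesssim \frac{M}{\rho(\sigma t)}$ is false:
\begin{itemize}
\item For $\rho\equiv1$ the left-hand side grows like $t$. You need this case to control $\varTheta_1$, hence to get \eqref{e:bootstrap-1} at all.
\item For $\rho(x)=(1+x)^a$ with $a\le\frac12$ it behaves like $t^{1-2a}$ instead of $t^{-a}$.
\end{itemize}
The remainder pieces fail the same way, since $\int_0^t(1+t-s)^{-1/2}\,\d s\sim\sqrt t$ and $\int_0^t(1+t-s)^{-1/2}(t-s)^{-1/2}\,\d s\sim\log t$. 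The hypothesis $\rho(\sigma t/4)\le\sqrt{1+t}$ only helps with the linear term and cannot repair this. Using the spatial localization of $\cN$ merely to absorb the weight in $\norm{\rho\,\pi\,\cN}_{L^\infty}$ gains no decay in $t-s$. The sub-exponential constant in the statement is needed in the time convolution, not in that spatial absorption.

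The missing ingredient is to convert the localization into temporal decay through a spectral gap.
\begin{itemize}
\item By Lemma \ref{l:non-linear-bound}, $\cN$ carries one factor of $\varOmega$. Hence $\norm{\exp^{0,\kappa}\cN(\cdot,v(s))}_{L^\infty}\le C\norm{v(s)}_{W^{1,\infty}}^2$ for small $\kappa$.
\item On such exponentially weighted data the semigroup decays exponentially (Proposition \ref{p:improved-decay}). In the weighted space every piece of the kernel, including $\Sii$, behaves like the $\Siv$ part.
\item This gives $\norm{\partial_x^k\re^{(t-s)\cL}\cN(\cdot,v(s))}_{L^\infty}\le C\re^{-\eta(t-s)}(t-s)^{-k/2}\,\varTheta_\rho(s)\varTheta_1(s)/\rho(\sigma s)$. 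Neither $\pi$ nor the $\rho$-weighted norm plays any role for the nonlinear term.
\item The piece $\int_0^{t-1}\re^{-\eta(t-s)}\rho(\sigma s)^{-1}\,\d s$ is then bounded by $CM/\rho(\sigma t)$ via Definition \ref{d:sub-exp} after the substitution $y=\sigma s$. This is where the sub-exponential constant, matched to the spectral-gap rate, actually enters.
\item The singular piece on $[t-1,t]$ needs a bound on $\rho(\sigma t)/\rho(\sigma(t-1))$. You do not address this, though it also follows from the sub-exponential property.
\end{itemize}
Your heuristic that the transported Gaussian only meets the tail $\re^{-\kappa y}$ of a localized source is the right mechanism. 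It must, however, be applied to every piece of the semigroup, which is exactly what Proposition \ref{p:improved-decay} provides. It cannot be extracted from the $\rho$-weighted bound of Proposition \ref{p:temp-decay}.
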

\begin{proof}
When $t \in [0, 2]$, we can choose $C_i$ and $M$ so large that the estimates hold true. When $t\geq 2$, we use \eqref{e:Duhamel} to estimate $\varTheta$. The linear term is handled with Proposition \ref{p:temp-decay}. The integral term is handled using Proposition \ref{p:improved-decay} and Lemma \ref{l:non-linear-bound}.  
\begin{align*}
\norm{\partial_x^k v(t)}_{L^\infty(\R)} \leq {}& \frac{C}{\rho(\sigma t)} \tilde{E}_{0,\rho} + C \int_0^t \frac{\re^{-\kappa (t - s)}}{(t - s)^\frac{k}{2}} \norm{\exp^{0, \delta} \cN(\cdot, v(s))}_{L^\infty(\R)} \d s, \\
\leq {}& \frac{C}{\rho(\sigma t)} \tilde{E}_{0,\rho} + C\int_0^t \frac{\re^{-\kappa (t - s)}}{(t - s)^\frac{k}{2}} \frac{1}{\rho(\sigma s)} \varTheta_\rho(s) \varTheta_1(s) \d s, 
\end{align*}
The integral term is decomposed as $\int_0^t = \int_0^{t - 1} + \int_{t - 1}^t$, and estimated using Definition \ref{d:sub-exp}:
\begin{align*}
\int_0^{t - 1} \frac{\re^{-\kappa (t - s)}}{(t - s)^\frac{k}{2}} \frac{1}{\rho(\sigma s)} \d s \leq \int_0^{t - 1} \re^{-\kappa (t - s)} \frac{1}{\rho(\sigma s)} \d s \leq \frac{C}{\rho(\sigma (t - 1))}, \\
\int_{t - 1}^t \frac{\re^{-\kappa (t - s)}}{(t - s)^\frac{k}{2}} \frac{1}{\rho(\sigma s)} \d s \leq \frac{1}{\rho(\sigma (t - 1))} \int_{t - 1}^t (t - s)^{-\frac{k}{2}} \d s \leq \frac{C}{\rho(\sigma (t - 1))}. 
\end{align*}
Remark that $s\mapsto \rho(\sigma s)$ is a sub-exponential weight with constant $\kappa$. We now show that the map $\varrho : t \mapsto \frac{\rho(\sigma t)}{\rho(\sigma (t - 1))}$ is bounded. We assume by contradiction it is unbounded on $[1, +\infty)$ and construct a sequence $(t_n)_{n \geq 0}$ as follows. Let $t_0 = 2$. For all $n\geq 1$ we set 
\begin{equation*}
C_n \deq \max(n, \sup_{[1, t_{n - 1} + 1]} \varrho).
\end{equation*}
By the running assumption, there exists $t_n$ such that $\varrho(t_n) \geq C_n$. As a consequence, we have for all $n\geq 1$
\begin{align*}
& \rho(\sigma(t_n)) \geq n \ \rho(\sigma(t_n - 1)), \\
& t_n - 1 \geq t_{n - 1}.
\end{align*}
Because $\rho$ is increasing, it implies $\rho(\sigma t_n) \geq n! \  \rho(\sigma t_0)$, which is a contradiction with Definition \ref{d:sub-exp}. Due to $\varrho$ being bounded, we easily deduce
\begin{equation*}
\rho(\sigma t) \norm{\partial_x^k v(t)}_{L^\infty(\R)}\leq C \left( \tilde{E}_{0,\rho} + M \varTheta_\rho(t)\varTheta_1(t)\right),
\end{equation*}
which conludes the proof.
\end{proof}

\begin{lemma}
\label{l:bootstrap}
Let $a$, $b$ be positive constants such that $a < \frac{1}{4b}$. If $t\mapsto h(t)$ is a continuous and positive map such that $h(0) \leq 2a$ and
\begin{equation*}
h(t) \leq a + b h(t)^2,
\end{equation*}
then for all $t\geq 0$, $h(t) \leq 2a$.
\end{lemma}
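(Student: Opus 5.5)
The plan is a continuity (barrier) argument based on the quadratic $q(s) = a + b s^2 - s$. Since $4ab < 1$, the discriminant $1 - 4ab$ is positive and $q$ has two positive roots
\begin{equation*}
s_\pm = \frac{1 \pm \sqrt{1 - 4ab}}{2b}.
\end{equation*}
The hypothesis $h(t) \leq a + b h(t)^2$ is exactly $q(h(t)) \geq 0$. So for every $t$, $h(t)$ lies either in $[0, s_-]$ or in $[s_+, +\infty)$, and the interval $(s_-, s_+)$ is forbidden.

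First I will check that $2a$ lies in the forbidden region. A direct computation gives
\begin{equation*}
q(2a) = a + 4a^2 b - 2a = a(4ab - 1) < 0,
\end{equation*}
so $s_- < 2a < s_+$. The estimate $s_- \leq 2a$ can also be seen directly from $\sqrt{1 - 4ab} \geq 1 - 4ab$, valid because $0 < 1 - 4ab \leq 1$.

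Next comes the continuity argument. At $t = 0$ we have $h(0) \leq 2a < s_+$, and $h(0)$ cannot lie in the forbidden interval, so $h(0) \leq s_-$. Suppose some $t_1$ satisfies $h(t_1) > 2a$. Then also $h(t_1) > s_-$, and by the dichotomy $h(t_1) \geq s_+$. By the intermediate value theorem on $[0, t_1]$, the continuous map $h$ takes every value between $h(0) \leq s_-$ and $h(t_1) \geq s_+$, in particular the value $2a \in (s_-, s_+)$, where $q < 0$. This contradicts the hypothesis. Hence $h(t) \leq s_- \leq 2a$ for all $t \geq 0$.

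There is no real obstacle here. The only points needing care are the strictness of $a < \frac{1}{4b}$, which makes $q(2a) < 0$ strictly so that $2a$ is a genuinely forbidden value, and the use of continuity. Positivity of $h$ is not even needed beyond ensuring $h$ is real-valued.
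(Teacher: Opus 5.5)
Your proof is correct and is essentially the paper's argument: the paper also uses the sign of $s\mapsto a+bs^2-s$ (negative at $2a$ because $4ab<1$) together with continuity to show that $h$ cannot leave $[0,2a]$ through its right endpoint. You simply make the root locations $s_\pm$ and the intermediate value step explicit.
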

\begin{proof}
Studying the sign of $h \mapsto a + bh^2 - h$ on $[0,2a]$, and using continuity of the later map, it is direct to see that $h(t)$ can not leave $[0,2a]$ from its right boundary.
\end{proof}

\begin{proof}[Proof of Theorem \ref{t:main}.]
We first prove item \ref{i:th-1}. Existence of the solution locally in time is proven in Lemma \ref{l:local-existence}, and we now prove that $T$ is infinite. Lemma \ref{l:estimate-shape-function} provides $C_1 > 0$ such that for all $t\in [0,T]$ estimate \eqref{e:bootstrap-1} holds with $\rho = 1$. Setting $\delta = \frac{1}{4{C_1}^2}$ allows to apply Lemma \ref{l:bootstrap}, ensuring that $\varTheta_1(t) \leq 2 C_1 E_0$ for all $t\in [0,+\infty)$. This provides the estimate claimed in \ref{i:th-1}, as well as $T = +\infty$ from Lemma \ref{l:blow-up}.

We turn to \ref{i:th-2}. As a first step, we will construct $x_\infty$. From Lemma \ref{l:front}, we easily compute
\begin{equation*}
\lim_{x\to +\infty} \frac{\uu(x - x_\infty) - U_+}{\varOmega(x)} = \re^{(\kappa - \ri \xi_0) x_\infty} \, \vp(0).
\end{equation*}
Using the notation $b \vp(0) = \lim_{+\infty} \pi \frac{u_0 - \uu}{\varOmega}$, we deduce that there exists a unique $x_\infty \in [-\delta, \delta]$ such that 
\begin{equation*}
\pi \lim_{x\to +\infty} \frac{u_0(x) - \uu(x - x_\infty)}{\varOmega} =  (b + 1 - \re^{\kappa x_\infty}) \vp(0) = 0.
\end{equation*}
Indeed, $b > -1$ follows from the smallness assumption of $E_0$.

This choice of $x_\infty$ ensures that $v_0$ defined from \eqref{e:correction-ansatz} satisfies $\lim_{x \to +\infty} \pi  v_0(x) = 0$. 

We then proves that there exists a sub exponential weight $\rho$ with constants $(\delta, M)$ such that $\norm{\rho \pi v_0}_{L^\infty(\R)} < +\infty$. Following \cite{Garenaux-Rodrigues-25}, we define a recursive sequence by $x_0 = 0$ and for all $j \geq 0$
\begin{equation*}
x_{j + 1} \deq \inf\Set{x \geq x_j + 1 \, : \, \forall y > x, \hspace{0.5em} \pi v_0(y) \leq \frac{E_0}{2^{j+1}}},
\end{equation*}
and then let 
\begin{equation*}
\rho \deq \ind{(-\infty, 0]} + \sum_{j\geq 0} 2^{j + 1} \, \ind{(x_j, x_{j+1}]}.
\end{equation*}
These definitions ensure that $E_{0, \rho} = \norm{\rho \pi v_0}_{L^\infty} \leq 2 E_0$, and we now show that $\rho$ is sub-exponential with claimed constants. Remember from Lemma \ref{l:exemple-sub-exp-weights} that we only need to check \eqref{e:sub-exp-alternative-estimate}. We let $J: \R \to \N_{\geq 0}$ be the map defined by $J((x_j, x_{j + 1}]) = \Set{j+1}$. For $0 \leq y_1 \leq y_2$ we remark that 
\begin{equation*}
y_2 - y_1 \geq x_{J(y_2) - 1} - x_{J(y_1)} \geq J(y_2) - 1 - J(y_1),
\end{equation*}
the second estimate is easy to prove by induction. As a consequence, 
\begin{equation*}
\frac{\rho(y_2)}{\rho(y_1)} = \frac{2^{J(y_2)}}{2^{J(y_1)}} \leq 2 \re^{(y_2 - y_1)\ln(2)}, 
\end{equation*}
and $\rho$ is a sub-exponential weight with constants $(\ln(2), 2)$. Remark that the $\ln(2)$ constant can be replaced by $\ln(p)$ for any $p\in (1, +\infty)$ by a straightforward adaptation of the above. This step is completed.

From the above discussion, we see that a proof of \ref{i:th-3} will imply \ref{i:th-2}, and we conclude with a proof of the former. We repeat the same approach as for \ref{i:th-1}. We may shrink $\delta$ to $\kappa/\sigma$ so that Lemma \ref{l:bootstrap} applies. We further shrink $\delta$ to the value $\frac{1}{4 C_1 C_2 M}$, and \eqref{e:bootstrap-2} together with $\varTheta_1(t) \leq 2C_1 E_0$ imply
\begin{equation*}
\varTheta_\rho(t) \leq \frac{C_2 E_{0, \rho}}{1 - 2 C_1 C_2 M E_0}.
\end{equation*}
This is precisely \ref{i:th-3}, and the proof is complete.
\end{proof}

\section{Fourier and Evans modes}
We assumed above that only one parameter modulation was necessary to prove stability. For many interesting systems however, the element $0 \in \varSigma(\cL)$ has higher multiplicity. Some of its eigenmodes are associated to parameter invariance of \eqref{e:main}. Typical known examples are translation and rotation invariance. Other eigenmodes may arise from wave parameter variation: the background wave $\uu$ depend on additional parameters that appear in \eqref{e:main} itself. Known examples are wave number, wave speed and temporal frequency. 

Since our approach does not directly apply for wave parameter variation, we may assume in the next paragraphs that the $0$ eigenvalue is only associated to system invariants. When these correspond to essential spectrum curves, the above decomposition of the resolvent kernel should readily adapt, and lead to a stability result up to parameter modulation.

System invariants may also be linked to zeros of the Evans function at $0$:
\begin{equation}
\label{e:Evans-mode}
\phi(0, \cdot) \in \Span_j (\phi^-_j(0,\cdot)) \, \cap \, \Span_j (\phi^+_{-j}(0,\cdot)). 
\end{equation}
When this is the case, we distinguish between two scenario:
\begin{itemize}
\item If the Evans modes are localized
\begin{equation*}
\phi \text{ satisfies }\eqref{e:Evans-mode} 
\hspace{1.5em}
\implies
\hspace{1.5em}
\exists \kappa>0, 
\hspace{0.5em} \absolute{\phi(0, x)} \leq C\re^{-\kappa \absolute{x}}
\end{equation*}
then they do not change localization in a neighborhood of $0$, and it is possible to define spectral projections there. In that case, one can easily separate between Evans modes \eqref{e:Evans-mode} and Fourier modes 
\begin{equation}
\label{e:Fourier-mode}
\phi(0, x) \sim \re^{\ri \xi_0 x} \hspace{3em} x\to +\infty.
\end{equation}
For an Evans mode, the parameter modulation can be handled using the standard spectral projection approach \cite{Howard-Zumbrun-98}. For Fourier modes, the parameter dynamics is carried following Theorem \ref{t:main} proof.
\item Since $0$ lies at the essential spectrum border, one may argue that Evans modes may not be localized, but only bounded
\begin{equation*}
\phi \text{ satisfies }\eqref{e:Evans-mode} 
\hspace{1.5em}
\implies
\hspace{1.5em}
\absolute{\phi(0, x)} \leq C.
\end{equation*}
And discrimination between Evans and Fourier modes is not direct anymore.
\end{itemize}

\begin{theorem}
\label{t:Fourier-Evans}
Assume that $\varSigma_\textrm{ess}(\cL)$ does not intersect $\Set{\real{\lambda}>0}$. Assume that the marginal group velocities are directed towards the wave interface: With the same notation as in \eqref{e:marginal-lambda-expansion}, assume
\begin{equation}
\label{e:incoming-group-velocities}
\lambda^\pm_k(\ri \xi_0) = 0 
\hspace{2em}
\implies
\hspace{2em}
\pm \imag{ \partial_\nu \lambda^\pm_k (\ri \xi_0)} > 0. 
\end{equation}
Then Fourier and Evans modes are disjoint: If there exists a nonzero bounded solution to \eqref{e:pvp}, then it can not satisfy \eqref{e:Evans-mode} and \eqref{e:Fourier-mode} simultaneously.
\end{theorem}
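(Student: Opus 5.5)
The plan is to argue by contradiction, using the group velocity sign to decide which spatial eigenvalue carries the Fourier behaviour $\re^{\ri\xi_0 x}$. Suppose $\phi$ is a nonzero bounded solution of \eqref{e:pvp} at $\lambda=0$ that satisfies both \eqref{e:Evans-mode} and \eqref{e:Fourier-mode}. By \eqref{e:Evans-mode}, $\phi(0,\cdot)$ lies in $\Span_j(\phi^+_{-j}(0,\cdot))$. These are the solutions continued holomorphically from the region where they decay at $+\infty$, i.e.\ from spatial eigenvalues with $\real{\nup{-j}(\lambda)}<0$ for $\lambda$ to the right of the essential spectrum. By \eqref{e:Fourier-mode}, the component of $\phi$ at $+\infty$ carries the purely imaginary spatial eigenvalue $\nu_0=\ri\xi_0$, so $0=\lambda^+_k(\ri\xi_0)$ for some $k$. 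The aim is to show that this root belongs to the \emph{unstable} group $\{\nup{j}\}_{j\geq1}$ rather than the stable group. Then the Fourier component cannot appear in the decomposition on the $\phi^+_{-j}$, and this gives the contradiction.

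First, the labelling convention. Take $\lambda>0$ real and large enough to lie to the right of $\varSigma_\textrm{ess}(\cL)$; this is possible since the essential spectrum does not meet $\{\real\lambda>0\}$. There no spatial eigenvalue is purely imaginary, and the $\nup{-j}$ are those with negative real part. Continue along $\lambda\downarrow0$. Since $\varSigma_\textrm{ess}(\cL)\cap\{\real\lambda>0\}=\emptyset$, no spatial eigenvalue crosses $\ri\R$ for $\real\lambda>0$, so the splitting persists up to $\lambda=0$. At $\lambda=0$ only the roots $\nu$ with $\lambda^\pm_k(\nu)=0$, $\nu\in\ri\R$, touch the axis.

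Next, the key local computation. Near $(\lambda,\nu)=(0,\ri\xi_0)$, invert the branch $\lambda=\lambda^+_k(\nu)$ by the implicit function theorem. This needs $\partial_\nu\lambda_k^+(\ri\xi_0)\neq0$, which is ensured by \eqref{e:incoming-group-velocities}. It gives $\nu(\lambda)=\ri\xi_0+\lambda/\partial_\nu\lambda^+_k(\ri\xi_0)+\bigo(\lambda^2)$. Writing $\partial_\nu\lambda^+_k=a+\ri b$, for small real $\lambda>0$ we get
\[
\real\nu(\lambda)=\lambda\,\frac{a}{a^2+b^2}+\bigo(\lambda^2).
\]
Here I must identify the relevant derivative with the group velocity. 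The assumption $\real\lambda^+_k(\ri\xi)\le0$, with equality at $\xi_0$, forces $\partial_\xi\real\lambda^+_k(\ri\xi)=0$ there. Hence $\partial_\nu\lambda^+_k(\ri\xi_0)$ is real, up to the paper's convention relating $\partial_\nu$ and the group velocity $\imag$ in \eqref{e:incoming-group-velocities}. So the sign condition at $+$ gives $\real\nu(\lambda)>0$: the root is unstable and belongs to the $\nup{j}$, $j\geq1$. Symmetrically, at $-\infty$ the condition gives $\real\nu(\lambda)<0$, so that root is stable, belongs to the $\num{-j}$, and cannot be used by $\Span_j\phi^-_j$.

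Finally, conclude the argument. Decompose $\phi(0,\cdot)=\sum_j c_j\phi^+_{-j}(0,\cdot)$. By Lemma \ref{l:ODE-behavior}, each $\phi^+_{-j}(0,x)$ behaves at $+\infty$ like $\re^{A^+(0)x}V^+_{-j}(0)$ plus an exponentially smaller correction. Here $V^+_{-j}$ spans the spectral subspace of the $\nup{-j}$, and by the previous step that subspace excludes $\ri\xi_0$. Hence $\phi(0,x)$ has no $\re^{\ri\xi_0x}$ leading term: it either decays or has leading rate different from $\ri\xi_0$. This contradicts \eqref{e:Fourier-mode}.

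The main obstacle I expect is the degenerate situation in the last two steps. Several roots $\nu$ may touch $\ri\R$ at $\lambda=0$, and roots may collide with Jordan blocks, so the spectral subspaces are only defined via grouped projections. I would treat it with the holomorphic grouped projections used after Lemma \ref{l:ODE-behavior}. I would also check that \eqref{e:incoming-group-velocities} applied to every touching root gives the same sign, so that the whole group moves into one half-plane and no mixing occurs.
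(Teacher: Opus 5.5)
Your proposal is correct and follows essentially the paper's argument. Both argue by contradiction: the incoming group velocity forces the spatial eigenvalue $\ri\xi_0$ into $\{\real\nu>0\}$ for $\real\lambda>0$, so it belongs to the unstable group and cannot appear in $\Span_j\phi^+_{-j}(0,\cdot)$.

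The difference is only technical. You get the sign from a first-order implicit-function expansion along real $\lambda>0$ and treat all touching roots at once. You also read the hypothesis correctly as $\partial_\nu\lambda^+_k(\ri\xi_0)>0$, i.e.\ $\imag\partial_\xi\lambda^+_k>0$, since the literal $\imag\partial_\nu\lambda^+_k$ vanishes by tangency. The paper instead uses conformality of $\nu\mapsto\lambda^+_{k_0}(\nu)$ and handles several roots one at a time, starting from the one with maximal group velocity.
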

\begin{proof}
We discuss the case of a marginal spectrum caused by $\cL^+$, the discussion adapts easily if it comes from $\cL^-$. Assume by contradiction that a bounded $\phi$ solves \eqref{e:pvp}, and satisfy both \eqref{e:Evans-mode} - \eqref{e:Fourier-mode}.
As a consequence, there exists $j_0$ such that  $\nu^+_{j_0}(0) = \ri \xi_0$. The definition of the dispersion relation provides a $k_0$ such that $\lambda^+_{k_0}(\ri \xi_0) = 0$, and thus $\imag{ \partial_\nu \lambda^+_{k_0} (\ri \xi_0)} > 0$. If they are several such $\phi$ and $k_0$, we may focus first on the couple associated to the maximal value for $\imag{ \partial_\nu \lambda^+_{k_0} (\ri \xi_0)}$, and return to the others later.

From positiveness of $\imag{ \partial_\nu \lambda^+_{k_0} (\ri \xi_0)}$, we deduce two facts. First, $\nu \mapsto \lambda^+_{k_0}(\nu)$ is locally invertible with smooth inverse. Second, the curve 
\begin{equation}
\label{e:spectral-curve}
\Set{\lambda^+_{k_0}(\ri \xi) : \absolute{\xi - \xi_0} \leq \delta}
\end{equation}
is tangent to the imaginary axis, and directed from $\Set{\imag \lambda < 0}$ to $\Set{\imag \lambda > 0}$ when $\xi$ pass increasingly through $\xi_0$. To use this geometric fact, we recall that $\nu \mapsto \lambda^+_{k_0}(\nu)$ and its inverse conserve angles due to holomorphicity. Thus the equivalence 
\begin{equation*}
\real{\nu^+_{j_0}(\lambda)} > 0 
\hspace{2em}
\iff
\hspace{2em}
\lambda \notin \varSigma_\textrm{ess}(\cL)  
\end{equation*}
holds in a neighborhood of $(\lambda, \nu) = (0, \ri \xi_0)$.\footnote{Recall that $\imag{ \partial_\nu \lambda^+_{k_0} (\ri \xi_0)}$ is maximal.} The sign $\real{\nu^+_{j_0}(\lambda)} > 0$ now holds true for all small $\lambda$ with positive real part, and can be extended to the entire half plane $\Set{\real{\lambda} > 0}$ using the stability of the essential spectrum. This is a contradiction with \eqref{e:Evans-mode}. 

When studying other couples $(\phi, k_0)$, the right hand side of the above equivalence must be replaced with the looser statement that $\lambda$ is at the right of the curve \eqref{e:spectral-curve}. With this change, the proof holds.
\end{proof}

With the extra assumption \eqref{e:incoming-group-velocities}, it is again possible to distinguish between Fourier and Evans modes, and the parameter modulation procedure goes as discussed above. Let us stress that when \eqref{e:incoming-group-velocities} does not hold, even Theorem \ref{t:main} proof cease to apply. Indeed, the condition $\alpha_1 > 0$ in \eqref{e:marginal-lambda-expansion} is crucial for the Gaussian kernel estimate \eqref{e:estimate-gi-rho}.

\appendix

\section{Shifting integration paths}
\label{s:appendix}
We show here how deformation of integration path endpoints is possible.

For every $r \in \R$ and $\gamma \in [\frac\pi2, \pi]$, let
\begin{equation*}
\varGamma(r, \gamma) \deq \Set{r + \cos(\gamma) \absolute{\xi} + i \sin(\gamma) \xi : \xi \in \R}.
\end{equation*}
The vertical path is denoted
\begin{equation*}
\varGamma(r) \deq \varGamma\left(r, \frac{\pi}{2}\right).
\end{equation*}
\begin{assumption}
\label{a:integration-path-shift}
Let $\Omega \subset \C$. Assume there exists $\alpha>0$ and a holomorphic map $F : \Omega \to \C$ satisfying 
\begin{equation*}
\absolute{F(\lambda)} \leq C \absolute{\lambda}^\alpha, 
\hspace{4em} 
\lambda \in \Omega,
\end{equation*}
\end{assumption}
In the following, we fix $t>0$, and we write
\begin{equation*}
I(r, \gamma) \deq \int_{\varGamma(r, \gamma)} e^{\lambda t} F(\lambda) \d \lambda, 
\hspace{4em} 
r \in \R, 
\hspace{1em} 
\gamma \in \left[\frac{\pi}{2}, \pi\right],
\end{equation*}
together with $I(r) \deq I\left(r, \frac{\pi}{2}\right)$.

\begin{lemma}
Assume Assumption \ref{a:integration-path-shift} holds. Further assume there exists $\gamma \in [\frac{\pi}{2}, \pi]$ and $(r_1, r_2) \in \R^2$ such that
\begin{equation*}
\cup_{r \in [r_1, r_2]} \varGamma(r, \gamma) \subset \Omega.
\end{equation*}
If $I(r_1, \gamma)$ converges, then $I(r_1, \gamma) = I(r_2, \gamma)$.
\end{lemma}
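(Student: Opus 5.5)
The plan is to apply Cauchy's theorem on a truncated region between the two sectorial paths and to show that the two connecting segments contribute nothing in the limit. Assume without loss of generality $r_1 < r_2$. For $R>0$, let $\varGamma_R(r,\gamma)$ denote the portion of $\varGamma(r,\gamma)$ with $\absolute{\xi}\le R$. Let $D_R$ be the closed region bounded by $\varGamma_R(r_1,\gamma)$, $\varGamma_R(r_2,\gamma)$ and the two horizontal-type segments
\begin{equation*}
\sigma_R^\pm \deq \Set{r + \cos(\gamma) R \pm \ri \sin(\gamma) R : r \in [r_1, r_2]}.
\end{equation*}
By hypothesis, $D_R \subset \cup_{r\in[r_1,r_2]}\varGamma(r,\gamma) \subset \Omega$, and this union is simply connected: it is a translate-swept strip. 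Since $F$ is holomorphic, Cauchy's theorem gives
\begin{equation*}
\int_{\varGamma_R(r_1,\gamma)} = \int_{\varGamma_R(r_2,\gamma)} + \int_{\sigma_R^+} - \int_{\sigma_R^-},
\end{equation*}
with the orientations chosen accordingly. The integrand throughout is $e^{\lambda t}F(\lambda)$.

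The key step is to bound the segment integrals. On $\sigma_R^\pm$ we have $\absolute{\lambda}\le C(1+R)$, so Assumption~\ref{a:integration-path-shift} gives $\absolute{F}\le C(1+R)^\alpha$. We also have $\absolute{e^{\lambda t}} = e^{t(r+\cos(\gamma)R)} \le e^{t r_2} e^{t\cos(\gamma) R}$, and the segments have length $r_2 - r_1$. Hence
\begin{equation*}
\Absolute{\int_{\sigma_R^\pm} e^{\lambda t}F(\lambda)\,\d\lambda} \le C (r_2-r_1) e^{tr_2}(1+R)^\alpha e^{t\cos(\gamma)R}.
\end{equation*}
When $\gamma\in(\frac\pi2,\pi]$, we have $\cos\gamma<0$ and $t>0$, so this bound tends to $0$ as $R\to\infty$.

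The main obstacle is the vertical case $\gamma=\frac\pi2$, where the exponential factor gives no decay and the polynomial bound on $F$ is useless. In that case I would not use the crude bound. Instead I would exploit the convergence of $I(r_1)$ through averaging. Define $J(R)\deq \int_{\sigma_R^+}-\int_{\sigma_R^-}$, which equals the difference of the truncated integrals along the two paths. I would integrate the Cauchy identity in $R$ over $[R_0,2R_0]$ and use Fubini: the averaged segment contribution becomes a double integral of $e^{\lambda t}F$ over the two rectangles $[r_1,r_2]\times \pm[R_0,2R_0]$. This is not obviously small either. A cleaner route, if the paper's applications allow it, is to note that in the intended use $\gamma>\frac\pi2$ (sectorial paths). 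So I would first prove the case $\gamma>\frac\pi2$ as above. For $\gamma=\frac\pi2$, I would then assume additionally that $F$ decays along vertical lines uniformly in $r\in[r_1,r_2]$, which holds for resolvent kernels with $\absolute{F}\lesssim\absolute{\lambda}^{-1}$; alternatively I would rotate slightly to $\gamma>\frac\pi2$ using a sector lemma.

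Finally, I pass to the limit $R\to\infty$. Since $I(r_1,\gamma)$ converges, the truncated integrals along $\varGamma_R(r_1,\gamma)$ converge, and the segment terms vanish. Therefore the truncated integrals along $\varGamma_R(r_2,\gamma)$ also converge, which proves both that $I(r_2,\gamma)$ converges and that $I(r_1,\gamma)=I(r_2,\gamma)$.
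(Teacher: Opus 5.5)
Your argument is complete for $\gamma\in(\frac{\pi}{2},\pi]$. There the hypothesis that $I(r_1,\gamma)$ converges is in fact automatic, since $\absolute{\re^{\lambda t}}=\re^{t(r+\cos(\gamma)\absolute{\xi})}$ beats any polynomial growth of $F$. For $\gamma=\frac{\pi}{2}$, however, you do not prove the statement: adding a decay hypothesis on $F$, or rotating to a sector, changes the lemma. That is a genuine gap.

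It cannot be closed under the assumption as printed ($\alpha>0$), because the vertical case is then false. Take $t=1$, $\Omega=\Set{\lambda\in\C : -1<\real\lambda<2}$ and $F(\lambda)=\lambda\re^{-\lambda}$. Then $\absolute{F(\lambda)}\le \re\,\absolute{\lambda}$ on $\Omega$, so $\alpha=1$. With $r_1=0$ and $r_2=1$, the truncated integrals are $\int_{-R}^{R}(\ri\xi)\,\ri\,\d\xi=0$ and $\int_{-R}^{R}(1+\ri\xi)\,\ri\,\d\xi=2\ri R$. So $I(0)$ converges in the symmetric sense, which is what both your truncation and the paper's ball truncation compute, while $I(1)$ diverges. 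Your remark that the polynomial bound on $F$ is useless in this case is exactly right.

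The paper's proof has the same hole. It truncates with balls $B(\frac{r_1+r_2}{2},R)$ and joins the two paths by arcs of length at most $2\pi\absolute{r_1-r_2}$ at distance $\gtrsim R$ from the origin. It then bounds the arc contribution by $C\absolute{r_1-r_2}\re^{\max(r_1,r_2)t}R^\alpha$, throwing away the factor $\re^{t\cos(\gamma)R}$. That bound tends to zero only if $\alpha<0$, which is evidently the intended sign: the proof of the next lemma says ``Since $\alpha<0$''.

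With $\alpha<0$, your own segment estimate settles the vertical case immediately. You only need to bound $F$ on $\sigma_R^\pm$ using the lower bound $\absolute{\lambda}\ge R$, which gives $\absolute{F}\le CR^\alpha$. The upper bound $C(1+R)^\alpha$ you wrote goes the wrong way when $\alpha<0$. No averaging or rotation is then needed.

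The rotation idea would not have helped in any case. It requires $\Omega$ to contain the sectors between $\varGamma(r_i,\frac\pi2)$ and $\varGamma(r_i,\gamma')$, which is not assumed. Moreover, the part of the connecting arc near the imaginary direction is only bounded by $\mathcal{O}(R^{\alpha})$, so it too needs $\alpha<0$.

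In short, the two routes trade off differently. The paper's argument is uniform in $\gamma$ but relies entirely on decay of $F$. Yours keeps $\re^{t\cos(\gamma)R}$, so it covers $\gamma>\frac{\pi}{2}$ under mere polynomial growth. With $\alpha<0$, both give the full lemma.
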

\begin{proof} 
For every $R>0$, let 
\begin{equation*}
\varGamma_R(r, \gamma) \deq B\left(\frac{r_1 + r_2}{2}, R\right) \cap \varGamma(r, \gamma).
\end{equation*}
Propagating this subscript notation to stand for the intersection with a radius $R$ ball, and due to $F$ being holomorphic,
\begin{equation*}
I_R(r_1, \gamma) - I_R(r_2, \gamma) = \int_{\mathcal{C}(R, \gamma, r_1, r_2)} e^{\lambda t}F(\lambda) \d \lambda,
\end{equation*}
where $\mathcal{C}$ is a piece-wise integration path of length at most $2\pi \absolute{r_1 - r_2}$, and located outside of $B(0, R)$. In particular,
\begin{equation*}
\absolute{I_R(r_1, \gamma) - I_R(r_2, \gamma)} \leq C \absolute{r_1 - r_2} e^{\max(r_1, r_2) t} R^\alpha
\end{equation*}
Which guaranties that $I_R(r_2, \gamma)$ converges when $R\to +\infty$, and that
\begin{equation*}
0 = \lim_{R \to +\infty} I_R(r_1, \gamma) - I_R(r_2, \gamma) = I(r_1, \gamma) - I(r_2, \gamma).
\end{equation*}
\end{proof}
\begin{lemma}
Assume Assumption \ref{a:integration-path-shift} hods. Further assume there exists $r \leq 0$ and $\gamma \in (\frac{\pi}{2}, \pi]$ such that \begin{equation*}
\cup_{\varphi \in [\frac{\pi}{2}, \gamma]}\varGamma(r, \varphi) \subset \Omega
\end{equation*}
If $I(r)$ converges, then $I(r) = I(r, \gamma)$.
\end{lemma}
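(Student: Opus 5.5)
The plan is to mimic the proof of the previous lemma, but rotate the rays around the vertex $r$ instead of translating them. For $R>0$, consider the truncated paths $\varGamma_R(r) = \varGamma(r)\cap B(r,R)$ and $\varGamma_R(r,\gamma) = \varGamma(r,\gamma)\cap B(r,R)$. Both are made of two segments issued from the common vertex $r$. The first consists of the vertical segments $r \pm \ri[0,R]$. The second consists of the rotated segments $r + R'\re^{\pm\ri\gamma}$ with $R'\in[0,R]$. Since the closed sector swept by $\varGamma(r,\varphi)$, $\varphi \in [\frac\pi2,\gamma]$, lies in $\Omega$ by assumption and $\lambda\mapsto \re^{\lambda t}F(\lambda)$ is holomorphic there, Cauchy's theorem gives
\begin{equation*}
I_R(r) - I_R(r,\gamma) = \int_{\mathcal{A}_R^+} \re^{\lambda t} F(\lambda)\,\d\lambda - \int_{\mathcal{A}_R^-} \re^{\lambda t} F(\lambda)\,\d\lambda,
\end{equation*}
up to orientation conventions. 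Here $\mathcal{A}_R^\pm = \Set{r + R\re^{\pm\ri\varphi} : \varphi\in[\frac\pi2,\gamma]}$ are the two circular arcs closing the contours in the upper and lower half planes.

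The key step is to show that the arc integrals vanish as $R\to+\infty$. On $\mathcal{A}_R^\pm$ we have $\real{\lambda} = r + R\cos\varphi \le r\le 0$, so $\absolute{\re^{\lambda t}} = \re^{rt}\re^{Rt\cos\varphi}$. By Assumption \ref{a:integration-path-shift}, $\absolute{F(\lambda)}\le C(\absolute{r}+R)^\alpha$. Substituting $\varphi = \frac\pi2+\psi$ and using Jordan's inequality $\sin\psi \ge \frac{2}{\pi}\psi$, the arc contribution is bounded by
\begin{equation*}
C(\absolute{r}+R)^{\alpha}\, R\int_0^{\gamma-\frac\pi2} \re^{-Rt\sin\psi}\,\d\psi \le C(\absolute{r}+R)^\alpha R\int_0^{\frac\pi2}\re^{-\frac{2}{\pi}Rt\psi}\,\d\psi \le \frac{C(\absolute{r}+R)^\alpha}{t},
\end{equation*}
because $\gamma - \frac{\pi}{2} \le \frac{\pi}{2}$. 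This is the main obstacle. Jordan's lemma only kills the factor $R$, so it does not by itself beat the growth $R^\alpha$ for $\alpha>0$.

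I would handle this by splitting the angular integral. On $\psi\in[\psi_R, \gamma-\frac\pi2]$, with $\psi_R = R^{-1/2}$, the integrand is at most $\re^{-\frac{2}{\pi}t R^{1/2}}$, which kills any polynomial factor. On the small portion $\psi\in[0,\psi_R]$ the arc is close to the vertical line, and the decay must come from the convergence of $I(r)$. The difficulty is that convergence of $I(r)$ does not by itself control $F$ pointwise near $r\pm\ri R$. I therefore expect to have to exploit the concrete structure of the kernels $F$ used in the paper, for instance the bound $\absolute{K(\lambda,x,y)}\le C\re^{-r(\lambda)\absolute{x-y}}$ from Proposition \ref{p:large-lambda}, giving exponential decay when $x\neq y$. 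Alternatively, I would interpret $I(r,\gamma)$ as a limit of symmetric truncations and define both integrals through the same $R\to\infty$ limit.

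Once the arc contributions tend to $0$, letting $R\to+\infty$ in the Cauchy identity shows that $I_R(r,\gamma)$ converges and equals $\lim_R I_R(r) = I(r)$, which concludes the proof.
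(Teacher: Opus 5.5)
You have the right skeleton (Cauchy's theorem on the truncated sector, then show the arc integrals vanish), and your diagnosis is correct. Jordan's inequality only yields $C(\absolute{r}+R)^{\alpha}/t$, so for $\alpha>0$ the near-vertical piece of the arc, $\psi\in[0,\psi_R]$, is not controlled. The proposal stops exactly there, and neither remedy closes the gap. The kernel bound $\absolute{K}\le C\re^{-r(\lambda)\absolute{x-y}}$ gives no decay on the diagonal $x=y$, and in any case it proves something about one particular $F$, not the lemma. Redefining the integrals through symmetric truncations does not touch the arc term, which is the only issue, since the integral along $\varGamma(r,\gamma)$ converges absolutely. As it stands, the proposal is not a proof of the statement.

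The paper does not get past this point either. It splits the arc at $\delta=R^{-1/3}$ as you split at $\psi_R$, and kills the outer part with $\re^{-R^{2/3}t}$. It then disposes of the near-vertical part by invoking $\alpha<0$, i.e.\ decay of $F$, despite the $\alpha>0$ written in Assumption~\ref{a:integration-path-shift}. Its intermediate bound $R^{1+\alpha}\delta/R^{2}$ is also too optimistic; the correct one is your $R^{\alpha}/t$. Under that decay hypothesis, your single Jordan estimate is already a complete proof, once you bound $\absolute{F}\le C(R-\absolute{r})^{\alpha}$ on the arc using $\absolute{\lambda}\ge R-\absolute{r}$.

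If you want the statement with $\alpha>0$, the missing idea is not a finer pointwise estimate but Phragm\'en--Lindel\"of. Work on the upper sector $S^+=\Set{r+\rho\re^{\ri\varphi}:\rho\ge0,\ \varphi\in[\frac\pi2,\gamma]}$, whose opening is at most $\frac\pi2$, with the primitive $P(\lambda)=\int_{[r,\lambda]}\re^{\mu t}F(\mu)\,\d\mu$. This $P$ is holomorphic and grows at most polynomially, because $\real{\mu}\le r\le 0$ there. It has limits along both bounding rays: along the vertical one by convergence of the upper half of $I(r)$, and along the ray of angle $\gamma$ by absolute convergence. Phragm\'en--Lindel\"of makes $P$ bounded on $S^+$, and Lindel\"of's theorem then forces the two limits to coincide. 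Doing the same on the lower sector gives $I(r)=I(r,\gamma)$, provided each half of $I(r)$ converges separately. The paper tacitly assumes this when it treats only the upper arc.
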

\begin{proof}
As above, let
\begin{equation*}
\varGamma_R(r, \gamma) \deq B\left(r, R\right) \cap \varGamma(r, \gamma), 
\hspace{4em} R > 0. 	
\end{equation*}
and
\begin{equation*}
\mathcal{C}_R(r, \gamma) = \Set{r + R e^{i \xi} : \absolute{\xi} \in \left[\frac{\pi}{2}, \gamma\right]}, 
\hspace{4em} R > 0.
\end{equation*}
Since $F$ is holomorphic,
\begin{equation*}
I_R(r) - I_R(r, \gamma) = \int_{\mathcal{C}_R(r, \gamma)} e^{\lambda t} F(\lambda)  \d \lambda.
\end{equation*}
To keep notations simple, we focus in the following on $\mathcal{C}_R(r, \gamma) \cap \Set{\imag(\lambda) \geq 0}$ instead of $\mathcal{C}_R$. Let $\delta = R^{-1/3}$. We decompose $\mathcal{C}_R$ as the union of the following two integration paths:
\begin{equation*}
\mathcal{C}_1 = \Set{r + R e^{i \xi} : \absolute{\xi} \in \left[\frac{\pi}{2} + \delta, \gamma \right]}.
\end{equation*}
\begin{equation*}
\mathcal{C}_2 = \Set{r + R e^{i \xi} : \absolute{\xi} \in \left[\frac{\pi}{2}, \frac{\pi}{2} + \delta \right]}, 
\end{equation*}
On the first hand, $\mathcal{C}_1$ is strictly away from the imaginary axis, thus exponentially decay with $R$:
\begin{equation*}
\Absolute{\int_{\mathcal{C}_1} e^{\lambda t} f(\lambda) \d \lambda} \leq C R^\alpha \int_{\frac{\pi}{2} + \delta}^\gamma e^{R\cos(\xi) t} R \d \xi \leq C R^{1+\alpha} e^{R \cos\left(\frac{\pi}{2} + \delta\right) t} \frac{\pi}{2}.
\end{equation*}
More precisely, since $\cos\left(\frac{\pi}{2} + x \right) = - \sin(x) = - x + g(x)$, where the map $g$ satisfies
\begin{equation*}
0 \leq g(x) \leq C x^3, 
\hspace{4em}
x \in [-\varepsilon_0, \varepsilon_0],
\end{equation*}
the previous choice for $\delta$ ensures 
\begin{equation*}
\Absolute{\int_{\mathcal{C}_1} e^{\lambda t} f(\lambda) \d \lambda} \leq C R^{1+\alpha} e^{- R \delta t + R g(\delta) t} \leq C R^{1 + \alpha} e^{-R^{2/3} t + C t}.
\end{equation*}
The right hand side converges to $0$ when $R \to +\infty$. 
On the other hand, the second integration path length goes to $0$:
\begin{equation*}
\Absolute{\int_{\mathcal{C}_2} e^{\lambda t} F(\lambda) \d \lambda} \leq C R^{1 + \alpha} \int_{0}^\delta e^{-R\sin(\xi) t} \d \xi \leq C R^{1+\alpha} e^{R g(\delta) t} \int_0^\delta e^{-R \xi t} \d \xi.
\end{equation*}
As above, $\delta$ expression ensures $e^{R g(\delta)t} \leq C$, so that the change of variable $R \xi = \zeta$ leads to
\begin{equation*}
\Absolute{\int_{\mathcal{C}_2} e^{\lambda t} F(\lambda) \d \lambda} \leq C R^{1+\alpha} \frac{\delta}{R^2} \leq C R^{\alpha - 4/3}.
\end{equation*}
Since $\alpha < 0$, the right hand side converges to $0$ when $R \to +\infty$. Thus, $I_R(r) - I_R(r, \gamma)$ converges to $0$ when $R\to +\infty$, concluding the proof.
\end{proof}

\bibliography{biblio}

\end{document}